\newcommand{\R}{\mathbb{R}}
\newcommand{\N}{\mathbb{N}}
\newcommand{\E}{\mathbb{E}}
\newcommand{\PP}{\mathbb{P}}
\newcommand{\dist}{\operatorname{dist}}
\newcommand{\diam}{\operatorname{diam}}
\newtheorem{thm}{Theorem}[section]
\newtheorem{teo}{Theorem}[section]
\newtheorem{lem}[thm]{Lemma}
\newtheorem{ass}{Assumption}
\theoremstyle{remark}
\newtheorem{rem}{Remark}[section]
\newcommand{\be}{\begin{equation}}
\newcommand{\ee}{\end{equation}}
\newcommand{\bp}{\begin{proof}}
\newcommand{\ep}{\end{proof}}
\newcommand{\bel}{\begin{equation}\label}
\newcommand{\eeq}{\end{equation}}
\newcommand{\bea}{\begin{eqnarray}}
\newcommand{\eea}{\end{eqnarray}}
\newcommand{\bee}{\begin{eqnarray*}}
\newcommand{\eee}{\end{eqnarray*}}
\newcommand{\ben}{\begin{enumerate}}
\newcommand{\een}{\end{enumerate}}
\newcommand{\vertiii}[1]{{\left\vert\kern-0.25ex\left\vert\kern-0.25ex\left\vert #1 
\right\vert\kern-0.25ex\right\vert\kern-0.25ex\right\vert}}
\providecommand{\abs}[1]{\left|#1 \right|}
\providecommand{\norm}[1]{\left\| #1 \right\|}
\date{}
\title{A numerical approach for the fractional Laplacian via deep neural networks}
\author{Nicol\'as Valenzuela}
\address{Departamento de Ingenier\'ia Matem\'atica DIM, and CMM UMI 2807-CNRS, Universidad de Chile, Beauchef 851 Torre Norte Piso 5, Santiago Chile}
\email{nvalenzuela@dim.uchile.cl}
\thanks{N.V. is partially supported by Fondecyt no. 1191412, no. 1231250, Fondo Basal FB210005, ANID Fellowship 21231021, ANID Exploración 13220060 and {\color{black}Latin America Google PhD Fellowship Program}.}
\date{\today}
\subjclass[2000]{Primary: 35R11, Secondary: 62M45, 68T07}
\numberwithin{equation}{section}
\begin{document}

\maketitle \markboth{Nicol\'as Valenzuela}{A numerical approach for the fractional Laplacian via deep neural networks}

\begin{abstract}

We consider the fractional elliptic problem with Dirichlet boundary conditions on a bounded and convex domain $D$ of $\R^d$, with $d \geq 2$. In this paper, we perform a stochastic gradient descent algorithm that approximates the solution of the fractional problem via Deep Neural Networks. Additionally, we provide four numerical examples to test the efficiency of the algorithm, and each example will be studied for many values of $\alpha \in (1,2)$ and $d \geq 2$.

\end{abstract}

\section{Introduction}\label{Sec:1}
{\color{black}\subsection{Motivation}
Deep Learning (DL) techniques \cite{Elb} have become principal actors on the numerical approximation of partial differential equation (PDE) solutions. Among them, linear PDEs \cite{Grohs, Nico}, semi-linear PDEs \cite{E, Hutz, Hutz2} and fully nonlinear PDEs \cite{Lye, Lye2} have been the main objectives of study. In particular, there exist numerical \cite{Berner1, Han} and theoretical \cite{Grohs, Hutz, Nico} results that suggest a good performance of Deep Neural Networks (DNNs) in the approximation of some PDE solutions. Previous references imply that DNNs can approximate (in suitable conditions) PDE solutions with arbitrary accuracy. Moreover, the requested DNN do not suffer of the so-called \emph{curse of dimensionality}, that is to say, the total number of parameters used to describe the DNN is at most polynomial on the dimension of the problem and on the reciprocal of the accuracy.}

\medskip
On the other hand, the fractional Laplacian has attracted considerable interest in past decades due to its vital role in some scientific disciplines such as physics, engineering and mathematics. This operator describes phenomena that exhibit anomalous behavior: {\color{black}is} used to describe some diffusion and propagation processes. Starting from the foundational work by Caffarelli and Silvestre \cite{CS}, the study of fractional problems has always required a great amount of detail and very technical mathematics. The reader can consult the monographs by \cite{Acosta,Bonito,Gulian,AK1,Lischke1}.

\medskip
{\color{black}In addition, Guilian, Raissi, Perdikaris and Karniadakis proposed in 2019 a machine learning approach for fractional differential equations \cite{Raissi}. This approach comes from the fact that for linear differential equation there exists a framework where the linear operator can be learned from Gaussian processes \cite{Raissi2}. The framework uses given data on the solution $u$ and the source term $f$, assuming that both functions are Gaussian processes with some parameters that depend on the linear operator. Once the operator is learned, it is possible to find the Gaussian processes that describe $u$ and $f$. Finally, the framework proposed in \cite{Raissi} is a generalization of the linear case.}


\medskip
In this paper we develop an  {\color{black} alternative} algorithm for approximating solutions of the high-dimensional fractional elliptic PDE with Dirichlet boundary condition over a bounded, convex domain $D \subset \R^d$, using deep learning techniques. To do so, we treat with {\color{black}the unique continuous solution of the fractional elliptic PDE. In 2018, Kyprianou, Osojnik and Shardlow proved in \cite{AK1} that the previously mentioned solution can be written in terms of the expected value of some stochastic processes. To obtain this result the authors assume hypotheses on the source and the boundary term, in addition with the setting on the domain $D$. With this and a relation between the fractional Laplacian and stochastic processes called isotropic $\alpha$-stable processes, the stochastic representation can be found.}\\

\medskip
{\color{black}That stochastic representation allows to give a Monte Carlo approximation of the solution for every point in the domain $D$. Each term of the Monte Carlo operator can in turn be approximated by a deep neural network. At the end, the average of the obtained DNNs is indeed an approximation of the solution of the PDE, and moreover, it is also a DNN.}


\medskip
The advantage of this methodology is that the deep neural network found can be evaluated in each point on $\R^d$, and there is not necessary to generate a Monte Carlo iteration for every point on $\R^d$, but in some of them for the training of the deep neural network. {\color{black} Furthermore, no data on the solution is needed in this approach since all the training data is generated in the algorithm, not so in the case of \cite{Raissi}.}

{\color{black}\subsection{Recent works on DL techniques} In the recent years different DL techniques involving DNNs and PDEs have been studied. Some of them are: Monte Carlo methods \cite{Grohs, Nico}, Multilevel Picard (MLP) iterations \cite{Beck, Hutz, Hutz2}, Forward Backward Stochastic Differential Equations (FBSDE) \cite{Castro, Han, Hure}, DeepONets \cite{CalderonDO,Chen,Lu} and Physic Informed Neural Networks (PINNs) \cite{Mishra, Mishra2}. Monte Carlo methods are used to approximate linear PDE solutions. The principal key is the \emph{Feynman-Kac representation} for some PDE solutions, where the solution can be written as the expectation of a random variable (or a stochastic process), and it is approximated by means of such random variable. Finally, each term of the mean can be approximated by a suitable DNN. 

\medskip

On other hand, MLP iterations are used in semilinear PDE solutions. Here, the Feynman-Kac representation can be seen as a functional with the PDE solution as the unique fixed point of such functional. Then, via \emph{Picard iterations}, the obtained terms are approximated by DNNs. Also, the PDEs are usually connected with stochastic differential equations by the \emph{Itô Formula}. The FBSDEs use this relation to obtain stochastic processes which are related with the PDE solution. Then, with a deep learning based algorithm is possible to have an approximation of the PDE solution. 

\medskip

Furthermore, there is also the DeepONets, which are approximations of \emph{infinite dimensional operators}, for instance, the operator that given a Dirichlet boundary condition, return the solution of a fixed PDE. The idea is to reduce the infinite dimensional problem to a finite dimensional space. Then, make a DNN between to finite dimensional spaces, and finally extend the DNN to the image set of the operator, which is infinite dimensional. From this technique, one of the most recent works is the approximation of the direct and inverse Calderón's mapping via DeepONets, given in \cite{CalderonDO}. Finally, but not less important, there are the PINNs, approximations via DNNƒs whose have in the loss function some constraints that comes from the nature of the PDE (such as the differential equation, boundary conditions, among others).}




\subsection*{Organization of this work} 
This work is organized as follows. In Section \ref{Sec:2} we introduce the necessary preliminary definitions and results for the understanding of the numerical problem. Section \ref{Sec:3} is devoted to the modeling of the numerical problem, such as the simulation of {\color{black}the involved} random variables, the generation of the training set, the use of the SGD algorithm to find an optimal deep neural network and the errors used to evaluate the accuracy of the algorithm. Section \ref{Sec:4} deals with four numerical examples, each one with a rigorous analysis. Finally, Section \ref{Sec:5} is devoted to the conclusions obtained in this work, and further discussion.

\section{Preliminaries}\label{Sec:2}

\subsection{Setting}\label{SSec:2.1}

\subsubsection{PDE Setting}\label{SSSec:2.1.1}

Along this work we will deal with the fractional Laplacian operator. Let $\alpha \in (0,2)$ and $d \geq 2$. The fractional Laplacian operator $(-\Delta)^{\frac{\alpha}2}$ is formally defined in $\R^d$ as:

\begin{equation}\label{FractLapOp}
	-(-\Delta)^{\alpha/2} u(x) = c_{d,\alpha} \lim_{\varepsilon \downarrow 0} \int_{\R^d \setminus B(0,\varepsilon)} \frac{u(y)-u(x)}{|y-x|^{d+\alpha}}dy, \hspace{.5cm} x \in \R^d,
\end{equation}

where {\color{black}$B(0,\varepsilon) = \{ x \in \R^d: \abs{x} < \varepsilon \}$}, $c_{d,\alpha}=- \frac{2^\alpha \Gamma((d+\alpha)/2)}{\pi^{d/2}\Gamma(-\alpha/2)}$ and $\Gamma(\cdot)$ is the classical Gamma function. For $d\geq 2$, let $D\subset \R^d$ be a convex and bounded domain, and consider the following Dirichlet boundary value problem

\begin{equation}\label{eq:LapFrac}
	\begin{cases}
		(-\Delta)^{\alpha/2} u = f &\quad \hbox{in} \; D, \\
		\hfill u = g &\quad \hbox{on} \; D^c,
	\end{cases}
\end{equation}

where we consider the following assumptions on the functions $f$ and $g$:
\begin{itemize}
	\item $g:D^c \to \R$ is a continuous function in $L^{1}_{\alpha}(D^c)$, $L_g >0$, that is to say
	\begin{equation}\label{Hg0}
		\int_{D^c} \frac{|g(x)|}{1+|x|^{d+\alpha}}dx < \infty. \tag{Hg-0}
	\end{equation}
	\item $f:D \to \R$ is a continuous function, such that
	\begin{equation}\label{Hf0}
		f \in C^{\alpha + \varepsilon_0}(\overline{D}) \qquad \hbox{for some fixed} \qquad \varepsilon_0>0. \tag{Hf-0}
	\end{equation}
\end{itemize}

\begin{rem}
	For the approximation Theorems in Subsection \ref{SSec:2.2}, we assume additionally that $g$ is a $L_g$-Lipschitz continuous function in Assumption \ref{Hg0}, and that $f$ is a $L_f$-Lipschitz continuous function in Assumption \ref{Hf0}.
\end{rem}

Under Assumptions \ref{Hg0}, \ref{Hf0}, Kyprianou{\color{black}, Osojnik and Shardlow} proved in \cite{AK1} that there exists a representation formula for the solution of \eqref{eq:LapFrac} via stochastic processes $(X_t)_{t\geq0}$ called isotropic $\alpha$-stable processes. The above is summarized in the next Theorem

\begin{teo}[\cite{AK1}, Theorem 6.1]
	Let $d \geq 2$ and assume that $D$ is a bounded domain in $\R^d$. Suppose that $g$ is a continuous function which belongs to $L^1_{\alpha}(D^c)$. Moreover, suppose that $f$ is a function in $C^{\alpha + \varepsilon}(\overline{D})$ for some $\varepsilon > 0$. Then there exists a unique continuous solution to \eqref{eq:LapFrac} in $L^1_{\alpha}(\R^d)$ given by
	\begin{equation}\label{eq:sol1}
		u(x)=\E_x[g(X_{\sigma_D})] + \E_x \left[\int_0^{\sigma_D} f(X_s)ds\right], \hspace{.5cm} x \in D,
	\end{equation}
	where $\sigma_D = \inf\{t>0: X_t \notin D\}$.
\end{teo}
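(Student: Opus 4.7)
The plan is to take the right-hand side of \eqref{eq:sol1} as the candidate solution and verify separately that it is well-defined and continuous on $\R^d$, that it satisfies the PDE in $D$ with the prescribed boundary data on $D^c$, and that it is the unique continuous solution in $L^1_{\al}(\R^d)$. Let
\[
v(x) := \E_x[g(X_{\si_D})] + \E_x\!\left[\int_0^{\si_D} f(X_s)\,ds\right].
\]
For $x \in D^c$ we have $\si_D = 0$ almost surely, so $v(x) = g(x)$ and the boundary condition is automatic. For $x \in D$, each expectation is finite: the second because $f$ is continuous on the compact set $\overline D$ (hence bounded) and $\E_x[\si_D] < \infty$ by standard exit-time estimates for the isotropic $\al$-stable process from a bounded set; the first because the harmonic measure $\PP_x(X_{\si_D} \in dy)$ admits an explicit Poisson-type kernel decaying like $|y|^{-(d+\al)}$ for large $|y|$, so that the assumption $g \in L^1_{\al}(D^c)$ in \eqref{Hg0} secures absolute convergence. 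Continuity of $v$ on $\R^d$ follows from the continuity of $g$ on $D^c$, the H\"older regularity of $f$, and the fact that every boundary point of a convex domain is regular for $D^c$ with respect to $(X_t)$, which yields continuity of $x \mapsto (\si_D, X_{\si_D})$ in law.

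The central step is to show that $(-\De)^{\al/2} v = f$ pointwise in $D$. My tool is the martingale problem for the isotropic $\al$-stable process: its infinitesimal generator applied to a sufficiently regular $\phi$ is $-(-\De)^{\al/2}\phi$, so that
\[
M_t^{\phi} = \phi(X_t) - \phi(X_0) + \int_0^t (-\De)^{\al/2}\phi(X_s)\,ds
\]
is a local martingale under $\PP_x$. To apply this with $\phi = v$, I first upgrade $v$ from merely continuous to $C^{\al + \ve}_{\mathrm{loc}}(D)$. This is exactly where the hypothesis $f \in C^{\al + \ve_0}(\overline D)$ from \eqref{Hf0} is used: interior Schauder-type regularity for the fractional Laplacian promotes the H\"older regularity of the data to the regularity of $v$ needed to make $(-\De)^{\al/2} v$ well-defined pointwise. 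Once this is in hand, I apply the optional stopping theorem to $M^v$ at $t \wedge \si_D$ and let $t \to \infty$, using the integrability bounds established above to invoke dominated convergence. Since $v(X_{\si_D}) = g(X_{\si_D})$ by definition, rearranging yields $(-\De)^{\al/2} v(x) = f(x)$ for every $x \in D$.

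Uniqueness is handled by a maximum principle for $(-\De)^{\al/2}$ on the bounded domain $D$: if $u_1, u_2$ are two continuous solutions in $L^1_{\al}(\R^d)$, then $w = u_1 - u_2$ satisfies $(-\De)^{\al/2} w = 0$ in $D$ with $w = 0$ on $D^c$, and the fractional maximum principle forces $w \equiv 0$. The main technical obstacle I anticipate is the regularity upgrade that makes $v$ admissible in the martingale formula; the continuity and integrability arguments are standard potential-theoretic facts for stable processes, and the uniqueness step reduces to a single application of the maximum principle.
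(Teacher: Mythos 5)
The paper does not prove this theorem; it quotes it verbatim as \cite{AK1}, Theorem~6.1 and uses it as a black box, so there is no in-paper proof to compare your attempt against. Judged on its own, your reconstruction has the right overall shape (a Dynkin/optional-stopping argument for existence, a fractional maximum principle for uniqueness), but the central step as written has a real gap. The process $M_t^\phi = \phi(X_t) - \phi(X_0) + \int_0^t (-\Delta)^{\alpha/2}\phi(X_s)\,ds$ is a martingale under $\PP_x$ only for $\phi$ in the domain of the generator (say $C^2_c(\R^d)$ or a comparable class), whereas your $v$ is at best locally $C^{\alpha+\varepsilon}$ inside $D$ and merely has $L^1_\alpha$ growth globally; applying Dynkin's formula to $v$ therefore requires a truncation and localization argument that you do not supply. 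Moreover, the ``interior Schauder upgrade'' you invoke is circular as stated: interior regularity estimates for $(-\Delta)^{\alpha/2}$ are normally proved for functions already known to satisfy $(-\Delta)^{\alpha/2} v = f$ in some weak sense, which is exactly the conclusion you want.

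The route that avoids these issues, and the one implicit in the probabilistic literature behind \cite{AK1}, is to split $v = h + w$ with $h(x) = \E_x[g(X_{\sigma_D})]$ and $w(x) = \E_x[\int_0^{\sigma_D} f(X_s)\,ds]$. One shows $h$ is $\alpha$-harmonic in $D$ directly from the strong Markov property applied to first exits from small balls $B \subset D$ (a mean-value characterization of $\alpha$-harmonicity that requires no a priori differentiability of $h$), and identifies $w$ with the Green potential $G_D f$, so that $(-\Delta)^{\alpha/2} w = f$ follows from explicit properties of the Green kernel together with the H\"older regularity of $f$, rather than from Schauder estimates applied to the unknown. Your remaining ingredients are sound in spirit, but note two hypotheses you slip in: the theorem as stated assumes only that $D$ is bounded, so the regular-boundary-point property you get from convexity must be either added as a hypothesis or verified before continuity of $v$ up to $\partial D$ can be asserted; and the uniqueness step needs a version of the fractional maximum principle valid for functions in $L^1_\alpha(\R^d)$ vanishing on $D^c$, which is true but should be stated precisely.
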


An equivalent representation of \eqref{eq:sol1} can be stated in terms of the so-called Walk-on-Spheres (WoS) processes. We need to {\color{black}provide} additional objects that will be used in the new representation. First of all, the WoS process $(\rho_n)_{n \in \N_0}$ is defined as follows:
\begin{enumerate}
	\item $\rho_0 = X_0$,
	\item for all $n \in \N$, {\color{black}denote} $r_n = \dist(\rho_{n-1},\partial D)$. Let $Y_n$ be an independent copy of $X_{\sigma_{B(0,1)}}$. Then, $\rho_n$ is defined recursively as follows 
	\begin{equation}\label{eq:rec_rho}
	\rho_n = \rho_{n-1} + r_n Y_n.
	\end{equation}
	\item If $\rho_n \notin D$, let $N = n$ and stop the algorithm. Otherwise go to (2). 
\end{enumerate}

\begin{rem} The following remarks about the WoS processes can be stated:
	\begin{enumerate}
		\item The random variable $N$ is formally defined as $N = \min \{n \in \N_0 : \rho_n \notin D \}$. {\color{black}If $\alpha \in (0,2)$ and $D\subset \R^{d}$ is a convex, bounded domain then $N$} is a random variable finite a.s. \cite{AK1}.
		\item For further explanation of the WoS processes the reader can consult the work of Kyprianou et. al. \cite{AK1} and the paper of the author \cite{Nico}.
		\item It is easier to work with the WoS process instead of the $\alpha$-stable process for the reason that we do not need to simulate the entire trajectory of $(X_t)_{t\geq0}$, but a finite set of points of this trajectory.
	\end{enumerate}
\end{rem}

The next theorem gives the distribution of $X_{\sigma_{B(0,1)}}$, which allow us to simulate efficiently the copies $Y_n$ in the definition of the WoS processes. {\color{black}This theorem was proved in first instance in 1961 by Blumenthal, Getoor and Ray \cite{Blumenthal1}}

\begin{teo}[\cite{Blumenthal1,AK1}]\label{teo:Xball}
	Let $B(0,1)$ the unit ball centered at the origin and write $\sigma_{B(0,1)} = \inf \{t \geq 0 : X_t \notin B(0,1)\}$. Then
	\begin{equation}\label{eq:PXB01}
		\PP_0(X_{\sigma_{B(0,1)}} \in dy) = \pi^{-(d/2+1)}\Gamma(d/2)\sin(\pi \alpha/2) |1-|y|^2|^{-\alpha/2} |y|^{-d}dy, \quad |y|>1.
	\end{equation}
\end{teo}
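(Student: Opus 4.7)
The plan is to identify $\PP_0(X_{\sigma_{B(0,1)}} \in dy)$ with the Poisson kernel of the fractional Dirichlet problem on the unit ball, evaluated at $x=0$, and then pin down the normalizing constant by integration. First I would note that, by the strong Markov property and Dynkin's formula, for any bounded continuous $\phi$ supported on $\overline{B(0,1)}^{\,c}$ the function $u(x) := \E_x[\phi(X_{\sigma_{B(0,1)}})]$ is the unique $\alpha$-harmonic extension of $\phi$ inside the ball; equivalently, the exit distribution is characterized by the exterior Poisson kernel of $(-\Delta)^{\alpha/2}$ on $B(0,1)$.

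Next, I would introduce Riesz's candidate kernel
\begin{equation*}
P(x,y) \;=\; C_{d,\alpha}\left(\frac{1-|x|^2}{|y|^2-1}\right)^{\alpha/2}\frac{1}{|x-y|^d}, \qquad |x|<1,\ |y|>1,
\end{equation*}
and verify that $v(x) := \int_{|y|>1} P(x,y)\phi(y)\,dy$ is $\alpha$-harmonic in $B(0,1)$ with continuous boundary limit $\phi$ on $\partial B(0,1)$. Two standard routes are available: a direct pointwise computation of $(-\Delta)^{\alpha/2}$ acting in $x$ on $P(x,y)$ using the principal-value definition \eqref{FractLapOp}, or, more cleanly, the Kelvin transform $y\mapsto y/|y|^2$ together with rotational invariance, which interchanges the interior and exterior of the ball and conjugates the fractional Laplacian up to an explicit Jacobian weight. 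Uniqueness of $\alpha$-harmonic extensions (by the maximum principle, or equivalently by the martingale characterization used in \eqref{eq:sol1}) then forces $u\equiv v$, so the density of $\PP_x(X_{\sigma_{B(0,1)}}\in dy)$ is exactly $P(x,y)$.

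Setting $x=0$ collapses the kernel to $P(0,y) = C_{d,\alpha}(|y|^2-1)^{-\alpha/2}|y|^{-d}$, matching the claimed expression up to the constant. To identify $C_{d,\alpha}$, I would impose $\int_{|y|>1} P(0,y)\,dy = 1$. Using spherical coordinates with $|S^{d-1}| = 2\pi^{d/2}/\Gamma(d/2)$ reduces the problem to the radial integral
\begin{equation*}
\int_1^\infty (r^2-1)^{-\alpha/2}\,\frac{dr}{r} \;=\; \tfrac{1}{2}\int_0^1 (1-t)^{-\alpha/2}\,t^{\alpha/2-1}\,dt \;=\; \tfrac{1}{2}\,B\!\left(\tfrac{\alpha}{2},\,1-\tfrac{\alpha}{2}\right),
\end{equation*}
after the substitutions $s=r^2$ and $t=1/s$. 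The Euler reflection formula $\Gamma(\alpha/2)\Gamma(1-\alpha/2) = \pi/\sin(\pi\alpha/2)$ then yields $C_{d,\alpha} = \pi^{-(d/2+1)}\Gamma(d/2)\sin(\pi\alpha/2)$, which is exactly the constant in \eqref{eq:PXB01}.

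The main obstacle is the verification that $P(x,y)$ really is the Poisson kernel, i.e. that $v$ is $\alpha$-harmonic in $B(0,1)$. This is the nonlocal analog of checking Poisson-kernel harmonicity in the classical case and is the delicate step: the pointwise computation of $(-\Delta)^{\alpha/2}P(\cdot,y)$ involves a singular principal value with a nonintegrable tail, while the Kelvin-transform route requires careful bookkeeping of the transformation law and of the exterior integrability assumed in \eqref{Hg0}. Rotational symmetry (it suffices to test against radial $\phi$) and reducing by invariance to the case where the argument lies on a fixed axis make the computation tractable and ultimately reproduce the classical Blumenthal--Getoor--Ray formula.
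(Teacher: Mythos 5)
The paper does not actually prove Theorem~\ref{teo:Xball}: it is stated as a quoted result and attributed to Blumenthal, Getoor, and Ray \cite{Blumenthal1}, with \cite{AK1} as a secondary source. There is therefore no ``paper's proof'' to compare against, and your proposal should be judged against the original sources. Your outline is essentially the classical Riesz/Blumenthal--Getoor--Ray route: identify the exit law with the $\alpha$-stable Poisson kernel of the ball, write down Riesz's candidate kernel
\[
P(x,y) = C_{d,\alpha}\left(\frac{1-|x|^2}{|y|^2-1}\right)^{\alpha/2}\frac{1}{|x-y|^d},
\]
verify $\alpha$-harmonicity via the Kelvin transform $y\mapsto y/|y|^2$, use a maximum principle (or the martingale characterization underlying \eqref{eq:sol1}) for uniqueness, specialize to $x=0$, and normalize by integrating over $|y|>1$. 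Your radial integral
\[
\int_1^\infty (r^2-1)^{-\alpha/2}\,\frac{dr}{r}=\tfrac12 B\!\left(\tfrac{\alpha}{2},1-\tfrac{\alpha}{2}\right)
\]
is correct, and together with $|S^{d-1}|=2\pi^{d/2}/\Gamma(d/2)$ and Euler's reflection formula it reproduces $C_{d,\alpha}=\pi^{-(d/2+1)}\Gamma(d/2)\sin(\pi\alpha/2)$ exactly. You also correctly flag where the real work hides: establishing that $\int P(x,y)\phi(y)\,dy$ is $\alpha$-harmonic in the ball with boundary values $\phi$. That step is not a formality --- it requires either a careful principal-value computation of $(-\Delta)^{\alpha/2}P(\cdot,y)$ or a rigorous bookkeeping of the Kelvin-transform conjugation law for $(-\Delta)^{\alpha/2}$ --- so your writeup is a sound \emph{outline} rather than a complete proof, but it follows the same underlying strategy as the cited reference rather than diverging from it.
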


On other hand, we define the expected occupation measure of the stable process prior to exiting a ball of radius $r>0$ centered in $x \in \R^d$ as follows:

\[
V_r(x,dy) := \int_0^{\infty} \PP_x\left(X_t \in dy, t<\sigma_{B(x,r)}\right)dt, \quad x\in \R^d, \quad |y|<1, \quad r>0.
\]

The next result is valid for $V_1(0,dy)$.

\begin{teo}[\cite{AK1}]\label{teo:V10}
	The measure $V_1(0,dy)$ is given for $|y|<1$ by
	\[
	V_1(0,dy) = 2^{-\alpha} \pi^{-d/2} \frac{\Gamma(d/2)}{\Gamma(\alpha/2)^2}|y|^{\alpha-d} \left( \int_0^{|y|^{-2} -1} (u+1)^{-d/2} u^{\alpha/2-1}du\right)dy.
	\]
\end{teo}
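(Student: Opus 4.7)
The plan is to recognise $V_1(0,dy)$ as the Green measure on $B(0,1)$ for the isotropic $\alpha$-stable process killed on exiting the ball, started from the origin. Writing $p_t^{B(0,1)}(x,y)$ for the transition density of the killed process and $G^\alpha_{B(0,1)}(x,y) = \int_0^\infty p_t^{B(0,1)}(x,y)\,dt$ for its Green function, Fubini gives
\[
V_1(0,dy) \;=\; \int_0^\infty \PP_0\!\left(X_t \in dy,\ t < \sigma_{B(0,1)}\right) dt \;=\; G^\alpha_{B(0,1)}(0,y)\,dy.
\]
So the theorem reduces to an explicit formula for $G^\alpha_{B(0,1)}(0,\cdot)$.

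The key ingredient I would invoke is the Riesz--Blumenthal--Getoor--Ray closed-form Green function on the unit ball: for $\alpha \in (0,2)$ and $d \geq 2$,
\[
G^\alpha_{B(0,1)}(x,y) \;=\; 2^{-\alpha}\pi^{-d/2}\,\frac{\Gamma(d/2)}{\Gamma(\alpha/2)^2}\,|x-y|^{\alpha-d}\int_0^{w(x,y)} \frac{u^{\alpha/2-1}}{(1+u)^{d/2}}\,du,
\]
with $w(x,y) = (1-|x|^2)(1-|y|^2)/|x-y|^2$. Specialising to $x=0$ one has $w(0,y)=|y|^{-2}-1$ and $|0-y|^{\alpha-d}=|y|^{\alpha-d}$, and the stated expression for $V_1(0,dy)$ drops out immediately. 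This is the only step where the specific value $x=0$ enters; rotational invariance of the process already told us in advance that the density had to be a radial function.

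To establish the ambient Green-function formula I would proceed in two steps. First, Dynkin's formula yields the identity $\E_x\!\left[\int_0^{\sigma_{B(0,1)}} f(X_s)\,ds\right] = \int_{B(0,1)} G^\alpha_{B(0,1)}(x,y)\,f(y)\,dy$ for continuous $f$, so any candidate kernel can be pinned down by testing against a family of nice solutions of the Dirichlet Poisson problem. A convenient family comes from Getoor's identity $(-\Delta)^{\alpha/2}\big[(1-|x|^2)^{\alpha/2}_+\big] = C_{d,\alpha}$ on $B(0,1)$, together with its translates and the Poisson kernel supplied by Theorem \ref{teo:Xball}, which together determine both the constant prefactor and the integral structure. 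Second, one verifies that the candidate kernel satisfies the correct exterior boundary condition and the right singularity $|x-y|^{\alpha-d}$ near the diagonal, which forces uniqueness.

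The main obstacle is the explicit verification of the Green kernel itself: showing that the right-hand side of the displayed formula indeed equals the Green function is a non-trivial singular-integral calculation on the ball. The standard way around doing this by brute force is to apply the Kelvin/inversion transform, which maps the Dirichlet problem on $B(0,1)$ to a Riesz-potential computation on the exterior of the ball (or a half-space after further conformal reduction), where the relevant one-dimensional integrals reduce to Beta-function evaluations. It is exactly those Beta identities that produce the constant $2^{-\alpha}\pi^{-d/2}\Gamma(d/2)/\Gamma(\alpha/2)^2$ and, after a change of variables $u = (1-|y|^2)|y|^{-2}/t$, the integral representation $\int_0^{|y|^{-2}-1} (u+1)^{-d/2} u^{\alpha/2-1}\,du$ appearing in the claim.
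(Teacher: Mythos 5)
Your proposal is correct and follows the same route as the source that the paper cites: Theorem~\ref{teo:V10} is stated in this paper without proof, attributed to Kyprianou, Osojnik and Shardlow~\cite{AK1}, who in turn obtain $V_1(0,dy)$ precisely by identifying the expected occupation measure with the Green function $G^\alpha_{B(0,1)}(0,y)\,dy$ of the killed isotropic $\alpha$-stable process and then invoking the Blumenthal--Getoor--Ray/Riesz closed form
\[
G^\alpha_{B(0,1)}(x,y) = 2^{-\alpha}\pi^{-d/2}\frac{\Gamma(d/2)}{\Gamma(\alpha/2)^2}\,|x-y|^{\alpha-d}\int_0^{(1-|x|^2)(1-|y|^2)/|x-y|^2}\frac{u^{\alpha/2-1}}{(1+u)^{d/2}}\,du,
\]
specialized at $x=0$ where $w(0,y)=|y|^{-2}-1$, exactly as you do. The only reservation is that your second and third paragraphs sketch a re-derivation of the Green-function formula itself rather than establishing it; that part is genuinely nontrivial and is where one must lean on the classical Riesz/Blumenthal--Getoor--Ray computation (or the Kelvin-transform argument you allude to), but as the paper treats this as a cited black box, your reduction to that formula is the substance of the argument and it is carried out correctly.
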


\begin{rem}\label{rem:V10}
	In \cite{Nico} we obtain the following equivalent form of $V_1(0,dy)$:
	\[
	V_1(0,dy) = 2^{-\alpha} \pi^{-d/2} \frac{\Gamma(d/2)}{\Gamma(\alpha/2)^2} B\left(\frac d2-\frac \alpha2,\frac \alpha2\right) |y|^{\alpha-d} \left(1-I\left(|y|^2;\frac d2- \frac \alpha2, \frac \alpha2\right)\right)dy,
	\]
	where $B(z,w)$ is the beta function
	\[
	B(z,w) := \int_0^1 u^{z-1} (1-u)^{w-1}du,
	\]
	which satisfies
	\[
	B(z,w) = \frac{\Gamma(z)\Gamma(w)}{\Gamma(z+w)},
	\]
	and $I(x;z,w)$, $x<1$ is the incomplete beta function
	\[
	I(x;z,w) := \frac{1}{B(z,w)} \int_0^x u^{z-1} (1-u)^{w-1}du.
	\]
\end{rem}

With an abuse of notation, {\color{black} for any bounded measurable function $f$ denote}
\[
V_r(x,f(\cdot)) = \int_{B(x,r)} f(y)V_r(x,dy).
\]
Now we are able to write the representation of $u$ in terms the WoS process, which proof is available in \cite{AK1}:

\begin{lem}[\cite{AK1}]
	For $x \in D$, $g \in L^{1}_{\alpha}(D^c)$ and $f \in C^{\alpha + \varepsilon_0}(\overline{D})$ we have the representation
	\begin{equation}\label{eq:sol2}
		u(x) = \E_x \left[g(\rho_N)\right] + \E_x \left[\sum_{n=1}^N r_n^{\alpha} V_1(0,f(\rho_{n-1}+r_n \cdot))\right].
	\end{equation} 
\end{lem}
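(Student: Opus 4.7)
The plan is to decompose the representation \eqref{eq:sol1} into its two summands and to show that each matches the corresponding piece of \eqref{eq:sol2} via the strong Markov property of $(X_t)_{t\geq 0}$ applied at successive sphere-exit times, together with the self-similarity and translation invariance of the isotropic $\alpha$-stable process.

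I would first couple the WoS sequence $(\rho_n)$ to the sample path of $X$ by setting $T_0 = 0$, $T_n := \inf\{t > T_{n-1} : X_t \notin B(X_{T_{n-1}}, r_n)\}$, and $\rho_n := X_{T_n}$, where $r_n = \dist(\rho_{n-1}, \partial D)$. Translation invariance combined with the $\alpha$-scaling $c^{-1}X_{c^{\alpha} t} \stackrel{d}{=} X_t$ reduces the law of each increment $\rho_n - \rho_{n-1}$, conditional on $\rho_{n-1}$, to that of $r_n Y_n$ with $Y_n$ an independent copy of $X_{\sigma_{B(0,1)}}$, in agreement with \eqref{eq:rec_rho} and Theorem \ref{teo:Xball}. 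Since $D$ is convex, every ball $B(\rho_{n-1}, r_n)$ is contained in $\overline{D}$, so $T_n \leq \sigma_D$ for all $n < N$; the first time the walker leaves $D$ is then precisely $T_N$, giving $\rho_N = X_{\sigma_D}$ almost surely and hence $\E_x[g(\rho_N)] = \E_x[g(X_{\sigma_D})]$.

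For the source term, I would partition $[0, \sigma_D]$ into the successive intervals $[T_{n-1}, T_n]$ for $1 \leq n \leq N$, using that $N < \infty$ a.s. for convex bounded domains (Remark 2.2). The strong Markov property at the stopping time $T_{n-1}$ yields
\begin{equation*}
\E_x\!\left[\int_{T_{n-1}}^{T_n} f(X_s)\, ds \,\Big|\, \rho_{n-1}\right] = V_{r_n}(\rho_{n-1}, f(\cdot)),
\end{equation*}
and the change of variables $t = r_n^{\alpha} s$ combined with the scaling identity for the stable process gives
\begin{equation*}
V_{r_n}(\rho_{n-1}, f(\cdot)) = r_n^{\alpha}\, V_1\!\left(0, f(\rho_{n-1} + r_n\,\cdot\,)\right).
\end{equation*}
Summing over $n = 1, \ldots, N$ and interchanging sum and expectation, which is justified by the bound $\|f\|_{L^{\infty}(\overline{D})} V_1(0, \mathbf{1}_{B(0,1)}) \sum_n r_n^{\alpha} \leq \|f\|_{\infty} \E_x[\sigma_D] < \infty$, produces the second term of \eqref{eq:sol2}.

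The main obstacle is the careful measure-theoretic setup of the coupling: one must verify that each $T_n$ is a stopping time so that the strong Markov property applies, and that the chain of inclusions $B(\rho_{n-1}, r_n) \subset \overline{D}$, which rests on convexity of $D$, indeed forces $T_N = \sigma_D$. Once the coupling is rigorous, the identification of each conditional expectation with $V_{r_n}(\rho_{n-1}, f)$ and the scaling reduction to $V_1$ are routine applications of the Markov and self-similarity properties of $(X_t)_{t\geq 0}$.
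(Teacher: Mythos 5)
The paper does not present its own proof of this lemma; it simply cites \cite{AK1}. Your blind reconstruction is correct and follows the standard argument: couple the walk-on-spheres chain to the trajectory of $(X_t)_{t\geq 0}$ via successive sphere-exit times $T_n$, observe that $T_N=\sigma_D$ so that $\rho_N=X_{\sigma_D}$ and the boundary term matches, decompose $\int_0^{\sigma_D} f(X_s)\,ds$ along the intervals $[T_{n-1},T_n]$, and apply the strong Markov property together with translation invariance and the $\alpha$-scaling of the stable process to rewrite each conditional increment as $r_n^{\alpha}V_1(0,f(\rho_{n-1}+r_n\,\cdot))$. This is almost certainly the same proof as in \cite{AK1}.

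Two small inaccuracies are worth flagging. First, the inclusion $B(\rho_{n-1},r_n)\subset D$ does not rely on convexity: for any open $D$ and $x\in D$ one has $\dist(x,\partial D)=\dist(x,D^{c})$, so the ball of radius $r_n=\dist(\rho_{n-1},\partial D)$ around $\rho_{n-1}$ avoids $D^{c}$ by definition of $r_n$. The convexity hypothesis in this paper is used elsewhere (see Remark 2.5), not for this containment. Second, to invoke the strong Markov property cleanly you should condition on the full $\sigma$-algebra $\mathcal{F}_{T_{n-1}}$ (or at least on $\rho_{n-1}$ together with the event $\{N\geq n\}$) rather than on $\rho_{n-1}$ alone; as written, the conditional expectation mixes a random quantity $\sum_n r_n^{\alpha}$ with its expectation inside a single inequality chain. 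Cleaning this up, the Tonelli justification is
\begin{equation*}
\E_x\Big[\sum_{n=1}^{N} r_n^{\alpha}\, V_1\big(0,|f|(\rho_{n-1}+r_n\,\cdot)\big)\Big]\;\leq\;\|f\|_{L^\infty(\overline D)}\,\E_x\Big[\sum_{n=1}^{N} r_n^{\alpha}\,\kappa_{d,\alpha}\Big]\;=\;\|f\|_{L^\infty(\overline D)}\,\E_x[\sigma_D]\;<\;\infty,
\end{equation*}
and the rest of the argument is sound.
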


Notice that we can {\color{black}determine} a probability measure over $B(0,1)$ from the measure $V_1(0,dy)$. Indeed, {\color{black}denote}
\begin{equation}\label{eq:kappa}
	\kappa_{d,\alpha} := \int_{B(0,1)} V_1(0,dy),
\end{equation}
then, $\mu$ defined as
\[
\mu(dy):= \kappa_{d,\alpha}^{-1} V_1(0,dy),
\]
is a probability measure over $B(0,1)$. With this normalization, the representation \eqref{eq:sol2} can be written as:

\begin{equation}\label{eq:solfinal}
	u(x) = \E_x \left[g(\rho_N)\right] + \E_x \left[ \sum_{n=1}^{N} r_n^{\alpha} \kappa_{d,\alpha} \E^{(\mu)}\left[f(\rho_{n-1} + r_n v)\right]\right],
\end{equation}

where $v$ is a random variable with density $\mu$ and $\E^{(\mu)}$ represent the expectation given by the probability measure $\mu$.

\subsubsection{DNN Setting}\label{SSSec:2.1.2}

For the identification of deep neural networks, we will use the same notation as in \cite{Hutz}. In particular, we consider the following setting for the architecture of a fully connected ReLu DNN. For $d \in \N$ define 
\[
A_d : \R^d \rightarrow \R^d,
\] 
the ReLu activation function such that for all $z \in \R^d$, $z=(z_1,...,z_d)$, with
\[
A_d(z)=(\max\{z_1,0\},...,\max\{z_d,0\}).
\] 
Let also
\begin{enumerate}
	\item[(NN1)] $H \in \N$ be the number of hidden layers; 
	\item[(NN2)] $(k_i)_{i=0}^{H+1}$ be a positive integer sequence; 
	\item[(NN3)] $W_{i} \in \R^{k_i \times k_{i-1}}$, $B_{i} \in \R^{k_i}$, for any $i=1,...,H+1$ be the weights and biases, respectively;
	\item[(NN4)] $x_0 \in \R^{k_0}$, and for $i = 1,...,H$ let 
	\begin{equation}\label{x_i}
		x_i = A_{k_i}(W_ix_{i-1}+B_i).
	\end{equation}
\end{enumerate}
We call 
\begin{equation}\label{eq:DNN_def}
	\Phi := (W_i,B_i)_{i=1}^{H+1} \in \prod_{i=1}^{H+1} \left(\R^{k_i \times k_{i-1}} \times \R^{k_i}\right),
\end{equation}
the DNN associated to the parameters in (NN1)-(NN4). The space of all DNNs in the sense of \eqref{eq:DNN_def} is going to be denoted by ${\bf N}$, namely
\[
{\bf N} = \bigcup_{H \in \N} \bigcup_{(k_0,...,k_{H+1})\in \N^{H+2}} \left[\prod_{i=1}^{H+1} \left(\R^{k_i \times k_{i-1}} \times \R^{k_i}\right)\right].
\]
Define the realization of the DNN $\Phi \in {\bf N}$ as
\begin{equation}\label{Realization}
	\mathcal{R}(\Phi)(x_0) =  W_{H+1}x_H + B_{H+1}.
\end{equation}
Notice that $\mathcal{R}(\Phi) \in C(\R^{k_0},\R^{k_{H+1}})$. For any $\Phi \in {\bf N}$ define
\begin{equation}\label{P_D}
	\mathcal{P}(\Phi) = \sum_{n	=1}^{H+1} k_n (k_{n-1}+1), \qquad \mathcal{D}(\Phi) = (k_0,k_1,...,k_{H+1}),
\end{equation}
and
\begin{equation}\label{norma}
	\vertiii{  \mathcal{D}(\Phi )  } = \max\{ k_0,k_1,...,k_{H+1} \}.
\end{equation}
The entries of $(W_i,B_i)_{i=1}^{H+1}$ will be the weights and biases of the DNN, $\mathcal{P}(\Phi)$ represents the total number of parameters used to describe the DNN, working always with fully connected DNNs, and $\mathcal{D}(\Phi)$ representes the dimension of each layer of the DNN. Notice that $\Phi \in {\bf N}$ has $H+2$ layers: $H$ of them hidden, one input and one output layer.

\subsection{Approximation Theorem}\label{SSec:2.2} 

In \cite{Nico} it has been proved that the solution $u$ of the problem \eqref{eq:LapFrac}, written as \eqref{eq:solfinal}, can be approximated via ReLu DNNs with arbitrary accuracy. In order to obtain that result, additional assumptions on the functions $f,g$ and on the structure of the domain $D$ need to be stated. In particular, we need the following assumptions:

\begin{ass}\label{Sup:g}
	{\color{black} Let $d\geq2$, $D \subset \R^d$ be a convex and bounded domain. Let $a,b \geq 1$, $p \in (1,\alpha)$, $B>0$ some constants which do not depend on $d$}.  Let $g:D^c\to \R$ satisfying \eqref{Hg0}.{\color{black} Assume that for any $\delta_g \in (0,1]$, the function $g$ can be approximated by a ReLu DNN $\Phi_g$ which satisfies}
	\begin{enumerate}
		\item $\mathcal R(\Phi_{g}):D^c \to \R$  is continuous, and
		\item The following are satisfied:
		\begin{align}
			|g(y)-\left(\mathcal{R}(\Phi_{g})\right)(y)| &\leq\delta_g Bd^p(1+|y|)^{p}, \hspace{.5cm} \forall y \in D^c. \tag{Hg-1} \label{H1}\\
			|\left(\mathcal{R}(\Phi_{g})\right)(y)| &\leq Bd^p(1+|y|)^{p}, \hspace{1.0cm} \forall y \in D^c. \tag{Hg-2} \label{H2}\\
			\vertiii{\mathcal{D}(\Phi_{g})} &\leq B d^b \delta_g^{-a}. \tag{Hg-3} \label{H3}
		\end{align}
	\end{enumerate}
\end{ass}

\begin{ass}\label{Sup:D}
	{\color{black}Let $d\geq2$}. Let $\alpha\in (1,2)$, $a,b \geq 1$ and $B>0$ {\color{black} as in Assumption \ref{Sup:g}}. Suppose that $D${\color{black}$\subset\R^d$ is a convex,} bounded domain that enjoys the following structure:
	\begin{enumerate}
		\item For any $\delta_{\dist} \in (0,1]$, the function $x \mapsto \dist(x,\partial D)$ can be approximated by a ReLu DNN $\Phi_{\dist} \in {\bf N}$ such that 
		\[
		\sup_{x \in D} \left|\dist(x,\partial D) - \left(\mathcal{R}(\Phi_{\dist})\right)(x)\right| \leq \delta_{\dist}, \tag{HD-1} \label{HD-1} 
		\]
		and
		\[
		\vertiii{\mathcal{D}(\Phi_{\dist})} \leq Bd^b\lceil \log(\delta_{\dist}^{-1}) \rceil^{a}. \tag{HD-2} \label{HD-2}
		\]
		\item For all $\delta_\alpha \in (0,1)$ there exists a ReLu DNN $\Phi_{\alpha} \in {\bf N}$, {\color{black} $\mathcal R(\Phi_{\alpha}) \in C(\R,\R) $} such that
		\begin{equation}
			\sup_{|x|\leq\diam(D)}\left|\left(\mathcal{R}(\Phi_{\alpha})\right)(x) -x^{\alpha}\right| \leq \delta_{\alpha}, \tag{HD-3} \label{HD-3}
		\end{equation}
		and
		\[
		\vertiii{\mathcal{D}(\Phi_{\alpha})} \leq Bd^b \delta_{\alpha}^{-a}. \tag{HD-4} \label{HD-4}
		\]
		Moreover, $\mathcal{R}(\Phi_{\alpha})$ is a $L_{\alpha}$-Lipschitz function, $L_{\alpha}>0$, for $|x|\leq \diam(D)$.
	\end{enumerate}
\end{ass} 

\begin{ass}\label{Sup:f}
	Let $d \geq 2$, {\color{black} $D\subset\R^d$ be a convex and bounded domain}. {\color{black} Let $a,b \geq 1$ and $B>0$ as in Assumption \ref{Sup:g}}. Let $f: D \to \R$  a function satisfying \eqref{Hf0}. {\color{black} Assume that for any} $\delta_f \in (0,1)$, {\color{black} the function $f$ can be approximated by a ReLu DNN $\Phi_f \in {\bf N}$ that satisfies}
	\begin{enumerate}
		\item $\mathcal{R}(\Phi_f):D \to \R$ is $\widetilde{L}_f$-Lipschitz continuous, $\widetilde{L}_f>0$, and
		\item The following are satisfied:
		\begin{align}
			|f(x) - \left(\mathcal{R}(\Phi_f)\right)(x)| &\leq \delta_f, \qquad x \in D. \tag{Hf-1} \label{H5}\\
			\vertiii{\mathcal{D}(\Phi_f)} &\leq Bd^b\delta_f^{-a}. \tag{Hf-2} \label{H6} 
		\end{align} 
	\end{enumerate}
\end{ass}

With these three assumptions, one can state the following Theorem, {\color{black}as in \cite{Nico}}:

\begin{thm}\label{Main} 
	{\color{black} Let $d \geq 2$, $D\subset \R^d$ be a convex and bounded domain,} $\alpha \in (1,2)$, {\color{black}$a,b\geq 1$, $p \in (1,\alpha)$ as in Assumption \ref{Sup:g} and let $q \in \left(1,\frac{\alpha}{p}\right)$}. Assume that \eqref{Hg0} and \eqref{Hf0} are satisfied. Suppose that for every $\delta_{\alpha}, \delta_{\dist}, \delta_{f}, \delta_{g} \in (0,1)$ there exist ReLu DNNs $\Phi_g$, $\Phi_{\alpha}, \Phi_{\dist}$ and $\Phi_f$ satisfying Assumptions \ref{Sup:g}, \ref{Sup:D} and \ref{Sup:f}, respectively. Then there exist $\hat{B},\eta>0$ such that for every $\epsilon \in (0,1]$, the solution of \eqref{eq:LapFrac} can be approximated by a ReLu DNN $\Psi_{\epsilon}$ such that the realization $\mathcal{R}(\Psi_{\epsilon}): D \to \R$ is a continuous function which satisfies:
	\begin{enumerate}
		\item Proximity in $L^q(D)$: If $u$ is the solution of \eqref{eq:LapFrac}
		\begin{equation}
			\left(\int_D \left|u(x) - \left(\mathcal{R}(\Psi_{\epsilon})\right)(x)\right|^q dx\right)^{\frac 1q} \leq \epsilon.
		\end{equation}
		\item Bounds:
		\begin{equation}
			\mathcal{P}(\Psi_{\epsilon}) \leq \widehat{B}|D|^{\eta}d^{\eta} \epsilon^{-\eta}.
		\end{equation}
		The constant $\widehat{B}$ depends on $\norm{f}_{L^{\infty}(D)}$, the Lipschitz constants of $g$, $\mathcal{R}(\Phi_f)$ and $\mathcal{R}(\Phi_{\alpha})$, and on $\diam(D)$. {\color{black} The constant $\eta$ depends on $a,b,p$ and $q$.}
	\end{enumerate}
\end{thm}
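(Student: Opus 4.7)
The plan is to exploit the stochastic representation \eqref{eq:solfinal} and to build $\Psi_\epsilon$ by combining DNN approximations of each ingredient in a Monte Carlo version of that formula, in the spirit of the scheme in \cite{Grohs,Nico}. The whole argument is driven by the triangle-inequality decomposition
\[
u - \mathcal R(\Psi_\epsilon) = (u - u^{(N_0)}) + (u^{(N_0)} - \tilde u_M) + (\tilde u_M - \mathcal R(\Psi_\epsilon)),
\]
where $u^{(N_0)}$ denotes $u$ with $N$ capped at a deterministic level $N_0$ and $\tilde u_M$ is the empirical $M$-sample version of $u^{(N_0)}$, with the inner expectation $\E^{(\mu)}$ absorbed by one additional sample $v^{(j,n)}\sim\mu$ per WoS step.

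\textbf{Truncation and Monte Carlo.} Since $N$ is a.s.\ finite and, on a convex bounded domain, the tail $\PP_x(N>n)$ decays fast uniformly in $x\in D$, I would choose $N_0$ polylogarithmic in $\epsilon^{-1}$ so that the first term is below $\epsilon/3$ in $L^q(D)$, using the growth \eqref{H2} and $\norm{f}_{L^\infty(D)}<\infty$ to bound the relevant moments of $g(\rho_N)$ and of the source sum. Then Fubini combined with a Marcinkiewicz--Zygmund-type bound in $L^q$ gives a Monte Carlo error of order $M^{-1/2}$ with constant polynomial in $d$, so that $M\sim d^{c}\epsilon^{-2}$ keeps the second term below $\epsilon/3$.

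\textbf{DNN construction.} For each fixed realisation of the underlying randomness, the summand in $\tilde u_M$ is computed inductively through \eqref{eq:rec_rho} using only affine maps and the four functions $\dist(\cdot,\partial D)$, $x\mapsto x^\alpha$, $f$ and $g$. Replacing them by $\mathcal R(\Phi_{\dist})$, $\mathcal R(\Phi_\alpha)$, $\mathcal R(\Phi_f)$, $\mathcal R(\Phi_g)$ and invoking the standard DNN-calculus (affine layers, composition and parallelisation all preserve the class with controlled size) yields a ReLu DNN that evaluates one trajectory's summand. Parallelising over the $M$ trajectories and averaging is again a DNN, whose parameter count is, by \eqref{H3}, \eqref{HD-2}, \eqref{HD-4}, \eqref{H6}, polynomial in $M$, $N_0$, $d$ and in the reciprocal accuracies.

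\textbf{Main obstacle.} The delicate point is the propagation of the four accuracies $\delta_{\dist},\delta_\alpha,\delta_f,\delta_g$ through the recursion \eqref{eq:rec_rho}: a perturbation of $r_n$ contaminates every subsequent $\rho_k$ and is amplified geometrically in $N_0$ through the Lipschitz constants of $\mathcal R(\Phi_\alpha)$, $f$ and $g$. The whole strategy survives only because of the asymmetry between \eqref{HD-2}, which demands merely polylogarithmic size in $\delta_{\dist}^{-1}$, and \eqref{H3}, \eqref{HD-4}, \eqref{H6}, which allow polynomial dependence. Calibrating $\delta_{\dist}\sim \epsilon\, c^{-N_0}$ and $\delta_\alpha,\delta_f,\delta_g\sim\epsilon$ absorbs the geometric amplification into a polylogarithmic factor and pushes the third term below $\epsilon/3$; aggregating all constants into a single exponent $\eta$ depending only on $a,b,p,q$ then yields the claimed bound $\mathcal P(\Psi_\epsilon)\leq \widehat B |D|^\eta d^\eta \epsilon^{-\eta}$.
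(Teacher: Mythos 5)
The paper does not actually prove Theorem~\ref{Main}; it states the result and defers the proof entirely to the author's earlier preprint \cite{Nico}. There is therefore no in-paper argument against which to check your sketch line by line. Judged on its own, your plan follows the expected Monte-Carlo-over-WoS plus DNN-calculus route, and it correctly identifies the two points that the hypotheses are designed for: the heavy tail of $g(\rho_N)$, which is exactly why $p\in(1,\alpha)$ and $q\in(1,\alpha/p)$ appear (you need $\E_x|g(\rho_N)|^q\lesssim\E_x(1+|\rho_N|)^{pq}<\infty$, and $\rho_N$ has tails $\sim|y|^{-d-\alpha}$), and the geometric error amplification along the recursion \eqref{eq:rec_rho}, which is exactly what the asymmetry between the polylogarithmic bound \eqref{HD-2} and the polynomial bounds \eqref{H3}, \eqref{HD-4}, \eqref{H6} is there to absorb. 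Your calibration $\delta_{\dist}\sim\epsilon\,c^{-N_0}$ with $N_0$ polylogarithmic in $\epsilon^{-1}$ is the right mechanism, since then $\lceil\log\delta_{\dist}^{-1}\rceil^a$ stays polylogarithmic.

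Two steps that your sketch leaves implicit are genuine issues rather than bookkeeping. First, the random exit index $N$ enters both as the upper limit of the sum in \eqref{eq:sol2} and inside $g(\rho_N)$. A feed-forward ReLu network of fixed architecture cannot branch on whether $\rho_n\in D$: the indicators $\mathbf{1}_{\{\rho_n\in D\}}$ are discontinuous and are not realizations of ReLu networks. Simply running the recursion for $N_0$ steps overcounts, because $\dist(\rho_{n-1},\partial D)>0$ outside $D$ as well, so $r_n^\alpha$ does not vanish past exit; and Assumption~\ref{Sup:D} only controls the distance-DNN error on $D$, not on $D^c$. One has to either replace $\dist$ by a clipped signed distance so that the trajectory freezes after exit, or insert ReLu surrogates of the indicators and pay a boundary-layer $L^q$ penalty, and each choice has to be propagated through the $N_0$-fold composition. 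Your decomposition treats this as purely a truncation-in-$N_0$ issue, which it is not. Second, $\tilde u_M$ (and hence the network it induces) is a random object, built from the same samples; to produce one deterministic $\Psi_\epsilon$ you must show the $L^q(D)$ error is at most $\epsilon$ in expectation over the samples and then invoke the standard ``there exists a realization at least as good as the mean'' selection step. The Marcinkiewicz--Zygmund bound you cite is the right tool for the first half, but note that for the range forced by the hypotheses one always has $q=\alpha/p\cdot\theta<2$, so the $L^q$ Monte Carlo rate is $M^{-(1-1/q)}$ rather than $M^{-1/2}$; this only changes the exponent $\eta$, but the selection argument itself should appear explicitly. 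Addressing these two points would turn your outline into an essentially complete plan consistent with the strategy that Theorem~\ref{Main} inherits from \cite{Nico}.
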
 

{\color{black}\begin{rem}
	This paper tries to face this problem in a numerical way. We will propose an algorithm to find the ReLu DNN that exists from Theorem \ref{Main} without the curse of dimensionality.
\end{rem}}

\begin{rem}
	Convexity is necessary for the law of large numbers, in order to do a well approximation of the solution via Monte Carlo iterations.
\end{rem}

\section{Numerical problem modeling}\label{Sec:3}

In this section we propose an algorithm that finds a ReLu Deep Neural Network that fits the solution of \eqref{eq:LapFrac}. For this, we need preliminary work that we explain in each subsection. In particular, we need to compute efficiently each term involved in the representation \eqref{eq:solfinal}.

\subsection{Simulation of random variables}\label{SSec:3.1}

First of all, we will give an analytical value for the expression $\kappa_{d,\alpha}$ {\color{black}defined in \eqref{eq:kappa}}. The next Lemma talk about this quantity.

\begin{lem}\label{lem:kappa}
	For any $d \geq 2$ and $\alpha \in (0,2)$,
	\[
	\kappa_{d,\alpha} = \frac{2^{1-\alpha}}{\alpha} \frac{\Gamma(d/2)}{\Gamma(\alpha/2)\Gamma(d/2+\alpha/2)}.
	\]
\end{lem}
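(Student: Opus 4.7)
The plan is to start from the explicit formula for $V_1(0,dy)$ given in Theorem~\ref{teo:V10}, integrate it in radial coordinates over $B(0,1)$, and then swap the order of integration so that the remaining integral can be recognized as a Beta function.

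First, I would write $\kappa_{d,\alpha}$ as
\[
\kappa_{d,\alpha} = 2^{-\alpha}\pi^{-d/2}\frac{\Gamma(d/2)}{\Gamma(\alpha/2)^{2}} \int_{B(0,1)} |y|^{\alpha-d}\left(\int_{0}^{|y|^{-2}-1}(u+1)^{-d/2}u^{\alpha/2-1}du\right)dy,
\]
and pass to spherical coordinates $y=r\omega$ with $r\in(0,1)$ and $\omega$ on the unit sphere. The factor $|y|^{\alpha-d}$ combines with the Jacobian $r^{d-1}$ to give $r^{\alpha-1}$, and the angular integration yields the surface area $2\pi^{d/2}/\Gamma(d/2)$. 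After this reduction, the constants in front simplify to $2^{1-\alpha}/\Gamma(\alpha/2)^{2}$ times the iterated radial integral.

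Next I would apply Fubini to swap the order of integration. The domain $\{(r,u): 0<r<1,\ 0<u<r^{-2}-1\}$ is the same as $\{(r,u): 0<u<\infty,\ 0<r<(u+1)^{-1/2}\}$, so the inner radial integration $\int_{0}^{(u+1)^{-1/2}} r^{\alpha-1}dr$ produces a factor $(u+1)^{-\alpha/2}/\alpha$. This leaves
\[
\kappa_{d,\alpha}=\frac{2^{1-\alpha}}{\alpha\,\Gamma(\alpha/2)^{2}}\int_{0}^{\infty}u^{\alpha/2-1}(u+1)^{-(d+\alpha)/2}du.
\]

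Finally, I would recognize the last integral as the standard Beta integral $\int_{0}^{\infty}u^{z-1}(1+u)^{-(z+w)}du=B(z,w)$ with $z=\alpha/2$ and $w=d/2$; using $B(z,w)=\Gamma(z)\Gamma(w)/\Gamma(z+w)$ and cancelling one factor of $\Gamma(\alpha/2)$ yields exactly the claimed formula. I do not anticipate any serious obstacle: the only subtle point is the interchange of integration, which is immediate because the integrand is positive, so Tonelli applies directly.
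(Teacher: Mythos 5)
Your proof is correct, and it takes a genuinely different route from the paper's. The paper works from the reformulation in Remark~\ref{rem:V10} (with the regularized incomplete Beta function), reduces to a one-dimensional radial integral, and then evaluates the term $\int_0^1 r^{\alpha-1}\,I\!\left(r^2;\tfrac d2-\tfrac\alpha2,\tfrac\alpha2\right)dr$ by integration by parts followed by the substitution $\tilde r=r^2$, which produces a complete Beta function. You instead start directly from the double-integral form in Theorem~\ref{teo:V10}, pass to radial coordinates, swap the order of integration by Tonelli over the region $\{0<u<\infty,\ 0<r<(u+1)^{-1/2}\}$, and recognize the resulting integral $\int_0^\infty u^{\alpha/2-1}(1+u)^{-(d+\alpha)/2}du$ as the second Euler integral $B\!\left(\tfrac\alpha2,\tfrac d2\right)$. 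Both routes are sound, but yours is a bit cleaner: it avoids integration by parts and the detour through the incomplete Beta function, working purely with positive integrands so that the interchange is immediate, and it does not rely on the restated measure of Remark~\ref{rem:V10}, which was itself obtained from Theorem~\ref{teo:V10} by essentially the same kind of manipulation.
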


\begin{proof}
	From Remark \ref{rem:V10} and the definition of $\kappa_{d,\alpha}$ in \eqref{eq:kappa} is easy to see that
	\[
	\kappa_{d,\alpha} = 2^{-\alpha} \pi^{-d/2} \frac{\Gamma\left(\frac d2 - \frac \alpha 2\right)}{\Gamma \left(\frac \alpha 2\right)} \int_{B(0,1)} |y|^{\alpha-d} \left(1 - I \left(|y|^2; \frac d2 - \frac \alpha 2, \frac \alpha 2\right)\right)dy.
	\]
	Using spherical coordinates we have
	\[
	\kappa_{d,\alpha} =  2^{-\alpha} \pi^{-d/2} \frac{\Gamma\left(\frac d2 - \frac \alpha 2\right)}{\Gamma \left(\frac \alpha 2\right)} \int_{\mathbb{S}^{d-1}} \int_0^1 r^{\alpha - d} r^{d-1} \left(1 - I\left(r^2; \frac d2 -\frac \alpha 2, \frac \alpha 2\right)\right)dr dS,
	\]
	where $\mathbb{S}^{d-1}$ is the surface area of the unit $(d-1)$-sphere embedded in dimension $d$. {\color{black}This} measure is given by
	\[
	\left|\mathbb{S}^{d-1}\right| = \frac{2\pi^{d/2}}{\Gamma \left(\frac{d}{2}\right)}.
	\]
	Then 
	\[
	\begin{aligned}
		\kappa_{d,\alpha} &= 2^{-\alpha} \pi^{-d/2} \frac{\Gamma\left(\frac d2 - \frac \alpha 2\right)}{\Gamma \left(\frac \alpha 2\right)} \left|\mathbb{S}^{d-1}\right| \int_0^1 r^{d-1} \left(1 - I\left(r^2; \frac d2 - \frac \alpha 2, \frac \alpha 2\right)\right)dr \\
		&= 2^{1-\alpha} \frac{\Gamma\left(\frac d2 -\frac \alpha 2\right)}{\Gamma\left(\frac \alpha 2\right) \Gamma \left(\frac d2\right)} \left(\frac 1\alpha - \int_0^1 r^{\alpha-1} I\left(r^2;\frac d2 - \frac \alpha 2, \frac \alpha 2\right)dr\right).
	\end{aligned}
	\]
	{\color{black}To work} with the above integral, denote
	\[
	J = \int_0^1 r^{\alpha-1} I\left(r^2;\frac d2 - \frac \alpha 2, \frac \alpha 2\right)dr.
	\]
	In $J$ we can use integration by parts to obtain
	\[
	J = \frac 1\alpha - \frac{1}{\alpha B\left(\frac d2 - \frac \alpha 2, \frac \alpha 2\right)} \int_0^1 r^{d-2}(1-r^2)^{\frac \alpha 2 - 1} 2rdr.
	\]
	Then, by change of variables $\tilde{r}= r^2$; $d\tilde{r} = 2rdr$ and the definition of Beta function it follows that
	\[
	\begin{aligned}
		J &= \frac 1\alpha - \frac{1}{\alpha B\left(\frac d2 - \frac \alpha 2, \frac \alpha 2\right)} \int_0^1 \tilde{r}^{\frac d2 -1} (1-\tilde{r})^{\frac \alpha 2 - 1}d\tilde{r} \\
		&=\frac 1\alpha - \frac{B\left(\frac d2, \frac \alpha 2\right)}{\alpha B\left(\frac d2 - \frac \alpha 2, \frac \alpha 2\right)} \\
		&= \frac 1\alpha - \frac{\Gamma\left(\frac d2\right)}{\alpha\Gamma\left(\frac d2 - \frac \alpha 2\right)\Gamma\left(\frac \alpha 2\right)} \frac{\Gamma\left(\frac d2\right)\Gamma\left(\frac \alpha 2\right)}{\Gamma\left(\frac d2 + \frac \alpha 2\right)}.
	\end{aligned}
	\]
	From this we obtain
	\[
	J = \frac 1\alpha - \frac{\Gamma\left(\frac d2\right)^2}{\alpha \Gamma\left(\frac d2 - \frac \alpha 2 \right)\Gamma\left(\frac d2 + \frac \alpha 2 \right)}.
	\]
	Replacing this value in $\kappa_{d,\alpha}$ we conclude that
	\[
	\kappa_{d,\alpha} = \frac{2^{1-\alpha}}{\alpha} \frac{\Gamma\left(\frac d2 \right)}{\Gamma\left(\frac \alpha 2 \right)\Gamma\left(\frac d2 + \frac \alpha 2 \right)}.
	\]
\end{proof}

Now we will work with each random variable of the representation \eqref{eq:solfinal}. First we will give a scheme to simulate $X_{\sigma_{B(0,1)}}$ and find an algorithm to simulate a WoS process $(\rho_n)_{n=0}^{N}$ starting at a point $x \in D$. Then we make a scheme in order to do the simulations of the random variables $v$ with probability measure $\mu$. In both random variables, we will notice that the densities only depend on the radius of the initial point, and then one can simulate those random variables using the marginal radial density of each random variable.

\subsubsection{Simulation of $X_{\sigma_{B(0,1)}}$}\label{SSSec:3.1.1}

Denote by $f_{R_X}$ the marginal radial density of $X_{\sigma_{B(0,1)}}$. From Theorem \ref{teo:Xball} we can obtain $f_{R_X}(r)$ explicitly for $r>1$. Indeed, for all $r>1$ by spherical coordinates one has
\[
\begin{aligned}
	f_{R_X}(r) &= \pi^{-\left(\frac d2 + 1\right)}\Gamma\left(\frac d2\right) \sin\left(\frac{\pi\alpha}{2}\right)\int_{\mathbb{S}^{d-1}} (r^2 - 1)^{-\frac \alpha 2} r^{-d} r^{d-1} dS dr\\
	&= \pi^{-\left(\frac d2 + 1\right)}\Gamma\left(\frac d2\right) \sin\left(\frac{\pi\alpha}{2}\right) \left|\mathbb{S}^{d-1}\right| (r^2 - 1)^{-\frac \alpha 2} r^{-1} dr.
\end{aligned}
\] 
Then, for any $r>1$ 
\[
f_{R_X}(r) = \frac 2\pi \sin\left(\frac{\pi\alpha}2\right)(r^2-1)^{-\frac \alpha 2} r^{-1}dr.
\]
From the density $f_{R_X}$, one can calculate the distribution $F_{R_X}(r)$ for $r>1$. Indeed
\[
\begin{aligned}
	F_{R_X}(r) &= \int_{1}^{r} f_{R_X}(s)ds\\
	&= \frac{2}{\pi} \sin\left(\frac{\pi \alpha}{2}\right) \int_1^r (s^2-1)^{-\frac{\alpha}{2}} s^{-1}ds.
\end{aligned}
\]
By a change of variables $t = \frac{1}{s}$, one obtains
\[
\begin{aligned}
	F_{R_X}(r) &= \frac{2}{\pi} \sin\left(\frac{\pi \alpha}{2}\right) \int_{1/r}^1 \left(\frac{1}{t^2}-1\right)^{-\frac{\alpha}{2}}t^{-1}dt\\
	&= \frac{2}{\pi} \sin\left(\frac{\pi \alpha}{2}\right) \int_{1/r}^1 (1-t^2)^{-\alpha/2} t^{\alpha-1}dt.
\end{aligned}
\]
One can do a new change of variables, of the form $s=t^2$ to obtain
\[
\begin{aligned}
	F_{R_X}(r) &= \frac{2}{\pi} \sin\left(\frac{\pi \alpha}{2}\right) \int_{1/r^2}^1 (1-s)^{1-\alpha/2-1}\frac{s^{\alpha/2-1}}2 ds\\
	&= \frac{1}{\pi} \sin\left(\frac{\pi \alpha}{2}\right) B\left(\frac{\alpha}2,1-\frac{\alpha}2\right) \left(1-I\left(\frac{1}{r^2};\frac{\alpha}2,1-\frac{\alpha}2\right)\right),
\end{aligned}
\]
where we have used the definitions of Beta and Incomplete Beta functions. Therefore  $F_{R_X}(r)$ can be written, for $r>1$, as
\[
F_{R_X}(r) = \frac{1}{\pi} \sin\left(\frac{\pi \alpha}{2}\right) \Gamma\left(\frac{\alpha}2\right)\Gamma\left(1-\frac{\alpha}2\right) \left(1-I\left(\frac{1}{r^2};\frac{\alpha}2,1-\frac{\alpha}2\right)\right).
\]

Notice that $F_{R_X}(r)$ has an explicit inverse by using the well known inverse of the Incomplete Beta function $I^{-1}$. First of all notice that for all $\alpha \in (0,2)$, the Euler's reflection formula \cite{reflection} is valid:
\[
\Gamma\left(\frac{\alpha}2\right)\Gamma\left(1-\frac{\alpha}2\right) = \frac{\pi}{\sin\left(\frac{\pi \alpha}{2}\right)}.
\]
Then, $F_{R_X}(r)$ can be simply written as
\[
F_{R_X}(r) = 1 - I\left(\frac 1{r^2}; \frac{\alpha}2, 1 - \frac{\alpha}2\right).
\]
Therefore,
\[
\begin{aligned}
	u = 1 - I\left(\frac{1}{F^{-1}_{X_R}(u)^2};\frac{\alpha}2,1-\frac{\alpha}2\right)&\Longrightarrow I\left(\frac{1}{F^{-1}_{X_R}(u)^2};\frac{\alpha}2,1-\frac{\alpha}2\right) = 1 - u\\
	&\Longrightarrow \frac{1}{F^{-1}_{X_R}(u)^2} = I^{-1}\left(1-u; \frac{\alpha}2, 1- \frac{\alpha}2\right).
\end{aligned}
\]
This implies that
\begin{equation}\label{eq:F-1}
	\displaystyle F^{-1}_{X_R}(u) = \frac{1}{I^{-1}\left(1-u; \frac{\alpha}2, 1-\frac{\alpha}2\right)^{\frac 12}}.
\end{equation}
The fact that $F_{R_X}^{-1}$ can be expressed explicitly implies that a copy of a real valued random variable with density $f_{R_X}$ can be simulated as $F^{-1}_{R_X}(U)$, where $U \sim \hbox{Uniform}(0,1)$.

\medskip
By the radial symmetry of the process $X_{\sigma_{B(0,1)}}$, one can state the following algorithm for the simulation of copies of $X_{\sigma_{B(0,1)}}$:

\begin{algorithm}[]
	\caption{Simulation of copies of $X_{\sigma_{B(0,1)}}$}\label{alg:Xball}
	\begin{algorithmic}
		\State \textbf{Input:} $\alpha \in (0,2),$ $d\geq 2$
		\State \textbf{Output:} $Y$ a copy of $X_{\sigma_{B(0,1)}} \in \R^d$
		\State Simulate $U \sim \hbox{Uniform}(0,1)$
		\State Simulate $\Theta_1,...,\Theta_{d-2} \sim \text{Uniform}(0,\pi)$ and $\Theta_{d-1} \sim \text{Uniform}(0,2\pi)$
		\State $R \gets F_{R_X}^{-1}(U)$ as in \eqref{eq:F-1}
		\State $w_1 \gets \cos(\Theta_1)$
		\For{$i=2,...,d-1$}
		\State $w_i \gets \cos(\Theta_i)\prod_{k=1}^{i-1} \sin(\Theta_k)$
		\EndFor
		\State $w_d \gets \prod_{k=1}^{d-1} sin(\Theta_k)$
		\State $w \gets (w_1,...,w_d)$
		\State $Y \gets Rw$
	\end{algorithmic}
\end{algorithm}
\begin{rem}
	For Algorithm \ref{alg:Xball}, the point $w$ generated is a random point on $B(0,1)\subset \R^d$, chosen uniformly.
\end{rem}

Algorithm \ref{alg:Xball} and recursion \eqref{eq:rec_rho} allow us to write an algorithm in order to obtain a copy of the WoS process $(\rho_n)_{n=0}^{N}$, starting at a point $x \in D$.
\begin{algorithm}[]
	\caption{Simulation of copy of WoS process starting at $x \in D$}\label{alg:rho}
	\begin{algorithmic}
		\State \textbf{Input:} $\alpha \in (0,2),$ $d\geq 2$, $x \in D$
		\State \textbf{Output:} $(\rho_n)_{n=0}^{N}$ a copy of the WoS process, $(r_n)_{n=1}^{N}$ the sequence of radius
		\State $\rho_0 \gets x$
		\State $n \gets 0$
		\While{$\rho_n \in D$}
		\State $n \gets n + 1$
		\State $r_n \gets \dist(\rho_{n-1},\partial D)$
		\State Simulate $Y$ with Algorithm \ref{alg:Xball}.
		\State $\rho_n \gets \rho_{n-1} + r_n Y$
		\EndWhile
		\State $N \gets n$
	\end{algorithmic}
\end{algorithm}

\subsubsection{Simulation of $v$} 
Denote by $f_{R_v}$ the marginal radial density of the random variable $v$ with probability measure $\mu$. From Theorem \ref{teo:V10} and Remark \ref{rem:V10} we obtain $f_{R_v}(r)$ explicitly for $r<1$. Indeed, for all $r<1$, by spherical coordinates one has
\[
\begin{aligned}
	f_{R_v}(r) &= \frac{2^{-\alpha}\pi^{-\frac d2}}{\kappa_{d,\alpha}} \frac{\Gamma\left(\frac d2\right)}{\Gamma\left(\frac{\alpha}2\right)^2}B\left(\frac d2 - \frac{\alpha}2,\frac{\alpha}2\right)\int_{\mathbb{S}^{d-1}} r^{\alpha-d}\left(1-I\left(r^2;\frac d2 -\frac{\alpha}2,\frac{\alpha}2\right)\right)r^{d-1}dSdr\\
	&=  \frac{2^{-\alpha}\pi^{-\frac d2}}{\kappa_{d,\alpha}} \frac{\Gamma\left(\frac d2\right)}{\Gamma\left(\frac{\alpha}2\right)^2}B\left(\frac d2 - \frac{\alpha}2,\frac{\alpha}2\right) |\mathbb{S}^{d-1}|r^{\alpha-1}\left(1-I\left(r^2;\frac d2 -\frac{\alpha}2,\frac{\alpha}2\right)\right)dr.
\end{aligned}
\]
Recall the value of $\kappa_{d,\alpha}$ from Lemma \ref{lem:kappa} and $|\mathbb{S}^{d-1}| = \frac{2\pi^{\frac d2}}{\Gamma \left(\frac d2\right)}$. Therefore 

\[
\begin{aligned}
	f_{R_v}(r) &= \alpha\frac{\Gamma\left(\frac d2 + \frac{\alpha}2\right)}{\Gamma\left(\frac d2\right)\Gamma\left(\frac{\alpha}2\right)} B\left(\frac d2 - \frac{\alpha}2, \frac{\alpha}2\right)r^{\alpha-1}\left(1-I\left(r^2;\frac d2 -\frac{\alpha}2,\frac{\alpha}2\right)\right)dr\\
	&= \alpha \frac{B\left(\frac d2 - \frac{\alpha}2, \frac{\alpha}2\right)}{B\left(\frac d2, \frac{\alpha}2\right)} r^{\alpha-1}\left(1-I\left(r^2;\frac d2 -\frac{\alpha}2,\frac{\alpha}2\right)\right)dr.
\end{aligned}
\]
Now we can calculate the distribution $F_{R_v}(r)$ for $r<1$. Indeed
\[
\begin{aligned}
	F_{R_v}(r) &= \int_{0}^r f_{R_V}(s)ds\\
	& = \alpha \frac{B\left(\frac d2 - \frac{\alpha}2, \frac{\alpha}2\right)}{B\left(\frac d2, \frac{\alpha}2\right)} \left(\frac{r^{\alpha}}{\alpha}-\int_{0}^r s^{\alpha-1}I\left(s^2;\frac d2 -\frac{\alpha}2,\frac{\alpha}2\right)ds\right).
\end{aligned}
\]
Denote $J_1 = \int_{0}^r s^{\alpha-1}I\left(s^2;\frac d2 -\frac{\alpha}2,\frac{\alpha}2\right)ds$. We integrate by parts $J$. Following the notation $u = I\left(s^2;\frac d2 -\frac{\alpha}2,\frac{\alpha}2\right)$, with $du = \frac{2}{B\left(\frac d2 - \frac{\alpha}2, \frac{\alpha}2\right)} s^{d-\alpha-1}(1-s^2)^{\frac{\alpha}2-1}ds$ and $dv = s^{\alpha-1}ds$ with $v = \frac{s^{\alpha}}{\alpha}$, one obtains
\[
J_1 = \frac{r^{\alpha}}{\alpha}I\left(r^2;\frac d2 - \frac{\alpha}2, \frac{\alpha}2\right) - \frac{2}{\alpha B\left(\frac d2 - \frac{\alpha}2, \frac{\alpha}2\right)} \int_0^r s^{d-1}(1-s^2)^{\frac{\alpha}2-1}ds.
\]
Denote $J_2 = \int_0^r s^{d-1}(1-s^2)^{\frac{\alpha}2-1}ds$. With a change of variables $u=s^2$, $J_2$ can be written as
\[
J_2 = \frac 12 \int_0^{r^2} u^{\frac d2 - 1}(1-u)^{\frac{\alpha}2-1}du = \frac{B\left(\frac d2, \frac{\alpha}2\right)}{2} I\left(r^2;\frac d2, \frac{\alpha}2\right).
\]
Therefore
\[
J_1 = \frac{r^{\alpha}}{\alpha}I \left(r^2,\frac d2 - \frac{\alpha}2,\frac{\alpha}2\right) - \frac 1{\alpha} \frac{B\left(\frac d2, \frac{\alpha}2\right)}{B\left(\frac d2 - \frac{\alpha}2,\frac{\alpha}2\right)}I\left(r^2;\frac d2, \frac{\alpha}2\right).
\]
Finally, for $r<1$, the distribution $F_{R_{v}}(r)$ is
\[
F_{R_v}(r) = I\left(r^2;\frac d2, \frac{\alpha}2\right) + r^{\alpha}\frac{B\left(\frac d2 - \frac{\alpha}2, \frac{\alpha}2\right)}{B\left(\frac d2, \frac{\alpha}2\right)}\left(1 - I\left(r^2;\frac d2 - \frac{\alpha}2, \frac{\alpha}2\right)\right).
\]
Due the form of $F_{R_v}(r)$, it is difficult to calculate its inverse $F^{-1}_{R_v}$. Instead, by the knowledge of $f_{R_v}$ and $F_{R_v}$ we will simulate random variables with density $f_{R_v}$ using the Newton-Raphson method:

\begin{algorithm}
	\caption{Newton-Raphson algorithm}\label{alg:NR}
	\begin{algorithmic}
		\State \textbf{Input:} $r,\delta \in (0,1)$
		\State \textbf{Output:} $R$ a copy of a random variable with density $f_{R_v}$
		\State Simulate $U \sim \hbox{Uniform}(0,1)$
		\State $R \gets r$
		\While{$|F_{R_v}(R)-U|\geq \delta$}
		\State $R \gets R - \frac{F_{R_v}(R)-U}{f_{R_v}(R)}$
		\EndWhile
	\end{algorithmic}
\end{algorithm}

\medskip
It is important to recall that the tolerance $\delta$ must be small to have an improved copy. For the next numerical examples, we set $\delta = 10^{-3}$. The simulation of copies of a random variable $V$ with density $\mu$ is a quite similar to the case of $X_{\sigma_{B(0,1)}}$. We will use the same way to generate the random vector $w \in \R^d$ and the fact that the copy of $v$ is the multiplication of $R$ and $w$, with the difference that $R$ will be a copy obtained from Newton-Raphson algorithm.

\subsection{Monte Carlo training set generation}\label{SSec:3.2}

To obtain train data, we will use Monte Carlo simulations. Let $M,P \in \N$. Choose randomly $P$ points in a set {\color{black}containing} $D$, namely $(x_k)_{k=1}^{P}$. Each value $u(x_k)$ will be approximated by a Monte Carlo simulation with $M$ iterations. The advantage of use Monte Carlo in the train data is that we can create the data even when the solution $u$ is unknown.

\medskip
Denote by $\mathbb{D}$ the training set. $\mathbb{D}$ can be written as
\begin{equation}\label{eq:traindata}
	\mathbb{D} = \left\{(x_k,\hat{u}_k): k \in \{1,...,P\}\right\} \subset \R^d \times \R,
\end{equation}
where for each $k=1,...,P$, $\hat{u}_k$ has the form
\begin{equation}\label{eq:uk}
	\hat{u}_k = \begin{cases}
		\displaystyle\frac{1}{M} \sum_{i=1}^{M} \left(g\left(\rho^{i}_{N_i}\right) + \sum_{n=1}^{N_i} \kappa_{d,a} r_n^{\alpha} f\left(\rho^{i}_n + r^{i}_n v_{i}\right)\right) & \hbox{if } x_k \in D,\\
		g(x_k) & \hbox{if } x_k \notin D.
	\end{cases}
\end{equation}
Here, $\left((\rho^{i}_n,r_n^i)_{n=0}^{N_i}\right)_{i=1}^{M}$ are $M$ copies of the WoS process starting at $x_k$, with their respective radius, and $(v_i)_{i=1}^{M}$ are $M$ copies of the random variable $v$ with probability measure $\mu$.

\subsection{DNN approximation}\label{SSec:3.3}

Recall from Theorem \ref{Main} the existence of a ReLu DNN that approximates the solution of PDE \eqref{eq:LapFrac}. This DNN may have different size of the hidden layers $H$ or vector $\mathcal{D}(\Phi)$ for different settings of the problem \eqref{eq:LapFrac}.

\medskip
In the next simulations, in order to simplify the numerical computation, we set a fixed number of hidden layers $H$ and their dimensions. We will find the optimal DNN that approximates the solution $u$ of problem \eqref{eq:LapFrac} for the previous conditions. In order to do this, we set an initial ReLu DNN $\Phi$, and we train that DNN using the stochastic gradient descent (SGD) method. The training of the DNN is made with a loss function that depends on the training set defined in \eqref{eq:traindata}. In the Sections below we talk about each one in detail.

\subsubsection{DNN generation}\label{SSSec:3.3.1}
Under the notation in Section \ref{SSSec:2.1.2}, we fix $H=7$ with $k_i = 110$ for all $i=1,...,H$. This will be the number of hidden layers and the dimension of such layers. We set also $k_0 = d$ to be the dimension of the input layer and $k_{H+1}=1$ to be the dimension of the output layer. 

\medskip
With this setting, we generate an initial ReLu DNN $\Phi=((W_i,b_i))_{i=1}^{8}$, where
\begin{itemize}
	\item $W_1 \in \R^{110\times d}$, $b_1 \in \R^{110}$;
	\item For all $i=1,..,7$, $W_i \in \R^{110\times 110}$, $b_i \in \R^{110}$;
	\item $W_8 \in \R^{1\times 110}$, $b_8 \in \R$.
\end{itemize}

The values of $(W_i,b_i)_{i=1}^{8}$ are chosen in an arbitrary way, and they will become parameters of an optimization problem. The generation of the ReLu DNN in Python is made with the \emph{Tensorflow} library.

\subsubsection{SGD algorithm implementation}\label{SSSec:3.3.2}
In order to find the optimal values of the parameters $(W_i,b_i)_{i=1}^{8}$ we perform a SGD algorithm. Consider a training set with the form of \eqref{eq:traindata} of size $|\mathbb{D}|= P \in \N$ and let $N_{Iter} \in \N$, $\gamma \in (0,1)$ be the number of iterations of the SGD algorithm and its learning rate, respectively.

\medskip
For each $j = 1,...,N_{Iter}$ take $S_j$ an arbitrary subset of $\mathbb{D}$ with fixed size $|S_j| = L < P$, and consider the mean square error between $(\hat{u}_l)_{l=1}^{L}$ and $\left(\left(\mathcal{R}(\Phi)\right)(x_l)\right)_{l=1}^{L}$, where $((x_{l},\hat{u}_{l}))_{l=1}^{L}$ are the elements of $S_j$, that is
\begin{equation}\label{eq:Loss}
	\hbox{Loss}\left(\Phi,(x_l,\hat{u}_l)_{l=1}^{L}\right) = \frac{1}{L} \sum_{l=1}^{L} \left(\left(\mathcal{R}(\Phi)\right)(x_l) - \hat{u}_l \right)^2.
\end{equation}
In each step $j=1,...,N_{Iter}$, the values of weights and biases $(W_{i},b_{i})_{i=1}^{8}$ are updated via the minimization of the loss function \eqref{eq:Loss}. We will do the minimization step with the optimizer \emph{ADAM} implemented in the \emph{Tensorflow} library.

\medskip
We can state therefore the following SGD meta-algorithm in order to obtain an optimal neural network $\Phi^{*}$.

\begin{algorithm}[H]
	\caption{SGD algorithm with batches}\label{alg:SGD}
	\begin{algorithmic}
		\State \textbf{Input:} $N_{iter},M,P \in \N$, $\gamma \in (0,1)$, $L<P$, $L \in \N$
		\State \textbf{Output:} $\Phi^* := (W_i^{*},b_i^{*})_{i=1}^{8}$
		\State Create $\mathbb{D}$ of size $P$ as equal as in \eqref{eq:traindata}
		\State Initialize $\Phi := (W_i,b_i)_{i=1}^{8}$
		\For{$j=1,..,N_{Iter}$}
		\begin{itemize}
		\item Denote $S_j \subset \mathbb{D}$ as a set of $L$ random points of the training set,
		\item Calculate the loss function as in \eqref{eq:Loss} with the data set $S_j$,
		\item Update $(W_{i},b_{i})_{i=1}^{8}$ via the minimization of the loss function using the ADAM optimizer.
		\end{itemize}
		\EndFor
		\For{$i=1,...,8$}
		\State $W_i^{*} \gets W_i$
		\State $b_i^{*} \gets b_i$
		\EndFor
	\end{algorithmic}
\end{algorithm}

{\color{black}\subsubsection{Loss function for radial solutions}

For the cases where the solution is radial, i.e., it only depends on $|x|$, we perform a reinforcement of the loss function, by adding a term that controls the radiality on the realization of the obtained DNN. The loss function we will consider for the cases where the solution is radial is
\begin{equation}\label{eq:loss-radial}
\hbox{Loss}\left(\Phi,(x_l,\hat{u}_l)_{l=1}^{L}\right) = \frac{1}{2L} \sum_{l=1}^{L} \left(\left(\mathcal{R}(\Phi)\right)(x_l) - \hat{u}_l \right)^2 + \left(\left(\mathcal R(\Phi)\right)(x_{l})-\left(\mathcal R(\Phi)\right)(-x_{l})\right)^{2}.
\end{equation}
}

\subsection{Method's error estimates}\label{SSec:3.4}
In the next numerical examples, we will work with PDEs which have explicit solutions {\color{black}$v \in C(\R^d,\R)$}. For the quantification of the following errors, we sample 5000 points $x_k \in D$ or near the boundary $\partial D$. Each point will have the corresponding solution $v(x_k)$.

\medskip

We will compare the optimal neural network $\Phi^{*}$ with the solution of the sampled points in the following two ways:

\begin{enumerate}
	\item Mean Square Error:
	\begin{equation}\label{eq:MSE}
	MSE(\Phi^*,(x_k,v(x_k))_{k=1}^{5000}) = \frac{1}{5000}\sum_{k=1}^{5000} ((\mathcal{R}(\Phi^*))(x_k)-v(x_k))^{2}  .
	\end{equation}
	\item Mean {\color{black}Relative} Error:
	\begin{equation}\label{eq:AE}
	MRE(\Phi^*,(x_k,v(x_k))_{k=1}^{5000}) = \frac{1}{5000}\sum_{k=1}^{5000} \frac{|(\mathcal{R}(\Phi^*))(x_k) - v(x_k)|}{v(x_k)}.
	\end{equation}
\end{enumerate}
This comparison is made in order to study the accuracy of the trained neural network obtained with the SGD algorithm in each example.

\section{Numerical examples}\label{Sec:4}
{\color{black}All the numerical examples will be performed in Python on a 64-bit MacBook Pro M2 (2022) with 8GB of RAM. The codes are available in the author's Github: \url{https://github.com/nvalenzuelaf/DNN-Fractional-Laplacian}}\\

In this paper we provide four different examples of the fractional Dirichlet problem, and they differs in the settings of $f$ and $g$. Each example will be considered over the domain $D = B(0,1) \subset \R^d$, for different values of the dimension $d$.\\

In addition, in each example the points $x_k$ of the training set $\mathbb D$ are chosen such that $\norm{x_k} \leq 1.5$, so we are sampling points on $D$ or outside $D$ but near the boundary $\partial D$.

\subsection{Example 1: Constant source term}\label{SSec:4.1}
Let $d \geq 2$. Consider the following problem 
\[
\centering
\begin{cases}
	(-\Delta)^{\frac{\alpha}2} u(x) &= 2^{\alpha} \frac{\Gamma\left(\frac{\alpha}2 + \frac d2\right)\Gamma\left(\frac{\alpha}2 + 1\right)}{\Gamma\left(\frac d2\right)} \qquad x \in B(0,1),\\
	\hfill u(x) &= 0 \hfill x \notin B(0,1).
\end{cases}
\]
This problem has an explicit {\color{black}continuous }solution $u$ given by \cite{Ex1}.
\begin{equation}\label{eq:sol_ex1}
u(x) = (1-\norm{x}^2)_{+}^{\frac{\alpha}2},
\end{equation}
where $(\cdot)_{+}:= \max\{0,\cdot\}$. {\color{black}The solution \eqref{eq:sol_ex1} is radial, then we use the loss function defined in \eqref{eq:loss-radial} in the SGD algorithm}. For this example we may consider three different values of $d$: $d=2,5,15$. Also, we will set $N_{Iter}=1000$, $P = 2000$, $L = 400$, $M = 100$ and a learning rate $\gamma = 5\times10^{-3}$.

\medskip
First we set $d=2$. In Figure \ref{fig:2D-d2} we compare the realization of the optimal DNN with the solution \eqref{eq:sol_ex1} for $\alpha = 0.1 , 1, 1.9$ in a grid of {\color{black}5000} points between 0 and $\frac{1}{\sqrt{2}}+0.1$. The $x$-axis is the value of both components $x_1 = x_2$. In Figure \ref{fig:3D-d2} we do the comparative in 3D in a grid of {\color{black}$1000\times1000$} points in $[-1,1]\times[-1,1]$. The $x$-axis is the value of $x_1$ and the $y$-axis is the value of $x_2$.

\medskip
In both Figures \ref{fig:2D-d2} and \ref{fig:3D-d2} we see that the optimal neural network fits pretty well the solution \eqref{eq:sol_ex1}. For the case of $\alpha=0.1$ the algorithm takes {\color{black} $22.02$} seconds to obtain the optimal DNN. When $\alpha = 1$ it takes {\color{black}$21.38$} seconds and for the case $\alpha = 1.9$ it takes {\color{black}1 minute and 23.11 seconds}.


\begin{figure}[H]
	\centering
	\begin{subfigure}[b]{0.4\textwidth}
		\centering
		\includegraphics[width=\textwidth]{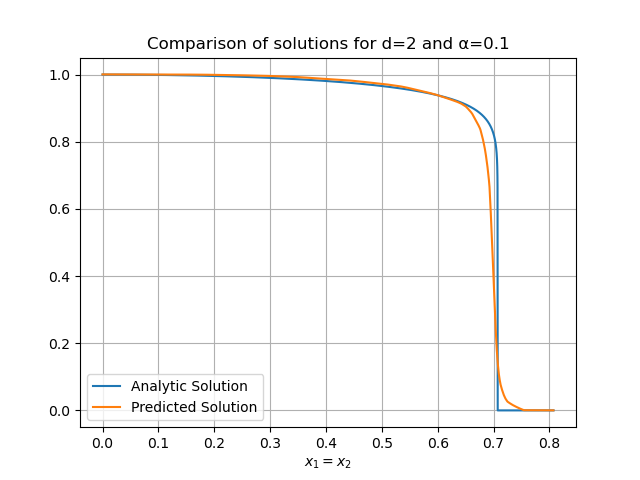}
		\caption{$\alpha = 0.1$}
		\label{fig:d2a01}
	\end{subfigure}
	\begin{subfigure}[b]{0.4\textwidth}
		\centering
		\includegraphics[width=\textwidth]{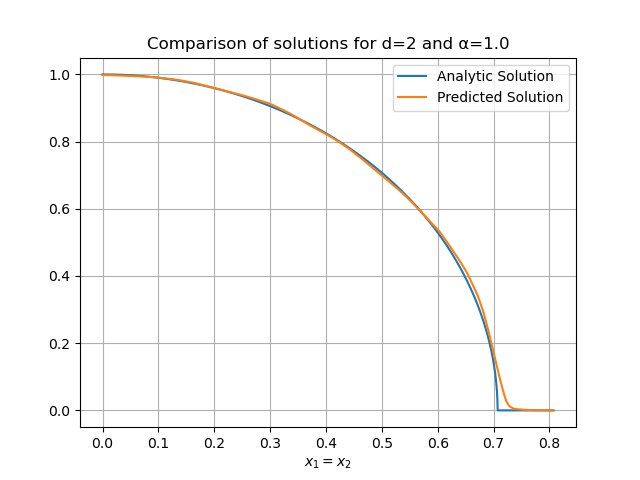}
		\caption{$\alpha = 1$}
		\label{fig:d2a10}
	\end{subfigure}
	\begin{subfigure}[b]{0.4\textwidth}
		\centering
		\includegraphics[width=\textwidth]{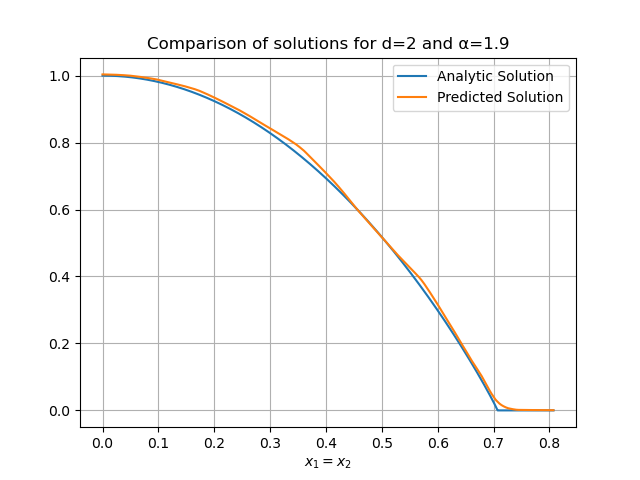}
		\caption{$\alpha = 1.9$}
		\label{fig:d2a19}
	\end{subfigure}
	\caption{Two dimensional comparison of DNN and exact solution when $d=2$.}
\label{fig:2D-d2}
\end{figure}

\begin{figure}[H]
\centering
\begin{subfigure}[b]{0.4\textwidth}
	\centering
	\includegraphics[width=\textwidth]{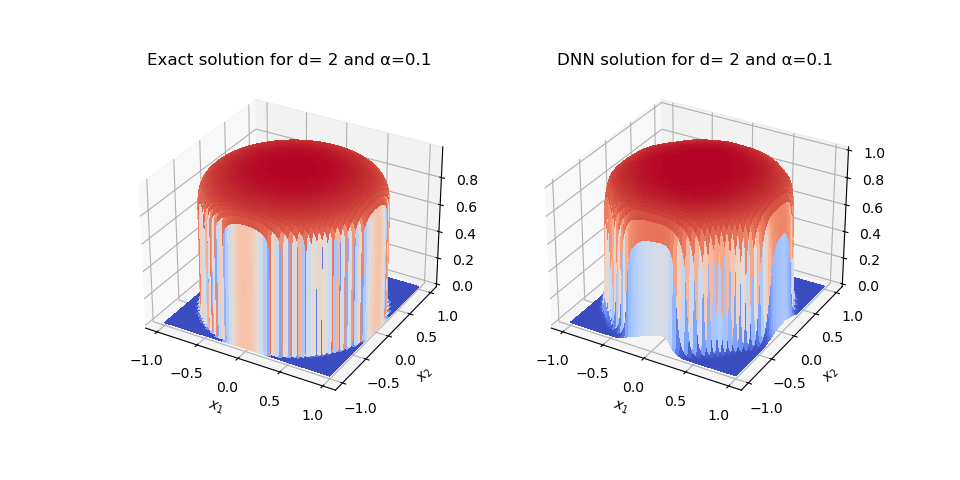}
	\caption{$\alpha = 0.1$}
	\label{fig:3Dd2a01}
\end{subfigure}
\begin{subfigure}[b]{0.4\textwidth}
	\centering
	\includegraphics[width=\textwidth]{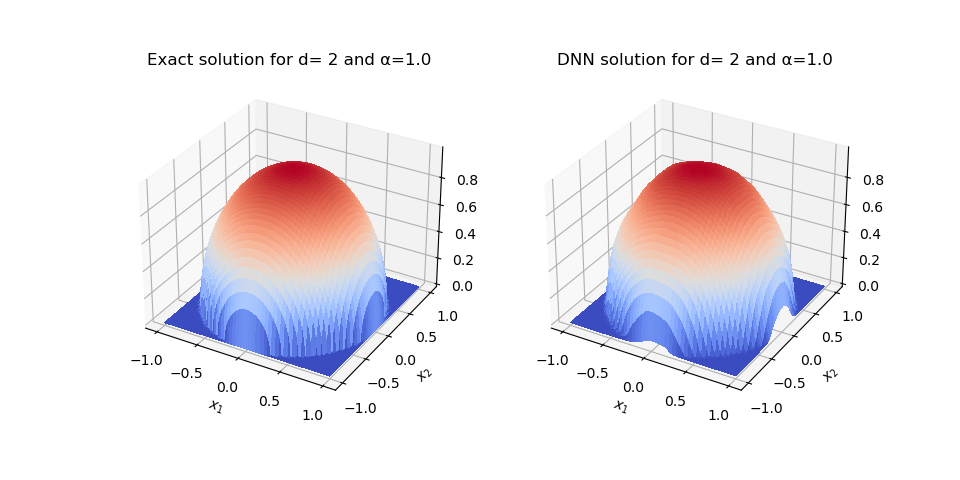}
	\caption{$\alpha = 1$}
	\label{fig:3Dd2a10}
\end{subfigure}
\begin{subfigure}[b]{0.4\textwidth}
	\centering
	\includegraphics[width=\textwidth]{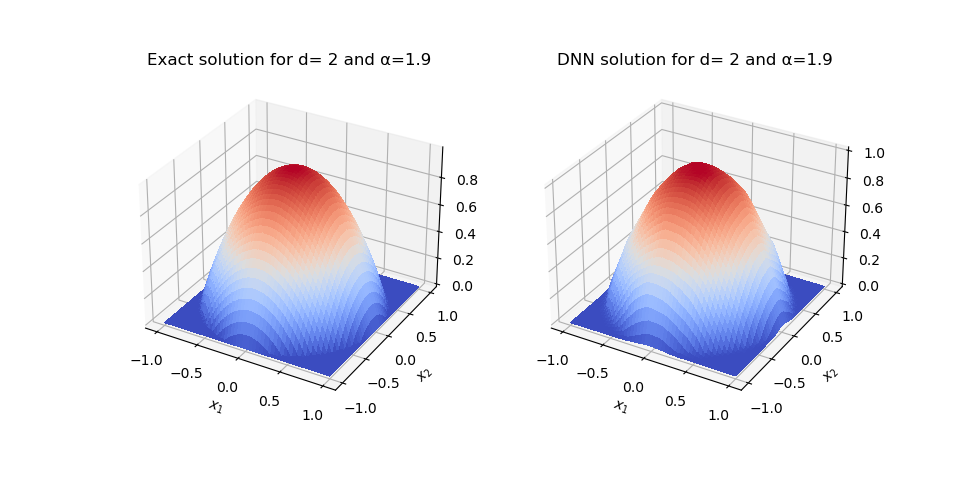}
	\caption{$\alpha = 1.9$}
	\label{fig:3Dd2a19}
\end{subfigure}
\caption{Three dimensional comparison of DNN and exact solution for $d=2$. In each Figure, left: exact solution, right: DNN solution.}
\label{fig:3D-d2}
\end{figure}

{\color{black}In Figure \ref{fig:2D-d2} the most visible difference between the DNN and the solution \eqref{eq:sol_ex1} is near the boundary $\partial B(0,1)$, principally in the cases $\alpha=0.1$ and $\alpha=1.0$. This can also be seen in Figure \ref{fig:3D-d2},  in the points where one component is near zero and the other component is near 1 or -1.}


\begin{figure}[H]
	\centering
	\begin{subfigure}[b]{0.4\textwidth}
		\centering
		\includegraphics[width=\textwidth]{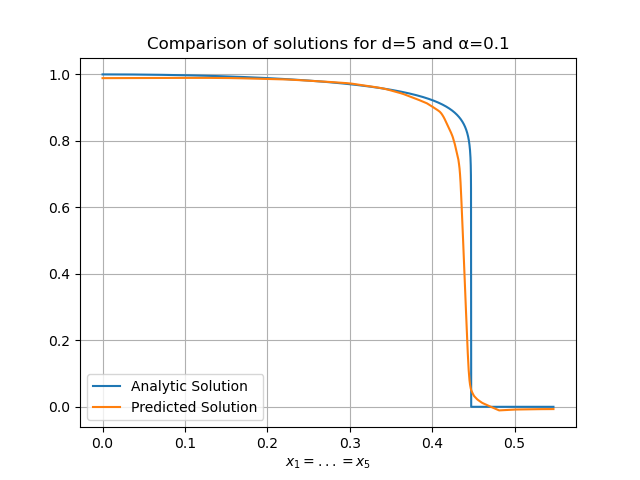}
		\caption{$\alpha = 0.1$}
		\label{fig:d5a01}
	\end{subfigure}
	\begin{subfigure}[b]{0.4\textwidth}
		\centering
		\includegraphics[width=\textwidth]{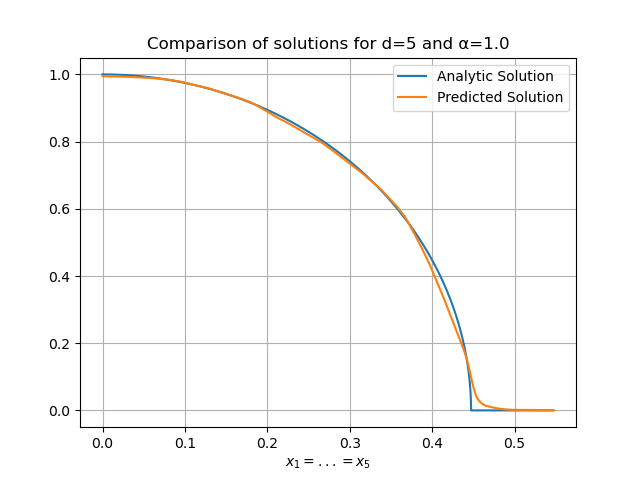}
		\caption{$\alpha = 1$}
		\label{fig:d5a10}
	\end{subfigure}
	\begin{subfigure}[b]{0.4\textwidth}
		\centering
		\includegraphics[width=\textwidth]{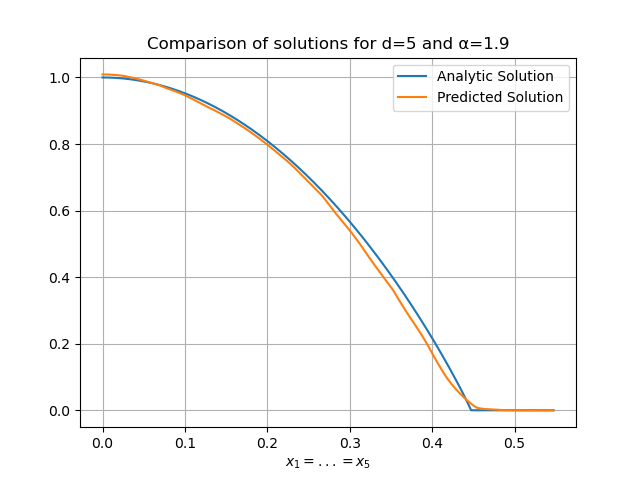}
		\caption{$\alpha = 1.9$}
		\label{fig:d5a19}
	\end{subfigure}
	\caption{Two dimensional comparison of DNN and exact solution when $d=5$.} 
\label{fig:2D-d5}
\end{figure}

\medskip
Now we set $d=5$. Figure \ref{fig:2D-d5} shows the comparative between the optimal DNN and the solution \eqref{eq:sol_ex1} for $\alpha = 0.1, 1, 1.9$ in a grid of {\color{black}5000} points between 0 and $\frac 1{\sqrt 5}+0.1$. The $x$-axis is the value of the components $x_1 = ... = x_5$. Figure \ref{fig:3D-d5} shows the comparative in 3D in a grid of {\color{black}1000$\times$1000} points in $[-1,1]\times[-1,1]$. The $x$-axis is the value of $x_1=...=x_4$ and the $y$-axis is the value of $x_5$.

\begin{figure}[H]
\centering
\begin{subfigure}[b]{0.4\textwidth}
	\centering
	\includegraphics[width=\textwidth]{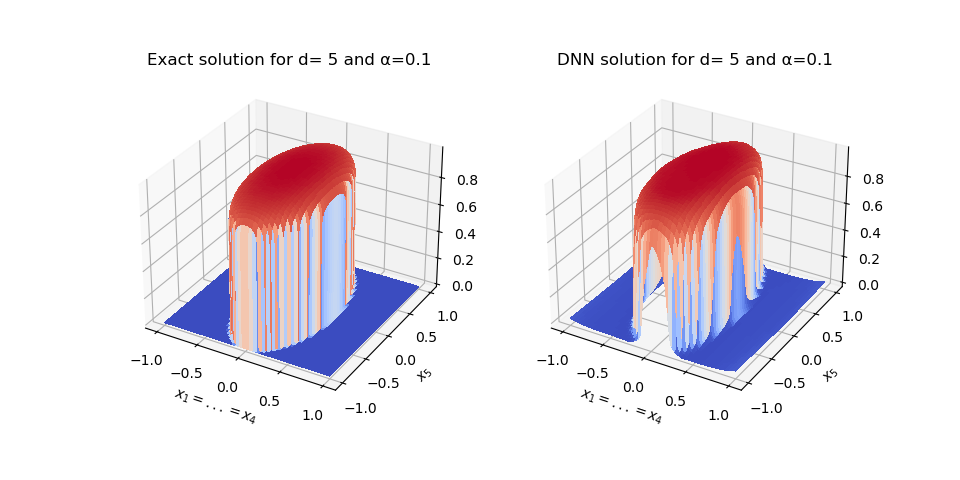}
	\caption{$\alpha = 0.1$}
	\label{fig:3Dd5a01}
\end{subfigure}
\begin{subfigure}[b]{0.4\textwidth}
	\centering
	\includegraphics[width=\textwidth]{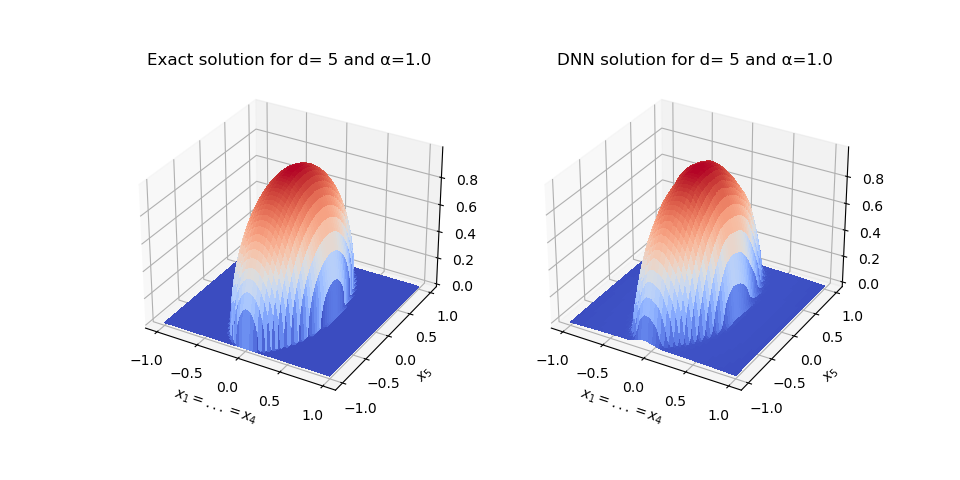}
	\caption{$\alpha = 1$}
	\label{fig:3Dd5a10}
\end{subfigure}
\begin{subfigure}[b]{0.4\textwidth}
	\centering
	\includegraphics[width=\textwidth]{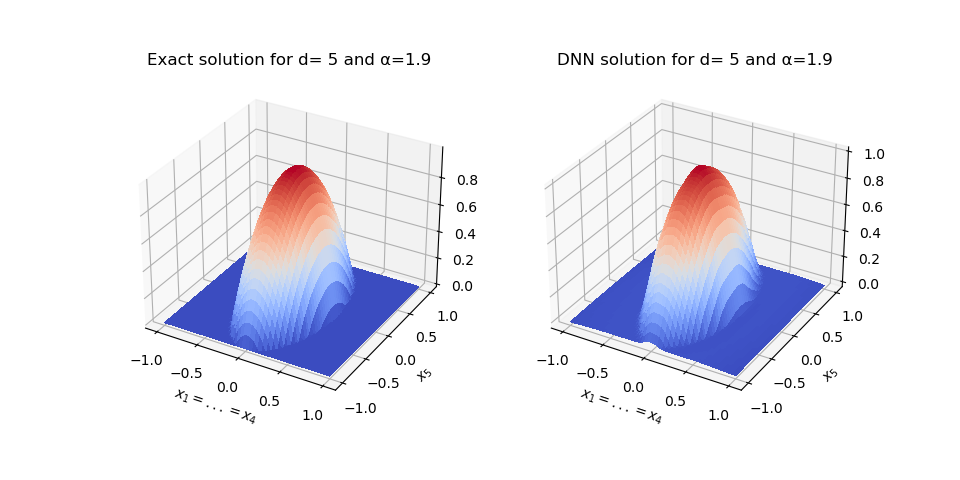}
	\caption{$\alpha = 1.9$}
	\label{fig:3Dd5a19}
\end{subfigure}
\caption{Three dimensional comparison of DNN and exact solution for $d=5$. In each Figure, left: exact solution, right: DNN solution.}
\label{fig:3D-d5}
\end{figure}

\medskip
As we can see in Figure \ref{fig:2D-d5} the approximation via DNN is good. The results {\color{black}have} the most visible difference between the prediction and the solution is near the boundary $\partial B(0,1)$ for the {\color{black}three cases}. In Figure \ref{fig:3D-d5} we see the same behavior that in the case $d=2$, that is, near 0 in the $x$-axis and near $\{-1,1\}$ in the $y$-axis there are a notable difference between the optimal DNN and the solution \eqref{eq:sol_ex1}.


\medskip
In order to improve the the approximation, we {\color{black}perform} a study of the algorithm when we increase the number of iterations $M$ of the Monte Carlo simulation and the size of the training set $P$. In particular, for the same values of $\alpha$ as before, we calculate the elapsed time of the algorithm when $M=10,100,1000,2000$ and $P=10,100,1000,2000$. The results are shown in seconds in Tables \ref{tab:ex1a01}, \ref{tab:ex1a10} and \ref{tab:ex1a19}.

\begin{table}[h]
    \begin{subtable}[h]{0.45\textwidth}
        \centering
        \begin{tabular}{|l|l|l|l|l|}
		\hline
		M\textbackslash{}P & 10    & 100   & 1000   & 2000   \\ \hline
		10                 & 5.44  & 5.62  & 8.35   & 10.49  \\ \hline
		100                & 5.45  & 6.25  & 14.13  & 22.26  \\ \hline
		1000               & 5.94  & 12.46 & 70.90 & 139.47 \\ \hline
		2000               & 6.61 & 17.29 & 140.75 & 274.69 \\ \hline
       \end{tabular}
       \caption{$\alpha = 0.1$}
       \label{tab:ex1a01}
    \end{subtable}
    \hfill
    \begin{subtable}[h]{0.45\textwidth}
        \centering
\begin{tabular}{|l|l|l|l|l|}
	\hline
	M\textbackslash{}P & 10    & 100   & 1000   & 2000    \\ \hline
	10                 & 5.50 & 5.76 & 8.67  & 10.90   \\ \hline
	100                & 5.54  & 6.56 & 15.13  & 23.54   \\ \hline
	1000               & 6.47 & 13.32 & 79.53 & 151.38  \\ \hline
	2000               & 7.00 & 20.19 & 151.30 & 294.83 \\ \hline
\end{tabular}
        \caption{$\alpha = 1.0$}
        \label{tab:ex1a10}
     \end{subtable}
     
     \medskip
     \begin{subtable}[h]{0.45\textwidth}
        \centering
\begin{tabular}{|l|l|l|l|l|}
\hline
M\textbackslash{}P & 10    & 100    & 1000    & 2000     \\ \hline
10                 & 5.60  & 6.72  & 17.86   & 29.27    \\ \hline
100                & 6.09 & 14.40  & 101.61  & 203.21   \\ \hline
1000               & 18.27 & 101.77 & 981.54 & 2012.45  \\ \hline
2000               & 22.54 & 197.30 & 2012.85 & 5556.19 \\ \hline
\end{tabular}
       \caption{$\alpha = 1.9$}
       \label{tab:ex1a19}
    \end{subtable}
     \caption{Example 1: Elapsed time in seconds of the SGD algorithm with $d=5$}
     \label{tab:ex1}
\end{table}

As we can see in the three tables, the setting that takes the longest elapsed time is when $M=2000$ and $P=2000$. This seems reasonable because we need to do a more expensive Monte Carlo simulation for a larger number of points in $\R^5$.

\medskip
Another relationship between elapsed time and PDE parameters can be seen in Tables \ref{tab:ex1a01}, \ref{tab:ex1a10} and \ref{tab:ex1a19}. In particular, for small values of $\alpha$ the algorithm takes less time than when $\alpha$ is larger.

\medskip
Also, from Tables \ref{tab:ex1a01}, \ref{tab:ex1a10} and \ref{tab:ex1a19}, we can deduce that for fixed $M$, the value of the elapsed time when $P$ increases is similar to the value of the elapsed time when $M$ increases and $P$ is fixed.

\medskip
Now, for the same values of $M$ and $P$, and for $\alpha = 0.1,0.5,1,1.5,1.9$ the Figure \ref{fig:MSEP} shows the mean square error calculated as in \eqref{eq:MSE} for $5000$ random points in $D$. On the other hand, Figure \ref{fig:AEP} shows the mean {\color{black}relative} error calculated as in \eqref{eq:AE} for the same $5000$ points in $D$.

\medskip
\begin{figure}[H]
	\centering
	\begin{subfigure}[b]{0.4\textwidth}
		\centering
		\includegraphics[width=\textwidth]{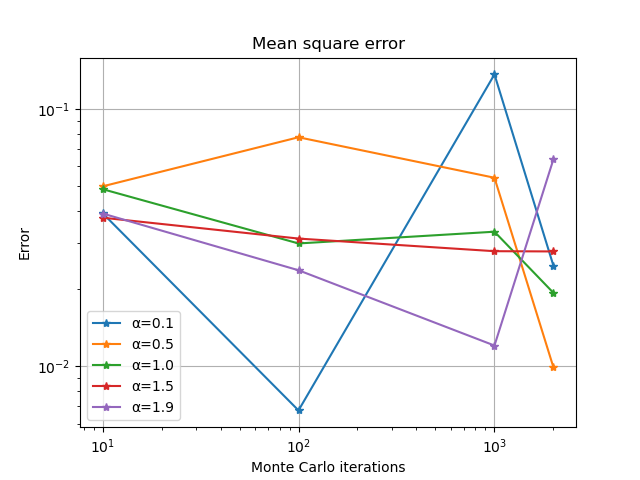}
		\caption{$P = 10$}
		\label{fig:MSEP10d5}
	\end{subfigure}
	\begin{subfigure}[b]{0.4\textwidth}
		\centering
		\includegraphics[width=\textwidth]{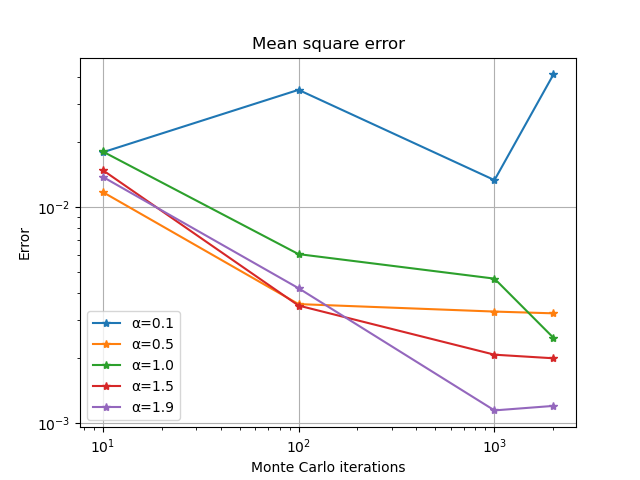}
		\caption{$P = 100$}
		\label{fig:MSEP100d5}
	\end{subfigure}
	\begin{subfigure}[b]{0.4\textwidth}
		\centering
		\includegraphics[width=\textwidth]{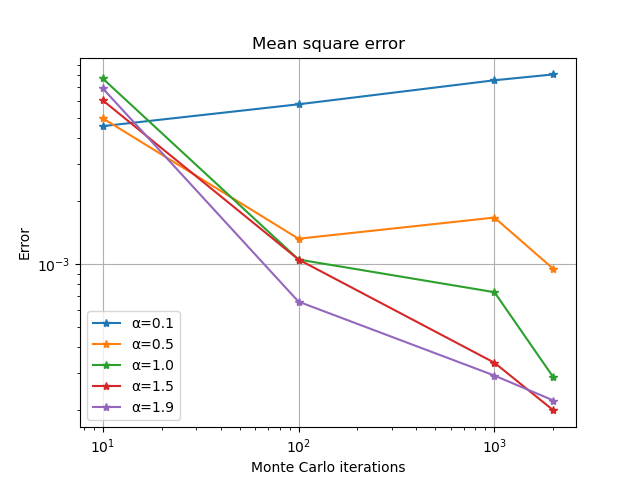}
		\caption{$P = 1000$}
		\label{fig:MSEP1000d5}
	\end{subfigure}
	\centering
	\begin{subfigure}[b]{0.4\textwidth}
		\centering
		\includegraphics[width=\textwidth]{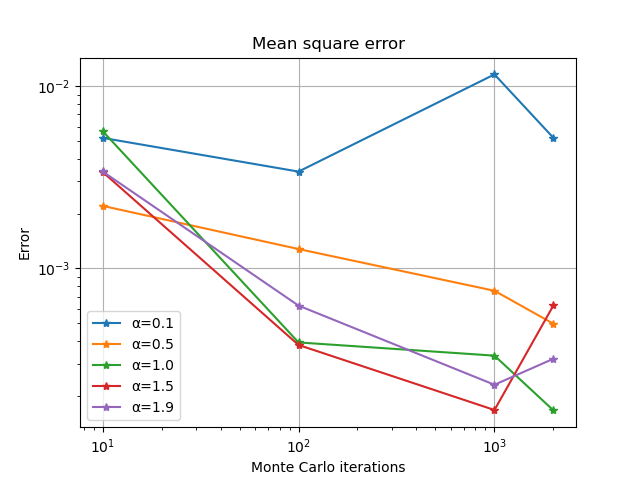}
		\caption{$P = 2000$}
		\label{fig:MSEP2000d5}
	\end{subfigure}
	\caption{Mean square error against number of Monte Carlo iterations $M$ for 5 values of $\alpha$.}
	\label{fig:MSEP}
\end{figure}

\begin{figure}[H]
	\centering
	\begin{subfigure}[b]{0.4\textwidth}
		\centering
		\includegraphics[width=\textwidth]{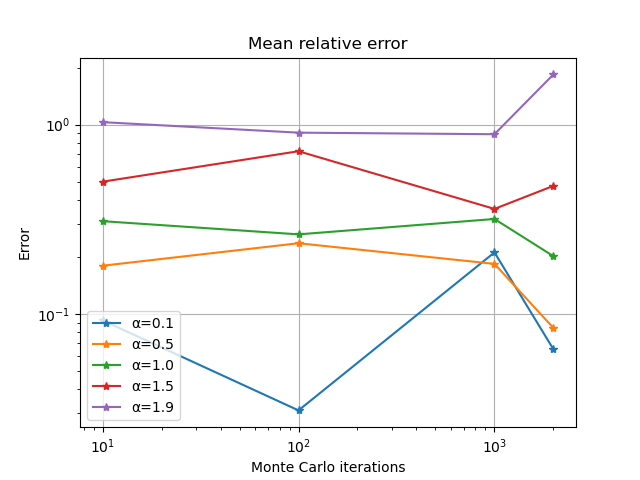}
		\caption{$P = 10$}
		\label{fig:AEP10d5}
	\end{subfigure}
	\begin{subfigure}[b]{0.4\textwidth}
		\centering
		\includegraphics[width=\textwidth]{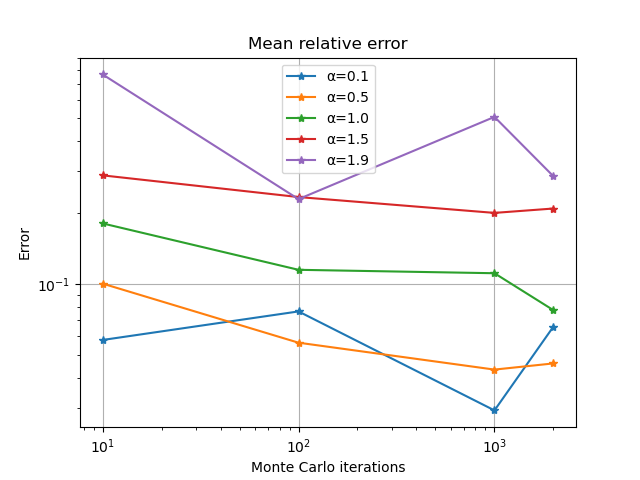}
		\caption{$P = 100$}
		\label{fig:AEP100d5}
	\end{subfigure}
	\hfill
	\begin{subfigure}[b]{0.4\textwidth}
		\centering
		\includegraphics[width=\textwidth]{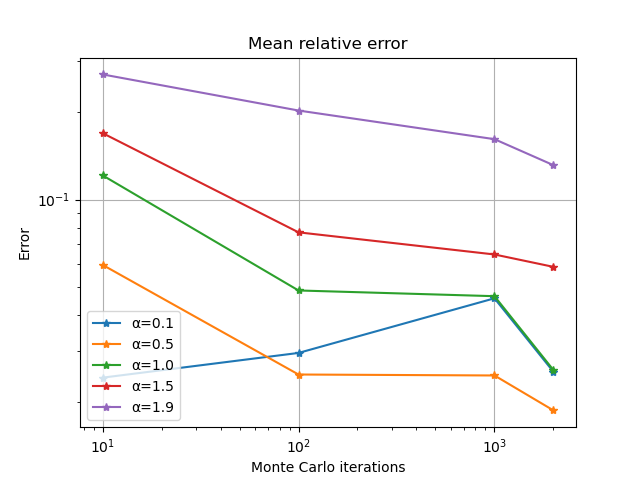}
		\caption{$P = 1000$}
		\label{fig:AEP1000d5}
	\end{subfigure}
	\centering
	\begin{subfigure}[b]{0.4\textwidth}
		\centering
		\includegraphics[width=\textwidth]{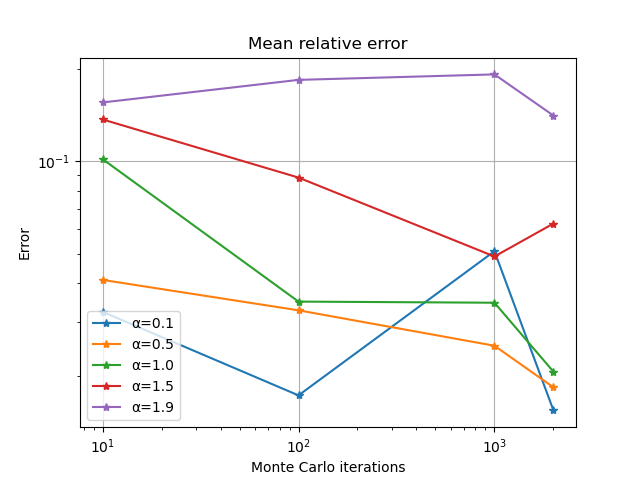}
		\caption{$P = 2000$}
		\label{fig:AEP2000d5}
	\end{subfigure}
	\caption{Mean {\color{black}relative} error against number of Monte Carlo iterations $M$ for 5 values of $\alpha$.}
	\label{fig:AEP}
\end{figure}

{\color{black}\medskip
Notice from Figure \ref{fig:MSEP} that for small values of $\alpha$, one needs big values of $P$. For example, when $\alpha = 0.1$, $P = 1000$ is not enough to have a mean square error of the order of $10^{-4}$, even with $M=2000$. On the other hand, when $\alpha = 1.9$, $P=1000$ gives the order of $10^{-4}$ on the mean square error for $M\geq100$ . Also notice that, despite that the high elapsed time when $P=2000$, in almost all the choices of $M$ and $\alpha$, this setting ensures a mean square error of the order of $10^{-3}$, and the order of $10^{-4}$ if $M$ is greater than $100$.}


{\color{black}\medskip
In Figure \ref{fig:AEP} we see that for the mean relative error is more difficult to obtain a value small order. In particular, only for $P\geq1000$ we can ensure an error lower than the order $10^{-1}$ for almost all the choices of $M$ and $\alpha$. The excluding case is when $\alpha = 1.9$, where the errors are greater than $10^{-1}$, but they are still in that order.}


\medskip
Finally, we set $d=15$ {\color{black}and we change the value of $\gamma$ to $ 5 \times 10^{-4}$.} In Figure \ref{fig:2D-d15} we compare the realization of the optimal DNN with the solution \eqref{eq:sol_ex1} for $\alpha=0.1,1,1.9$ in a grid of {\color{black}5000} points between 0 and $\frac{1}{\sqrt{15}}+0.1$. The $x$-axis is the value of the components $x_1 = ... = x_{15}$. In Figure \ref{fig:3D-d15} we do the comparative in 3D in a grid of {\color{black}1000$\times$1000} points in $[-1,1]\times[-1,1]$. The $x$-axis is the value of $x_1=...=x_{14}$ and the $y$-axis is the value of $x_{15}$.

\begin{figure}[H]
	\centering
	\begin{subfigure}[b]{0.4\textwidth}
		\centering
		\includegraphics[width=\textwidth]{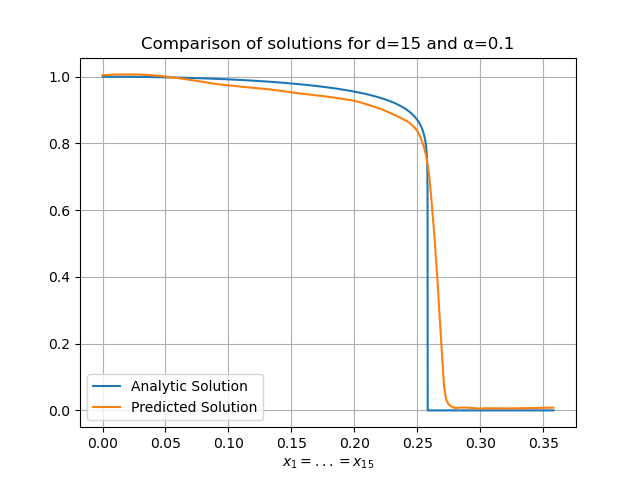}
		\caption{$\alpha = 0.1$}
		\label{fig:d15a01}
	\end{subfigure}
	\begin{subfigure}[b]{0.4\textwidth}
		\centering
		\includegraphics[width=\textwidth]{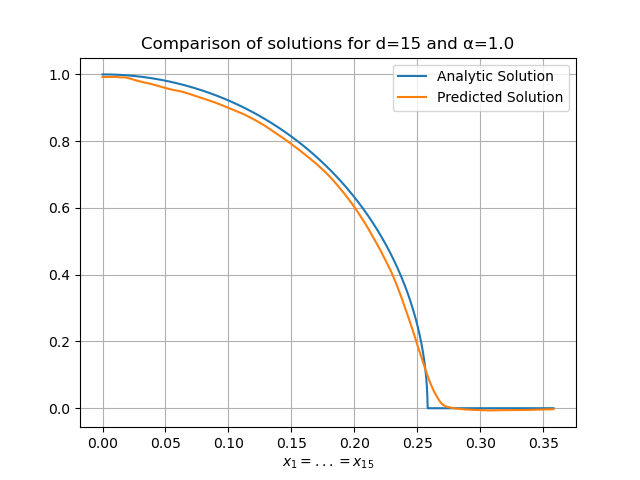}
		\caption{$\alpha = 1$}
		\label{fig:d15a10}
	\end{subfigure}
	\begin{subfigure}[b]{0.4\textwidth}
		\centering
		\includegraphics[width=\textwidth]{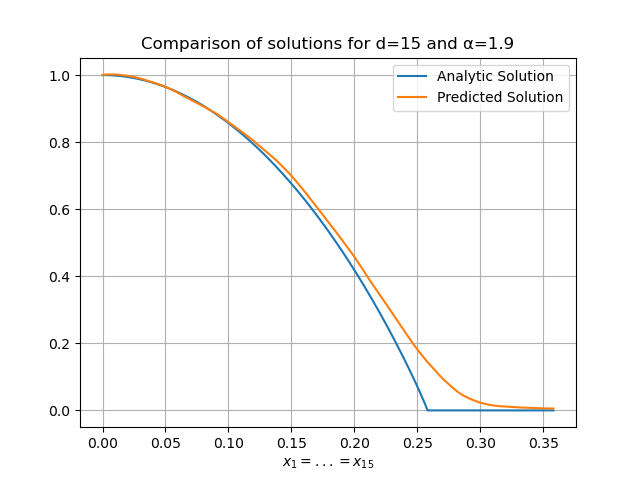}
		\caption{$\alpha = 1.9$}
		\label{fig:d15a19}
	\end{subfigure}
	\caption{Two dimensional comparison of DNN and exact solution when $d=15$.} 
\label{fig:2D-d15}
\end{figure}

\begin{figure}[H]
\centering
\begin{subfigure}[b]{0.4\textwidth}
	\centering
	\includegraphics[width=\textwidth]{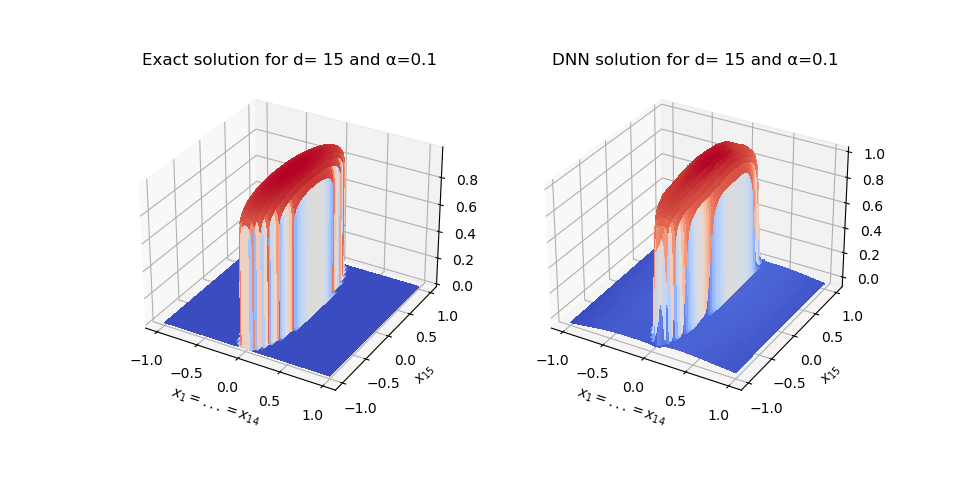}
	\caption{$\alpha = 0.1$}
	\label{fig:3Dd15a01}
\end{subfigure}
\begin{subfigure}[b]{0.4\textwidth}
	\centering
	\includegraphics[width=\textwidth]{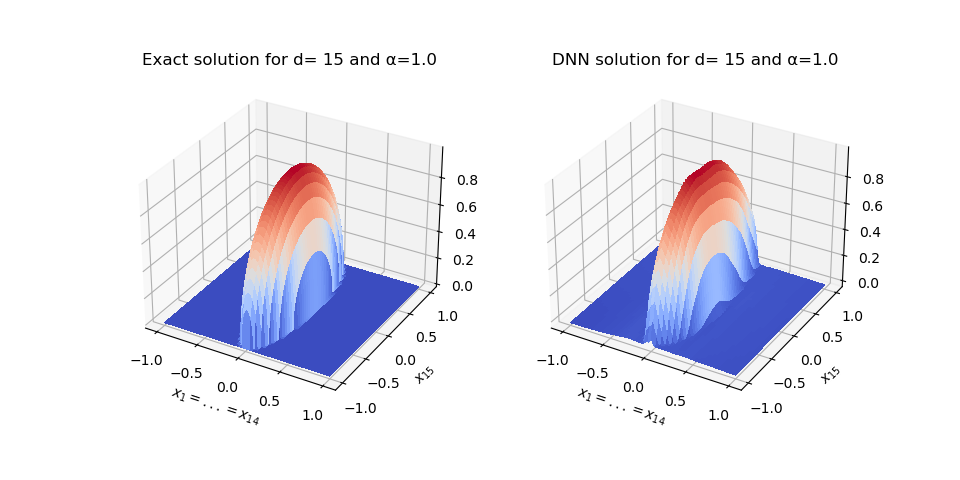}
	\caption{$\alpha = 1$}
	\label{fig:3Dd15a10}
\end{subfigure}
\begin{subfigure}[b]{0.4\textwidth}
	\centering
	\includegraphics[width=\textwidth]{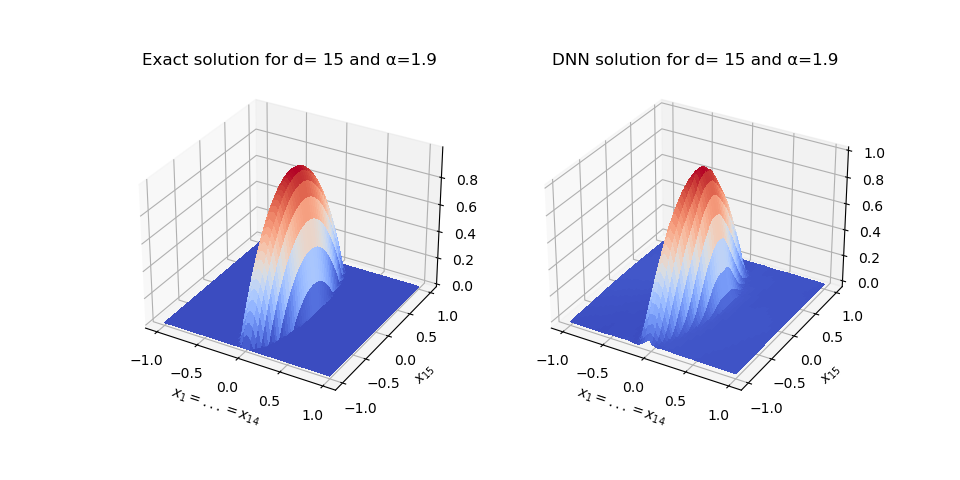}
	\caption{$\alpha = 1.9$}
	\label{fig:3Dd15a19}
\end{subfigure}
\caption{Three dimensional comparison of DNN and exact solution for $d=15$. In each Figure, left: exact solution, right: DNN solution.}
\label{fig:3D-d15}
\end{figure}

{\color{black}\medskip
Notice from Figure \ref{fig:2D-d15} that the behavior is a quite similar that when $d=5$. In the sense that for all the values of $\alpha$, the optimal DNN is not a good prediction of the solution near the boundary $\partial B(0,1)$, specially when $\alpha=0.1$ and $\alpha=1.9$. Despite the not-so-good approximation mentioned above, in Figure \ref{fig:3D-d15} we see that the optimal DNN still preserves the {\color{black} solution's form.}}


{\color{black}\subsubsection{Comparison of loss functions} Recall from \eqref{eq:sol_ex1} the radiality of the solution in this example. As said before, in previous figures we perform the SGD algorithm with the loss function defined in \eqref{eq:loss-radial}. In this section we compare the optimal DNNs of the SGD algorithm for the two different losses described in Section \ref{Sec:4}. The comparison will be made for $\alpha=0.1,1,1.9$, $d=15$, $\gamma=5\times10^{-4}$, $P=2000$, $L=400$ and $M=100$. \\

\medskip
As we can see in Figure \ref{fig:2D-d15-radial}, the performance of the SGD algorithm is better when we are using the loss function \eqref{eq:loss-radial} instead of function \eqref{eq:Loss}. Despite there are not so much difference between the errors obtained with both loss functions, in the visualization on Figure \ref{fig:2D-d15-radial} is remarkable the use of the loss function with radial term. 

\begin{figure}[H]
	\centering
	\begin{subfigure}[b]{0.4\textwidth}
		\centering
		\includegraphics[width=\textwidth]{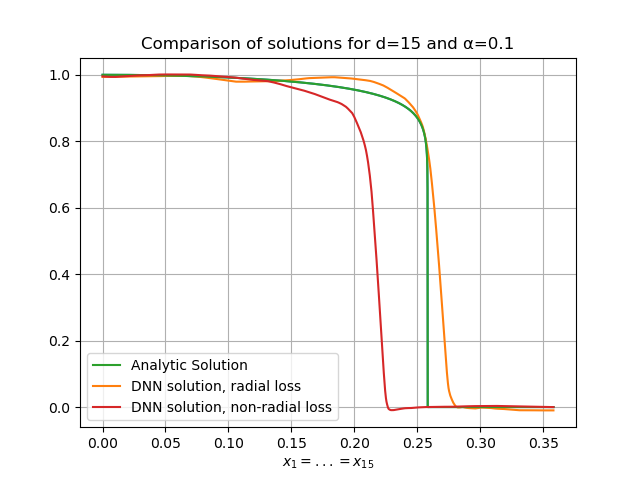}
		\caption{$\alpha = 0.1$}
		\label{fig:d15a01rs}
	\end{subfigure}
	\begin{subfigure}[b]{0.4\textwidth}
		\centering
		\includegraphics[width=\textwidth]{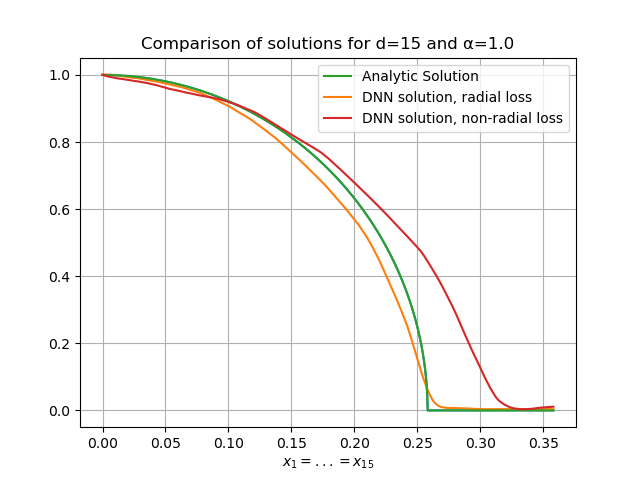}
		\caption{$\alpha = 1$}
		\label{fig:d15a10rs}
	\end{subfigure}
	\begin{subfigure}[b]{0.4\textwidth}
		\centering
		\includegraphics[width=\textwidth]{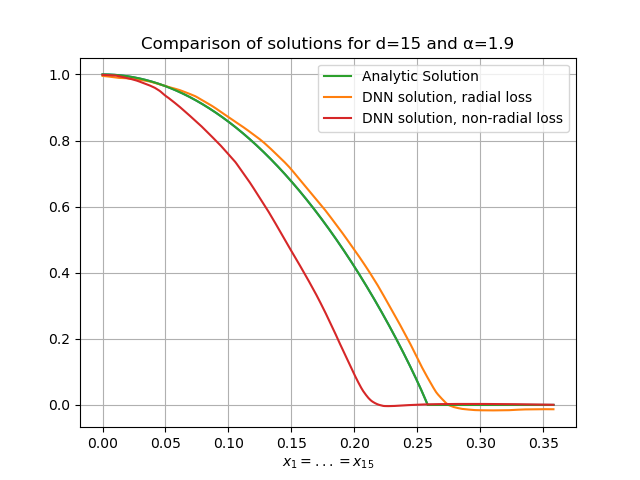}
		\caption{$\alpha = 1.9$}
		\label{fig:d15a19rs}
	\end{subfigure}
	\caption{Two dimensional comparison of two optimal DNN solutions with different loss functions, for $d=15$.} 
\label{fig:2D-d15-radial}
\end{figure}
}

\subsection{Example 2: Non constant source term}

Let $d \geq 2$. Consider the following problem
\begin{equation}\label{eq:sol_ex2}
\centering
\begin{cases}
	(-\Delta)^{\frac{\alpha}2} u(x) &= b_{\alpha,d}\left(1-\left(1+\frac{\alpha}d\right)|x|^2\right) \qquad x \in B(0,1),\\
	\hfill u(x) &= 0 \hfill x \notin B(0,1),
\end{cases}
\end{equation}
where 
\[
b_{\alpha,d} =  2^{\alpha} \frac{\Gamma\left(\frac{\alpha}2 + \frac d2\right)\Gamma\left(\frac{\alpha}2 + 2\right)}{\Gamma\left(\frac d2\right)}.
\]
This problem has an explicit {\color{black}continuous }solution $u$ given by \cite{Ex2}.
\[
u(x) = (1-\norm{x}^2)_{+}^{1+\frac{\alpha}2}.
\]
For this example, {\color{black}is also radial, then the loss function \eqref{eq:loss-radial} will be used}. We consider $d=2$ and $d=5$. For both dimensions, we set $M=500$, $P=1000$, $L=200$, $\gamma = 5 \times 10^{-3}$ and $N_{Iter} = 1000$. 

\medskip
First we set $d=2$. In Figure \ref{fig:ex3-2D-d2} we compare the realization of the optimal DNN with the solution \eqref{eq:sol_ex2} for $\alpha = 0.5,1.5,1.9$ in a grid of {\color{black}5000} points between 0 and $\frac 1{\sqrt 2} + 0.1$. Figure \ref{fig:ex3-3D-d2} we do the comparative in 3D in a grid of {\color{black}1000$\times$1000} points in $[-1,1]\times[-1,1]$.

\begin{figure}[H]
	\centering
	\begin{subfigure}[b]{0.4\textwidth}
		\centering
		\includegraphics[width=\textwidth]{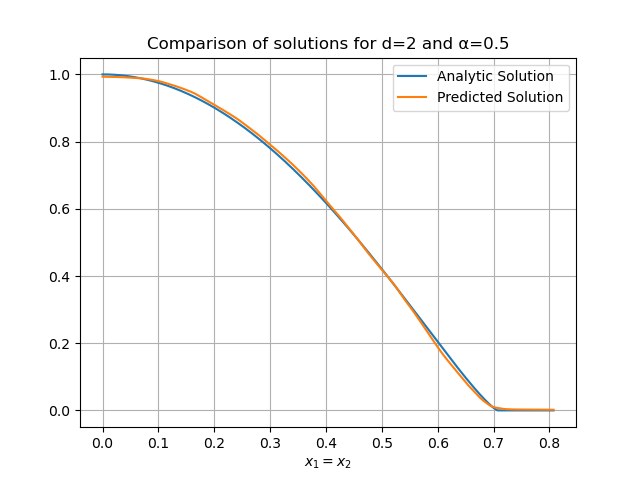}
		\caption{$\alpha = 0.5$}
		\label{fig:ex3d2a05}
	\end{subfigure}
	\begin{subfigure}[b]{0.4\textwidth}
		\centering
		\includegraphics[width=\textwidth]{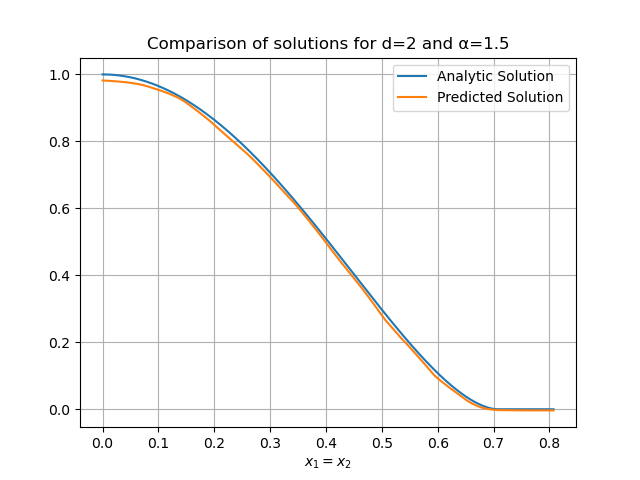}
		\caption{$\alpha = 1.5$}
		\label{fig:ex3d2a15}
	\end{subfigure}
	\begin{subfigure}[b]{0.4\textwidth}
		\centering
		\includegraphics[width=\textwidth]{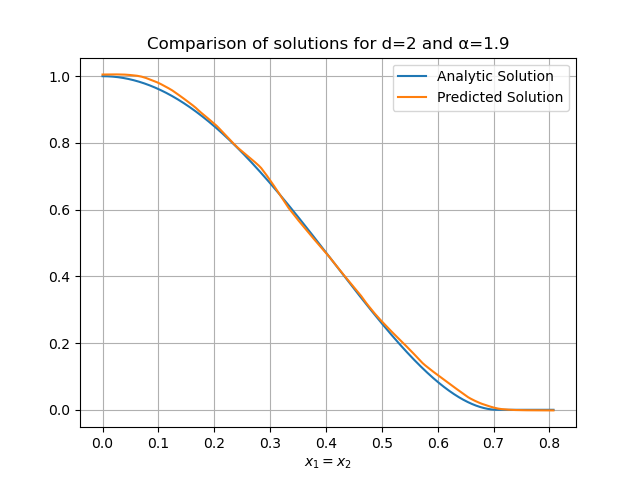}
		\caption{$\alpha = 1.9$}
		\label{fig:ex3d2a19}
	\end{subfigure}
	\caption{Two dimensional comparison of DNN and exact solution when $d=2$.}
\label{fig:ex3-2D-d2}
\end{figure}

\begin{figure}[H]
\centering
\begin{subfigure}[b]{0.4\textwidth}
	\centering
	\includegraphics[width=\textwidth]{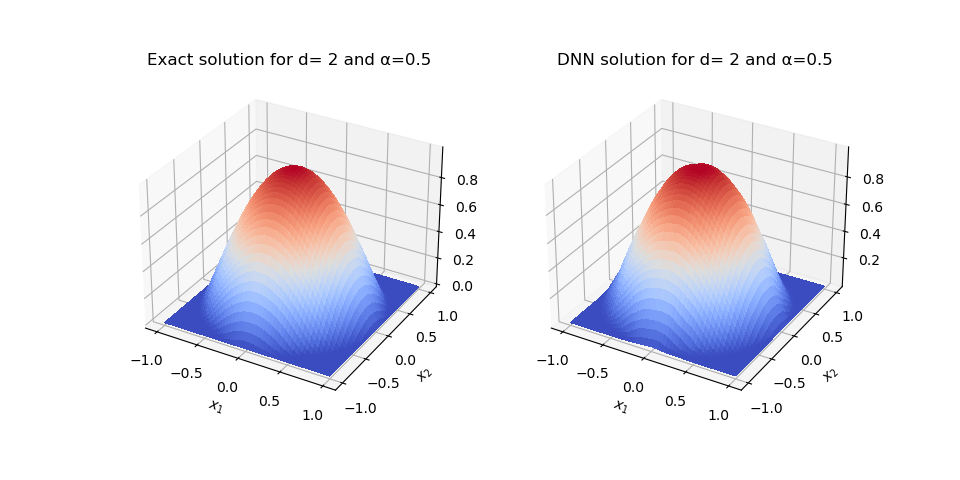}
	\caption{$\alpha = 0.5$}
	\label{fig:ex33Dd2a05}
\end{subfigure}
\begin{subfigure}[b]{0.4\textwidth}
	\centering
	\includegraphics[width=\textwidth]{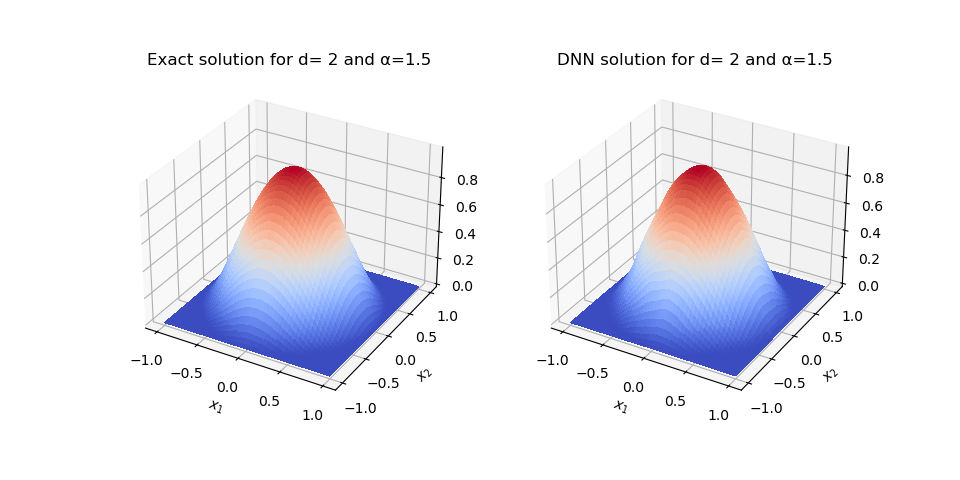}
	\caption{$\alpha = 1.5$}
	\label{fig:ex33Dd2a15}
\end{subfigure}
\begin{subfigure}[b]{0.4\textwidth}
	\centering
	\includegraphics[width=\textwidth]{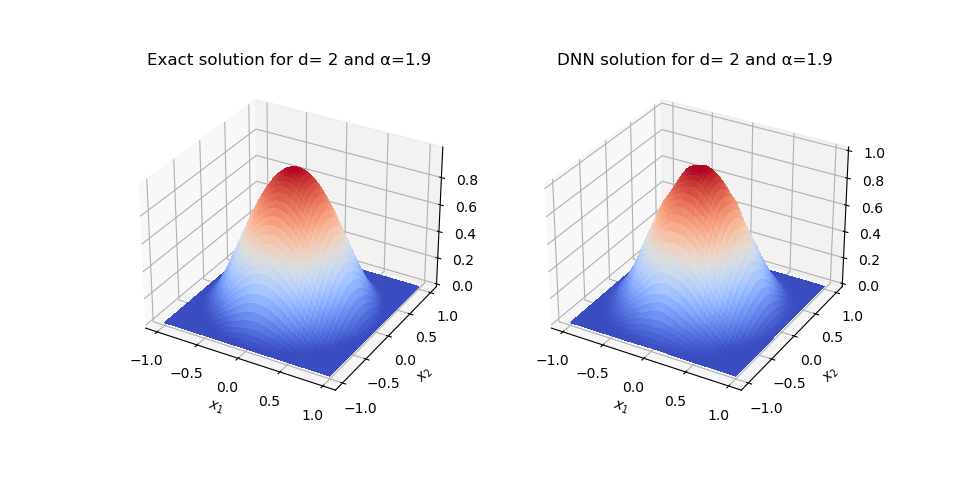}
	\caption{$\alpha = 1.9$}
	\label{fig:ex33Dd2a19}
\end{subfigure}
\caption{Three dimensional comparison of DNN and exact solution for $d=2$. In each Figure, left: exact solution, right: DNN solution.}
\label{fig:ex3-3D-d2}
\end{figure}

{\color{black}\medskip
As in Example 1, the optimal DNN fits pretty well the solution \eqref{eq:sol_ex2} when $d=2$, for all studied cases of $\alpha$, but unlike in previous example, in this case there are no problems near the boundary and there are not major visible differences between the optimal DNN and the solution \eqref{eq:sol_ex2}. On the other hand, Figure \ref{fig:ex3-3D-d2} shows that the optimal DNN has the same order and the same form of the analytical solution.}

\medskip
For $d=2$ we compare the elapsed time for different choices of $M$ and $P$. This was made for $\alpha = 0.5$ (Table \ref{tab:ex3a05}), $\alpha = 1.5$ (Table \ref{tab:ex3a15}) and for $\alpha = 1.9$ (Table \ref{tab:ex3a19}). In those tables we can see that the elapsed time increases when we increase the value of $M$ and $P$. For fixed values of $M$ and $P$ the elapsed time also increases when we increases the value of $\alpha$, but the major gap is for larger values of $M$ and $P$.

\begin{table}[h]
    \begin{subtable}[h]{0.45\textwidth}
        \centering
	\begin{tabular}{|l|l|l|l|l|}
		\hline
		M\textbackslash{}P & 10    & 100   & 1000   & 2000   \\ \hline
		10                 & 3.75  & 3.77  & 5.40   & 6.66  \\ \hline
		100                & 3.71  & 4.35  & 10.39  & 16.61  \\ \hline
		1000               & 4.47  & 9.28 & 60.81 & 116.09 \\ \hline
		2000               & 4.71 & 14.99 & 120.82 & 227.46 \\ \hline
	\end{tabular}
       \caption{$\alpha = 0.5$.}
       \label{tab:ex3a05}
    \end{subtable}
    \hfill
    \begin{subtable}[h]{0.45\textwidth}
        \centering
\begin{tabular}{|l|l|l|l|l|}
	\hline
	M\textbackslash{}P & 10     & 100    & 1000    & 2000    \\ \hline
	10                 & 3.67   & 3.80   & 5.83   & 7.66   \\ \hline
	100                & 3.77   & 4.86  & 15.66   & 26.71   \\ \hline
	1000               & 5.02  & 14.30  &  113.68 & 219.76  \\ \hline
	2000               & 4.85  & 25.93  & 223.80 & 431.75 \\ \hline
\end{tabular}
        \caption{$\alpha = 1.5$.}
        \label{tab:ex3a15}
     \end{subtable}
     
     \medskip
     \begin{subtable}[h]{0.45\textwidth}
        \centering
\begin{tabular}{|l|l|l|l|l|}
\hline
M\textbackslash{}P & 10     & 100      & 1000       & 2000      \\ \hline
10                 & 3.67   & 4.08     & 8.53      & 12.94     \\ \hline
100                & 4.04   & 6.95    & 42.65     & 80.22    \\ \hline
1000               & 7.36  & 40.62   & 384.95    & 760.63   \\ \hline
2000               & 13.30  & 77.12   & 755.87    & 1457.21   \\ \hline
\end{tabular}
       \caption{$\alpha = 1.9$.}
       \label{tab:ex3a19}
    \end{subtable}
     \caption{Example 2: Elapsed time in seconds of the SGD algorithm with $d=2$.}
     \label{tab:ex3}
\end{table}

\medskip
Finally we compare the mean square error and the mean {\color{black}relative} error in a grid of the Monte Carlo iterations $M$, for each $\alpha \in \{0.1,0.5,1,1.5,1.9\}$ and $P = 10,100,1000,2000$. The results are summarized in Figures \ref{fig:ex3-MSEP} and \ref{fig:ex3-AEP}.

\begin{figure}[H]
	\centering
	\begin{subfigure}[b]{0.4\textwidth}
		\centering
		\includegraphics[width=\textwidth]{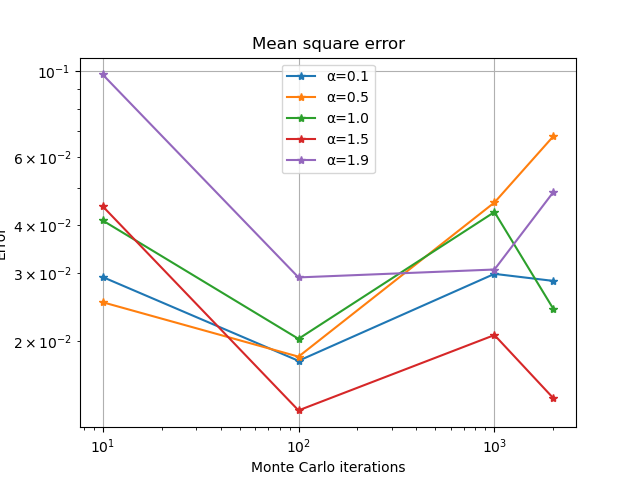}
		\caption{$P = 10$}
		\label{fig:ex3-MSEP10d2}
	\end{subfigure}
	\begin{subfigure}[b]{0.4\textwidth}
		\centering
		\includegraphics[width=\textwidth]{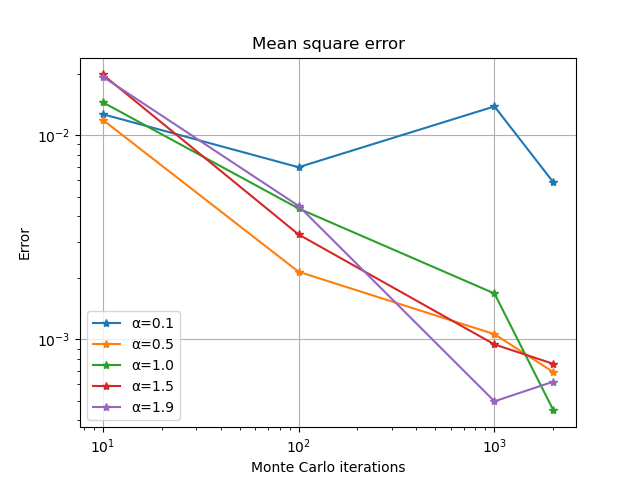}
		\caption{$P = 100$}
		\label{fig:ex3-MSEP100d2}
	\end{subfigure}
	\begin{subfigure}[b]{0.4\textwidth}
		\centering
		\includegraphics[width=\textwidth]{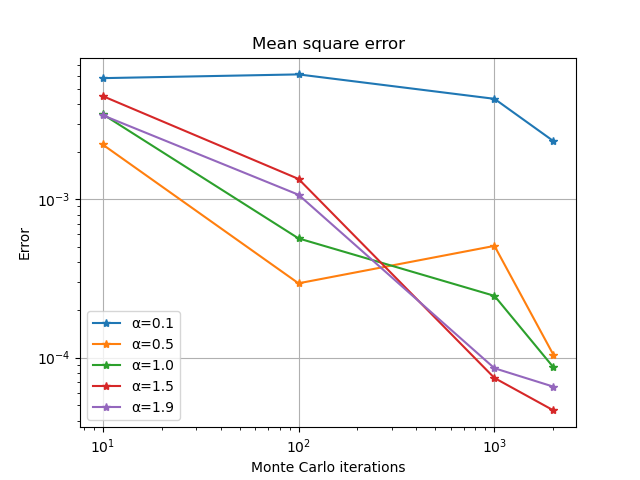}
		\caption{$P = 1000$}
		\label{fig:ex3-MSEP1000d2}
	\end{subfigure}
	\centering
	\begin{subfigure}[b]{0.4\textwidth}
		\centering
		\includegraphics[width=\textwidth]{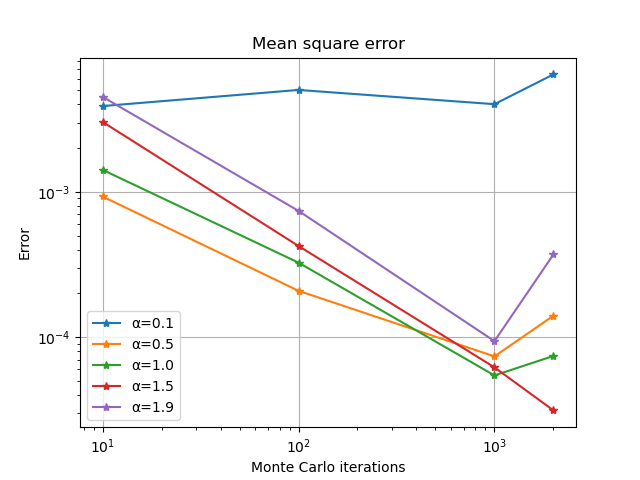}
		\caption{$P = 2000$}
		\label{fig:ex3-MSEP2000d2}
	\end{subfigure}
	\caption{Mean square error against number of Monte Carlo iterations $M$ for 5 values of $\alpha$.}
	\label{fig:ex3-MSEP}
\end{figure}

\medskip
\begin{figure}[H]
	\centering
	\begin{subfigure}[b]{0.4\textwidth}
		\centering
		\includegraphics[width=\textwidth]{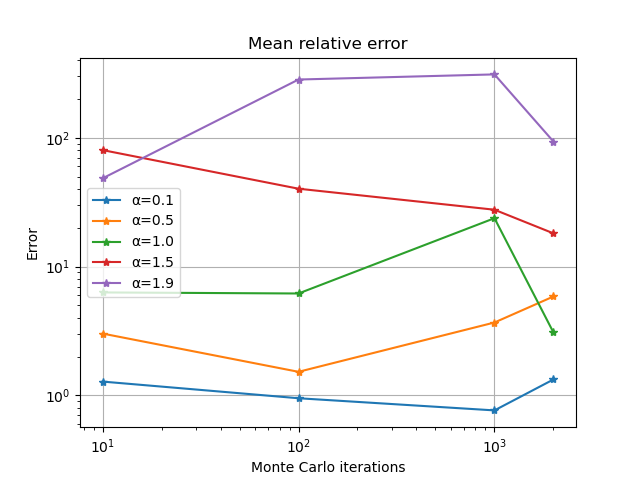}
		\caption{$P = 10$}
		\label{fig:ex3-AEP10d2}
	\end{subfigure}
	\begin{subfigure}[b]{0.4\textwidth}
		\centering
		\includegraphics[width=\textwidth]{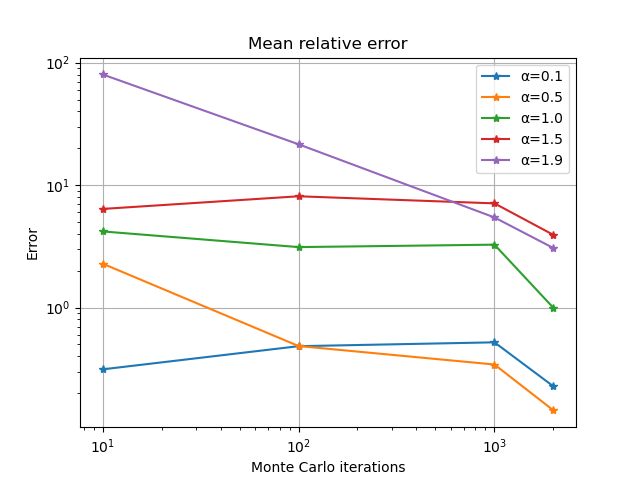}
		\caption{$P = 100$}
		\label{fig:ex3-AEP100d2}
	\end{subfigure}
	\hfill
	\begin{subfigure}[b]{0.4\textwidth}
		\centering
		\includegraphics[width=\textwidth]{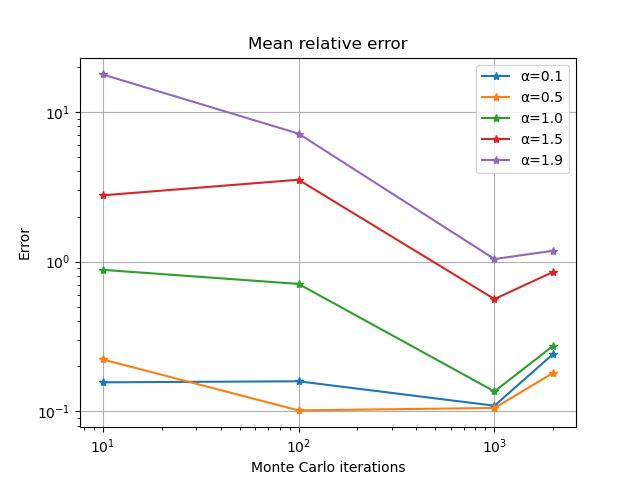}
		\caption{$P = 1000$}
		\label{fig:ex3-AEP1000d2}
	\end{subfigure}
	\centering
	\begin{subfigure}[b]{0.4\textwidth}
		\centering
		\includegraphics[width=\textwidth]{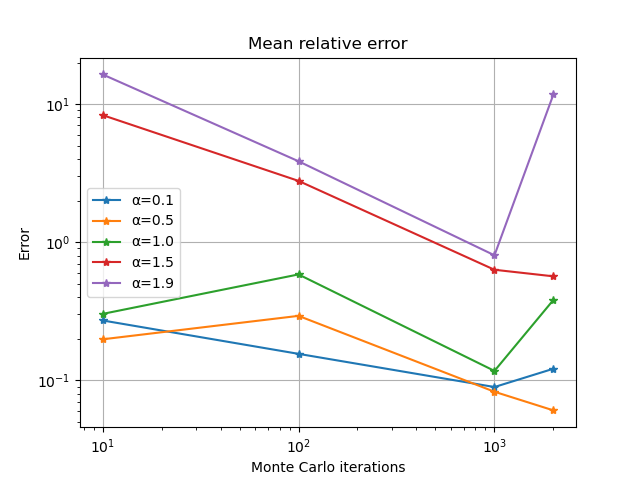}
		\caption{$P = 2000$}
		\label{fig:ex3-AEP2000d2}
	\end{subfigure}
	\caption{Mean {\color{black}relative} error against number of Monte Carlo iterations $M$ for 5 values of $\alpha$.}
	\label{fig:ex3-AEP}
\end{figure}

{\color{black}\medskip
Figure \ref{fig:ex3-MSEP} shows that the mean square error decreases when we increases $P$ and $M$, achieving the order $10^{-4}$  for the cases $P=1000$ and $P=2000$ when $M\geq 100$ and for any value of $\alpha$. In Figure \ref{fig:ex3-AEP} we see a pretty similar behavior of the mean relative error than the MSE, but with different order:  for $P=1000$ and $P=2000$, we have an error of order $10^{-1}$ only for small values of $\alpha$, $\alpha\leq 1$, and for $\alpha >1$, only in the case $M=1000$. For the other cases, the MRE is greater than 1. In other words, the mean square error is small for all the studied values for $\alpha$, but only for small values of $\alpha$ we reach a small value on the mean relative error.
}

\medskip
Now we set $d=5$. We will compare the optimal DNN and the solution \eqref{eq:sol_ex2} in 2D (Figure \ref{fig:ex3-2D-d5}) and in 3D (Figure \ref{fig:ex3-3D-d5}) for $\alpha=0.5,1.5,1.9$. This will be done in the same way as in Example 1. In both Figures we can see a good approximation of the solution \eqref{eq:sol_ex2} with the optimal DNN obtained with the SGD algorithm. In the three settings of $\alpha$ the most visible difference of the approximation is near the center of the domain, i.e, near 0, {\color{black} and near the boundary $\partial B(0,1)$}, specially in the cases $\alpha = 1.5$ and $\alpha = 1.9$. {\color{black}Moreover, Figure \ref{fig:ex3-3D-d5} shows that the form of the optimal DNN is the same of the solution \eqref{eq:sol_ex2}.}


\begin{figure}[H]
	\centering
	\begin{subfigure}[b]{0.4\textwidth}
		\centering
		\includegraphics[width=\textwidth]{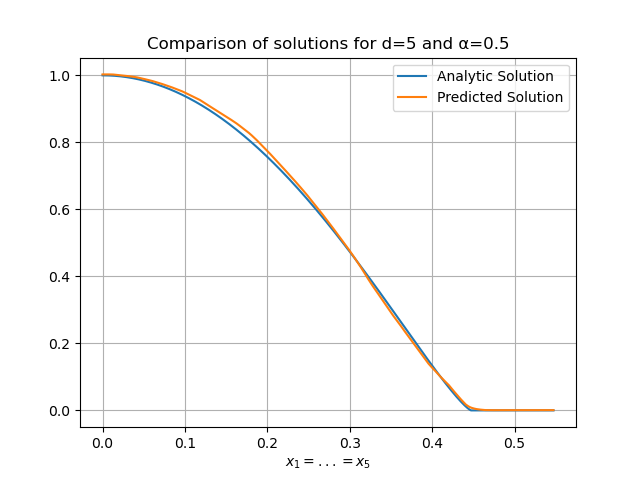}
		\caption{$\alpha = 0.5$}
		\label{fig:ex3d5a05}
	\end{subfigure}
	\begin{subfigure}[b]{0.4\textwidth}
		\centering
		\includegraphics[width=\textwidth]{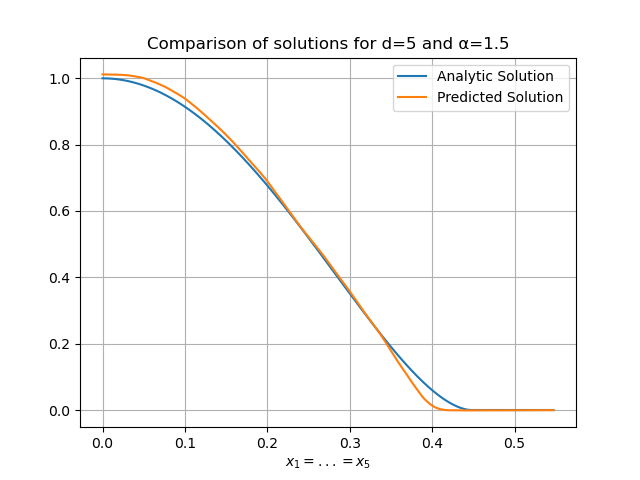}
		\caption{$\alpha = 1.5$}
		\label{fig:ex3d5a15}
	\end{subfigure}
	\begin{subfigure}[b]{0.4\textwidth}
		\centering
		\includegraphics[width=\textwidth]{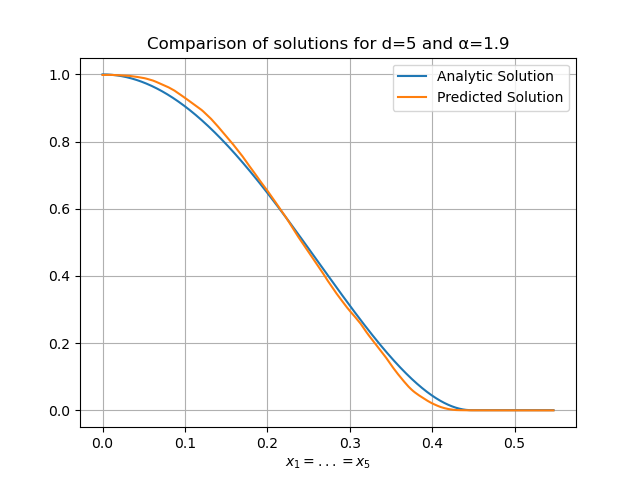}
		\caption{$\alpha = 1.9$}
		\label{fig:ex3d5a19}
	\end{subfigure}
	\caption{Two dimensional comparison of DNN and exact solution when $d=5$.}
\label{fig:ex3-2D-d5}
\end{figure} 

\begin{figure}[H]
\centering
\begin{subfigure}[b]{0.4\textwidth}
	\centering
	\includegraphics[width=\textwidth]{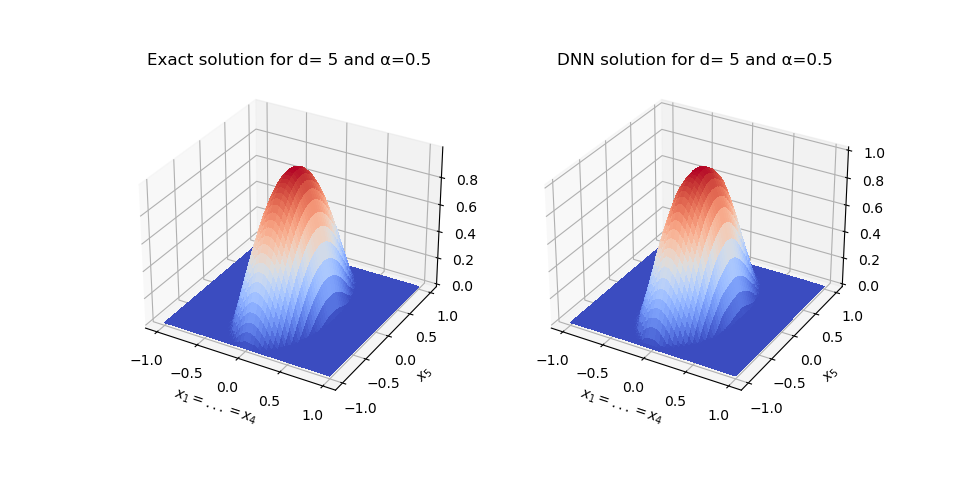}
	\caption{$\alpha = 0.5$}
	\label{fig:ex33Dd5a05}
\end{subfigure}
\begin{subfigure}[b]{0.4\textwidth}
	\centering
	\includegraphics[width=\textwidth]{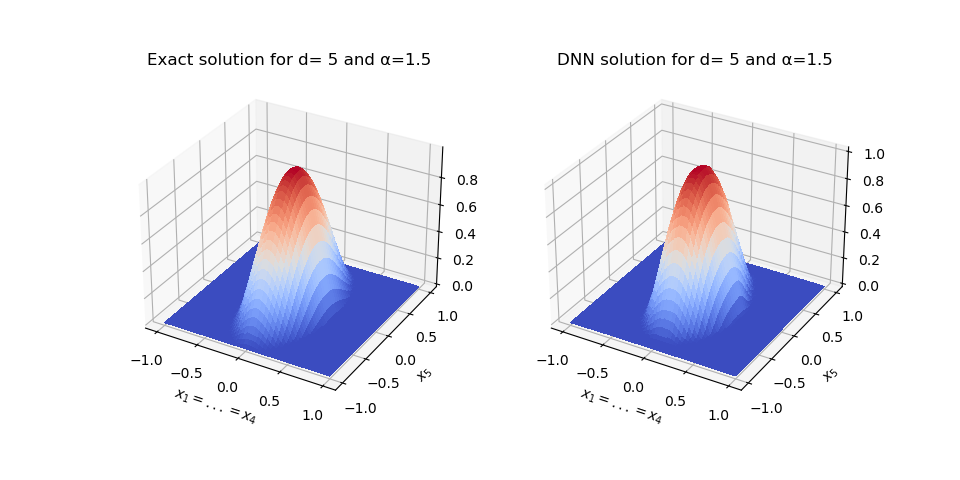}
	\caption{$\alpha = 1.5$}
	\label{fig:ex33Dd5a10}
\end{subfigure}
\begin{subfigure}[b]{0.4\textwidth}
	\centering
	\includegraphics[width=\textwidth]{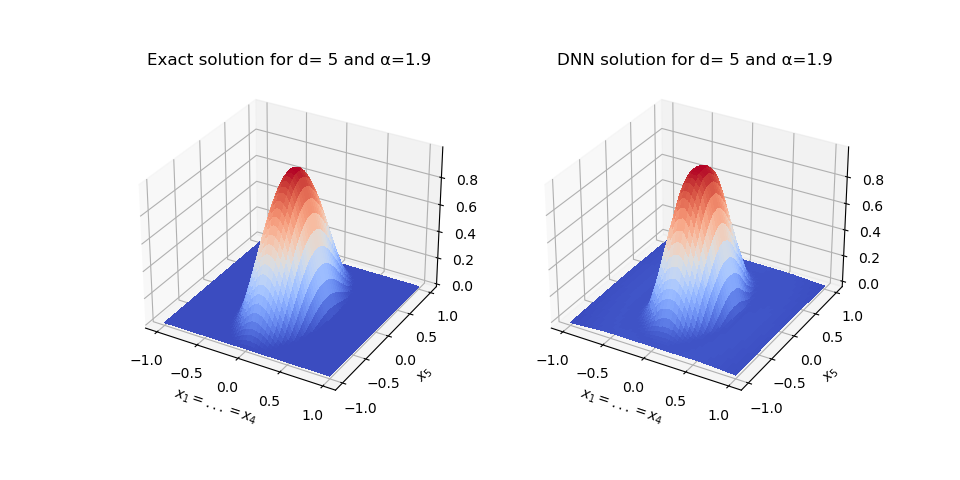}
	\caption{$\alpha = 1.9$}
	\label{fig:ex33Dd5a19}
\end{subfigure}
\caption{Three dimensional comparison of DNN and exact solution for $d=5$. In each Figure, left: exact solution, right: DNN solution.}
\label{fig:ex3-3D-d5}
\end{figure}

\subsection{Example 3: Non zero boundary term}

 Let $d \geq 2$ and recall that $D = B(0,1)$. For $y \in D^c$ fixed let the boundary condition $g$ {\color{black}be} a translation of the fundamental solution of the fractional Laplacian, that is,
\[
g(x) = \frac{\Gamma\left(\frac d2 - \frac{\alpha}2\right)}{2^{\alpha} \pi^{\frac d2}\Gamma\left(\frac{\alpha}2\right)} |x-y|^{\alpha - d}.
\]
Now consider the problem
\begin{equation}\label{eq:ex3}
\centering
\begin{cases}
	(-\Delta)^{\frac{\alpha}2} u(x) &= 0 \hfill x \in B(0,1),\\
	\hfill u(x) &= g(x) \qquad x \notin B(0,1).
\end{cases}
\end{equation}
From the choice of $y$ outside $B(0,1)$, one obtains that {\color{black}one} exact solution for previous problem is $u(x) = g(x)$ for every $x \in \R^d$. In this particular case, we consider the vector $y = (2,0,...,0) \in \R^d$.

\medskip
For this example, we will set the values $M=300$, $P=1000$, $L=200$, $\gamma = 5 \times 10^{-3}$ and $N_{Iter}=1000$. First we study the algorithm in dimension 2. Figure \ref{fig:ex4-2D-d2} shows the comparison of the optimal DNN and the solution of problem \eqref{eq:ex3} for $\alpha = 0.5,1.5,1.9$ in a grid made in the same way as in previous examples. Figure \ref{fig:ex4-3D-d2} shows the comparison in 3D.

\medskip
From Figure \ref{fig:ex4-2D-d2} we can see a poorly approximation of the optimal DNN when $\alpha = 0.5$. The approximation is improved by increasing $\alpha$. By de order of the solution of \eqref{eq:ex3}, we see that the optimal DNN still preserves the form of the analytic solution in the square $[-1,1]\times[-1,1]$.

\begin{figure}[H]
	\centering
	\begin{subfigure}[b]{0.4\textwidth}
		\centering
		\includegraphics[width=\textwidth]{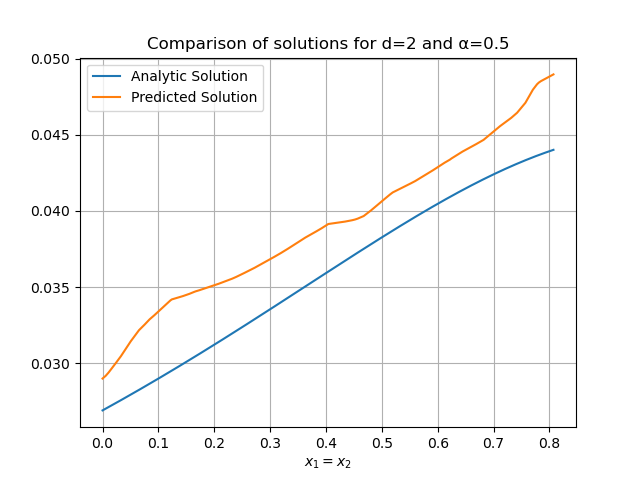}
		\caption{$\alpha = 0.5$}
		\label{fig:ex4d2a05}
	\end{subfigure}
	\begin{subfigure}[b]{0.4\textwidth}
		\centering
		\includegraphics[width=\textwidth]{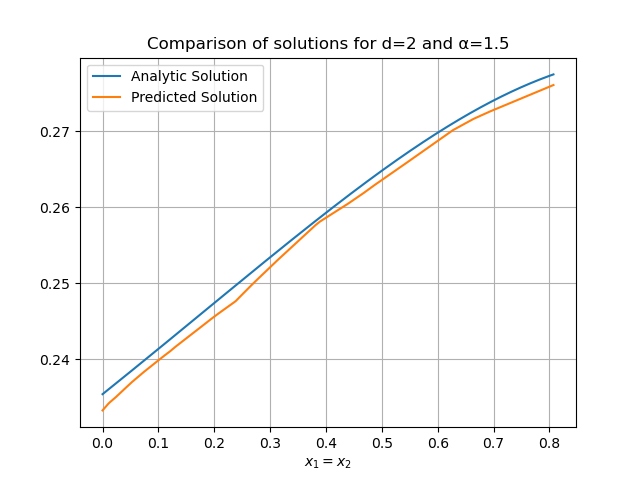}
		\caption{$\alpha = 1.5$}
		\label{fig:ex4d2a15}
	\end{subfigure}
	\begin{subfigure}[b]{0.4\textwidth}
		\centering
		\includegraphics[width=\textwidth]{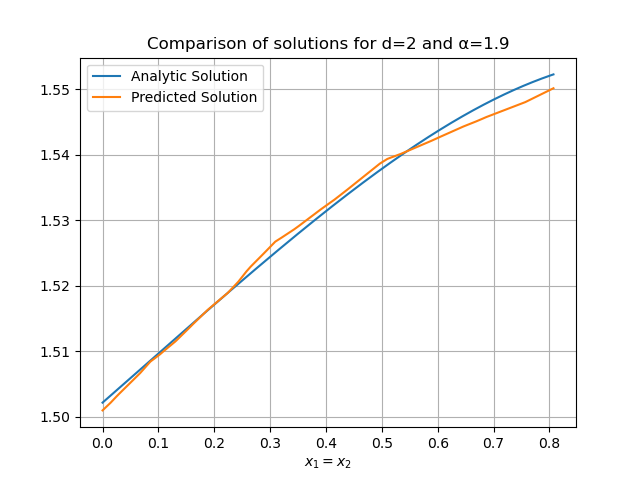}
		\caption{$\alpha = 1.9$}
		\label{fig:ex4d2a19}
	\end{subfigure}
	\caption{Two dimensional comparison of DNN and exact solution when $d=2$.}
\label{fig:ex4-2D-d2}
\end{figure}

\begin{figure}[H]
\centering
\begin{subfigure}[b]{0.4\textwidth}
	\centering
	\includegraphics[width=\textwidth]{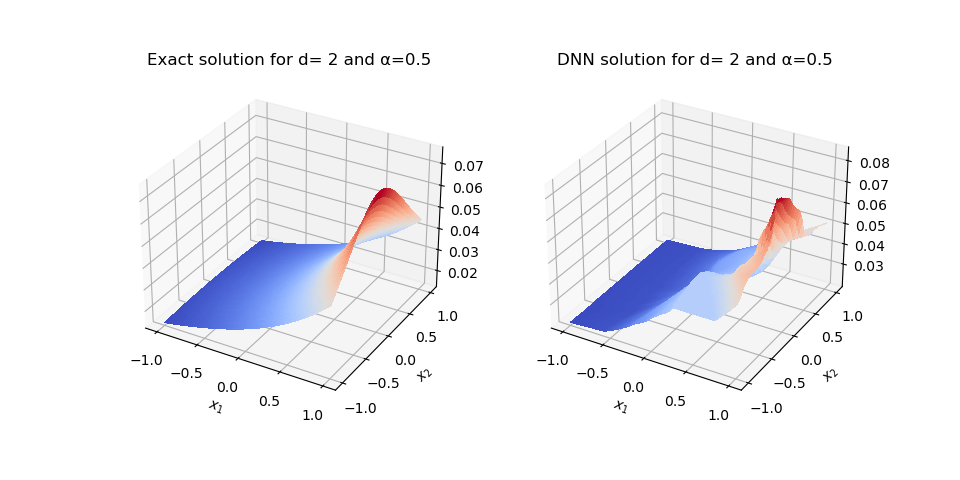}
	\caption{$\alpha = 0.5$}
	\label{fig:ex43Dd2a05}
\end{subfigure}
\begin{subfigure}[b]{0.4\textwidth}
	\centering
	\includegraphics[width=\textwidth]{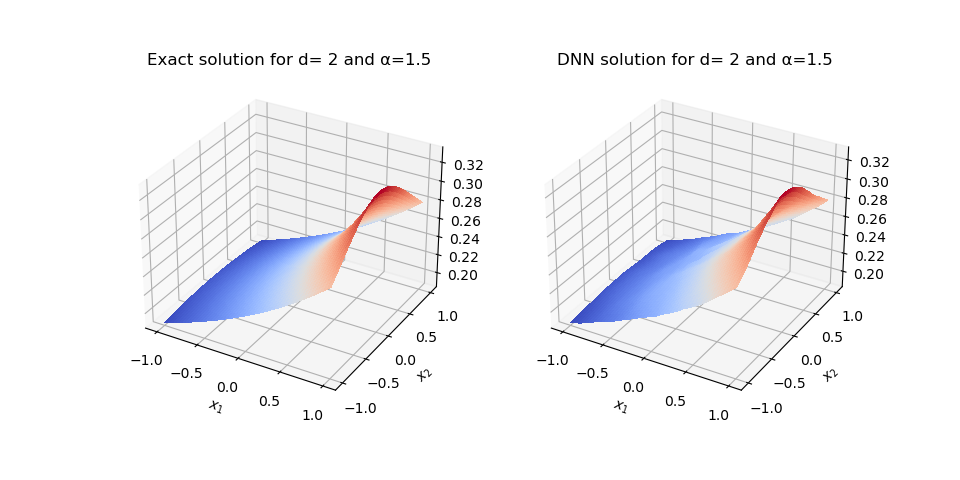}
	\caption{$\alpha = 1.5$}
	\label{fig:ex43Dd2a15}
\end{subfigure}
\begin{subfigure}[b]{0.4\textwidth}
	\centering
	\includegraphics[width=\textwidth]{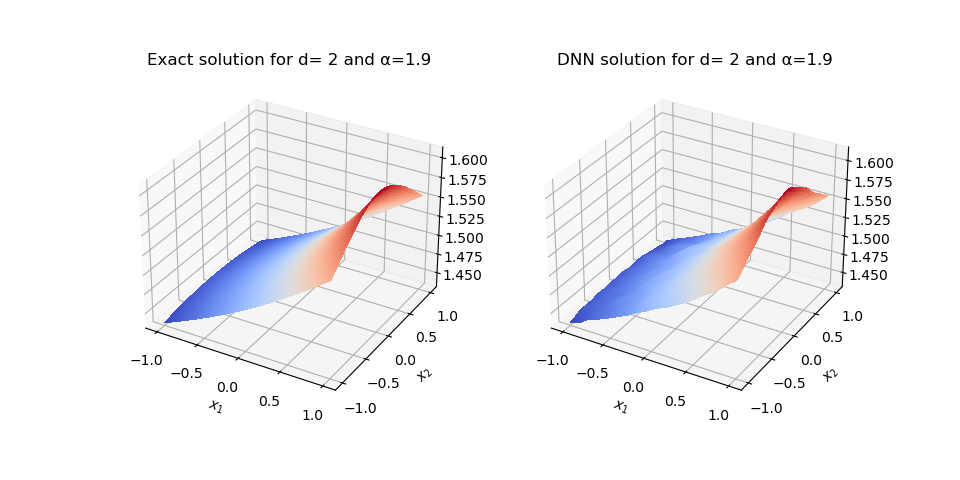}
	\caption{$\alpha = 1.9$}
	\label{fig:ex43Dd2a19}
\end{subfigure}
\caption{Three dimensional comparison of DNN and exact solution for $d=2$. In each Figure, left: exact solution, right: DNN solution.}
\label{fig:ex4-3D-d2}
\end{figure}

Now we study the elapsed time by changing the values of $M$ and $P$ for $\alpha = 0.5,1.5,1.9$. The elapsed times are shown in Tables \ref{tab:ex4a05}, \ref{tab:ex4a15} and \ref{tab:ex4a19}.  

\begin{table}[h]
    \begin{subtable}[h]{0.45\textwidth}
        \centering
	\begin{tabular}{|l|l|l|l|l|}
		\hline
		M\textbackslash{}P & 10     & 100   & 1000    & 2000   \\ \hline
		10                 & 3.64   & 3.78  & 5.36   & 6.54  \\ \hline
		100                & 3.71   & 4.30  & 9.99   & 15.98  \\ \hline
		1000               & 4.31  & 9.21 & 58.56  & 113.46 \\ \hline
		2000               & 4.98  & 15.64 & 111.57  & 219.05 \\ \hline
	\end{tabular}
       \caption{$\alpha = 0.5$.}
       \label{tab:ex4a05}
    \end{subtable}
    \hfill
    \begin{subtable}[h]{0.45\textwidth}
        \centering
	\begin{tabular}{|l|l|l|l|l|}
		\hline
		M\textbackslash{}P & 10     & 100    & 1000    & 2000   \\ \hline
		10                 & 3.71   & 3.83   & 5.84   & 7.62  \\ \hline
		100                & 3.76   & 4.84  & 15.27   & 26.80  \\ \hline
		1000               & 4.61  & 15.62  & 115.40  & 216.40 \\ \hline
		2000               & 5.50  & 26.18  & 216.27  & 423.64 \\ \hline
	\end{tabular}
        \caption{$\alpha = 1.5$.}
        \label{tab:ex4a15}
     \end{subtable}
     
     \medskip
     \begin{subtable}[h]{0.45\textwidth}
        \centering
	\begin{tabular}{|l|l|l|l|l|}
		\hline
		M\textbackslash{}P & 10     & 100     & 1000     & 2000    \\ \hline
		10                 & 3,69   & 4.06    & 8.52    & 12.94   \\ \hline
		100                & 4.02   & 7.49   & 41.39   & 81.52  \\ \hline
		1000               & 6.87  & 41.88  & 387.11  & 768.78 \\ \hline
		2000               & 9.99  & 70.84  & 750.55  & 1486.86 \\ \hline
	\end{tabular}
       \caption{$\alpha = 1.9$.}
       \label{tab:ex4a19}
    \end{subtable}
     \caption{Example 3: Elapsed time in seconds of the SGD algorithm with $d=2$.}
     \label{tab:ex4}
\end{table}

From those values of $M$ and $P$, and for $\alpha = 0.1,0.5,1,1.5,1.9$ we calculate the mean square error and the mean {\color{black}relative} error of the optimal DNN. These errors are in Figure \ref{fig:ex4-MSEP} and \ref{fig:ex4-AEP}.

\medskip
Figure \ref{fig:ex4-MSEP} shows a small mean square error in all settings of $M$, $P$ and $\alpha$, but this small error is a bit tricky, because for small values of $\alpha$, the order of the solution is also small.

\medskip
The errors shown in Figure \ref{fig:ex4-AEP} are very different of errors in Figure \ref{fig:ex4-MSEP}. Figure \ref{fig:ex4-AEP} shows an error near of (or lower than) the order $10^{0}$ for the cases $\alpha=0.1,0.5$. When $\alpha=1$, for larger values of $M$ and $P$ we have an error lower than the order $10^{-2}$, but in the other cases, the error is also near of the order $10^{-1}$. The lowest errors are in the cases of $\alpha = 1.5$ and $\alpha = 1.9$. The error is of the order $10^{-3}$ for large values of $M$ and $P$, and near of the order $10^{-2}$ for small values of $M$ and $P$.

\medskip
\begin{figure}[H]
	\centering
	\begin{subfigure}[b]{0.4\textwidth}
		\centering
		\includegraphics[width=\textwidth]{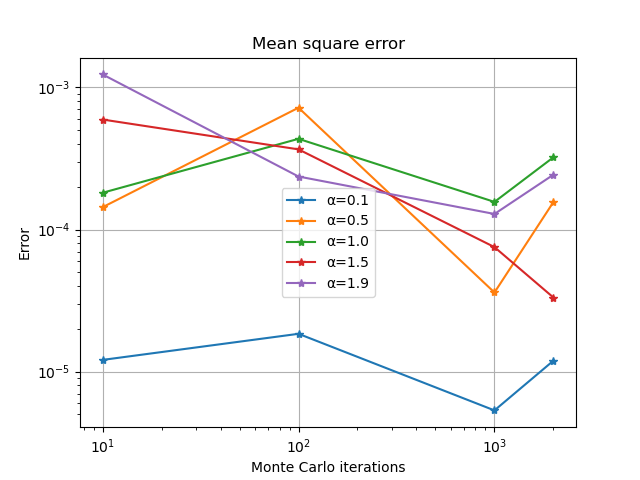}
		\caption{$P = 10$}
		\label{fig:ex4-MSEP10d2}
	\end{subfigure}
	\begin{subfigure}[b]{0.4\textwidth}
		\centering
		\includegraphics[width=\textwidth]{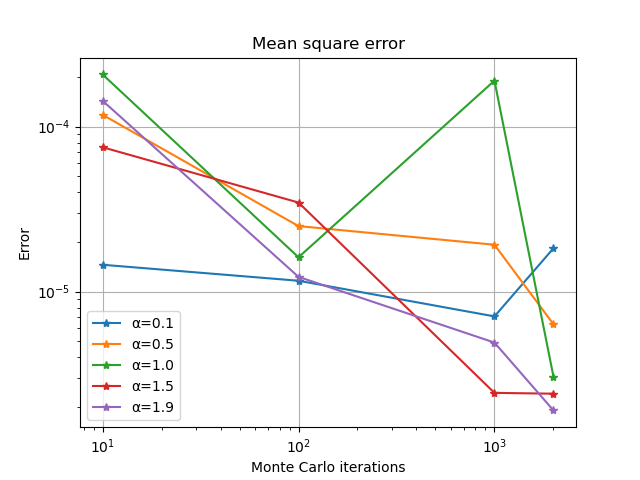}
		\caption{$P = 100$}
		\label{fig:ex4-MSEP100d2}
	\end{subfigure}
	\begin{subfigure}[b]{0.4\textwidth}
		\centering
		\includegraphics[width=\textwidth]{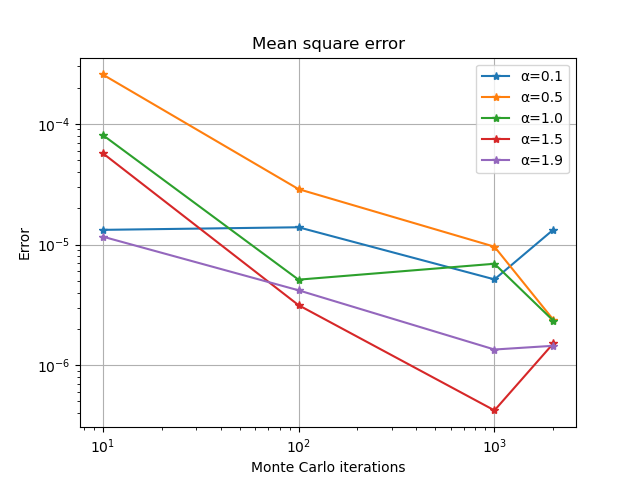}
		\caption{$P = 1000$}
		\label{fig:ex4-MSEP1000d2}
	\end{subfigure}
	\centering
	\begin{subfigure}[b]{0.4\textwidth}
		\centering
		\includegraphics[width=\textwidth]{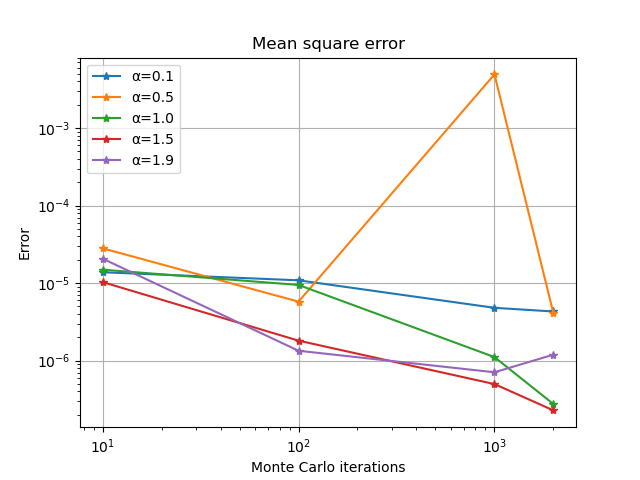}
		\caption{$P = 2000$}
		\label{fig:ex4-MSEP2000d2}
	\end{subfigure}
	\caption{Mean square error against number of Monte Carlo iterations $M$ for 5 values of $\alpha$.}
	\label{fig:ex4-MSEP}
\end{figure}

\medskip
\begin{figure}[H]
	\centering
	\begin{subfigure}[b]{0.4\textwidth}
		\centering
		\includegraphics[width=\textwidth]{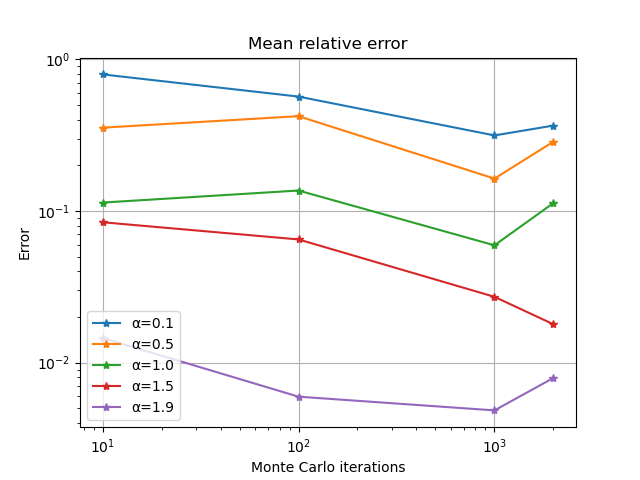}
		\caption{$P = 10$}
		\label{fig:ex4-AEP10d2}
	\end{subfigure}
	\begin{subfigure}[b]{0.4\textwidth}
		\centering
		\includegraphics[width=\textwidth]{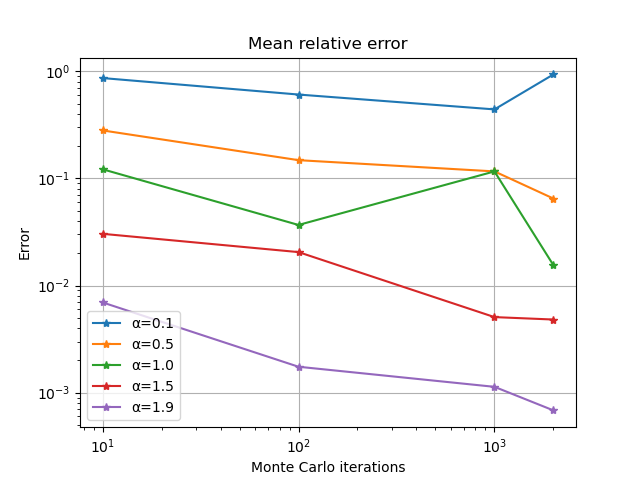}
		\caption{$P = 100$}
		\label{fig:ex4-AEP100d2}
	\end{subfigure}
	\hfill
	\begin{subfigure}[b]{0.4\textwidth}
		\centering
		\includegraphics[width=\textwidth]{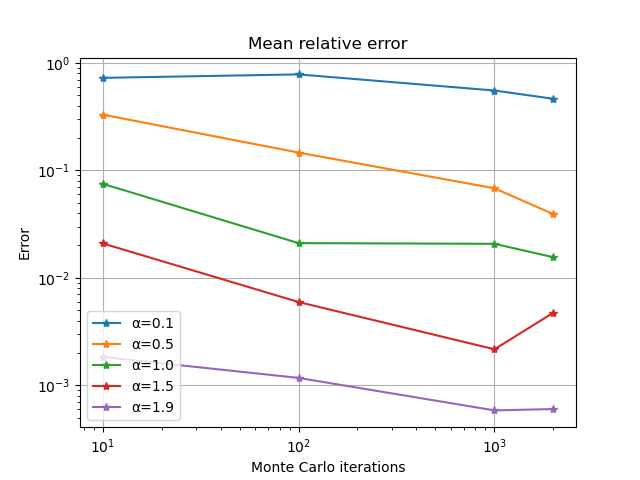}
		\caption{$P = 1000$}
		\label{fig:ex4-AEP1000d2}
	\end{subfigure}
	\centering
	\begin{subfigure}[b]{0.4\textwidth}
		\centering
		\includegraphics[width=\textwidth]{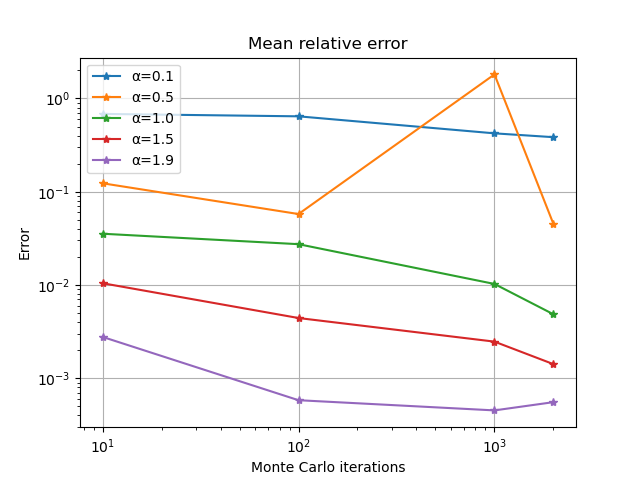}
		\caption{$P = 2000$}
		\label{fig:ex4-AEP2000d2}
	\end{subfigure}
	\caption{Mean {\color{black}relative} error against number of Monte Carlo iterations $M$ for 5 values of $\alpha$.}
	\label{fig:ex4-AEP}
\end{figure}
\subsection{Example 4: A counter example for the algorithm}
Let $d\geq 2$ and $D = B(0,1).$ Consider the following problem
\begin{equation}\label{eq:sol_ex4}
	\centering
	\begin{cases}
		(-\Delta)^{\frac{\alpha}2} u(x) &= 0 \hfill x \in B(0,1),\\
		\hfill u(x) &= \displaystyle \sum_{i=1}^{d} x_i \qquad x \notin B(0,1),
	\end{cases}
\end{equation}
Property 2 in \cite{Ex4} ensures that every harmonic function must be $\alpha$-harmonic for all $\alpha \in (0,2)$. In particular, the function
\[
g(x) = \sum_{i=1}^{d} x_i, \qquad x \notin B(0,1),
\]
is harmonic, and then it is $\alpha$-harmonic. Therefore, for all $\alpha \in (0,2)$, the solution of problem \eqref{eq:sol_ex4} is  
\[
u(x) = \sum_{i=1}^{d} x_i, \qquad x \in \R^d.
\]
For this Example we only consider dimension 2. The performances of the algorithm are similar for different choices of $d$. Figure \ref{fig:ex2-2D-d2} shows a comparison between the optimal DNN and the solution of problem \eqref{eq:sol_ex4} in two dimensions. Figure \ref{fig:ex2-3D-d2} shows the same comparison, but in three dimensions.

\begin{figure}[H]
	\centering
	\begin{subfigure}[b]{0.4\textwidth}
		\centering
		\includegraphics[width=\textwidth]{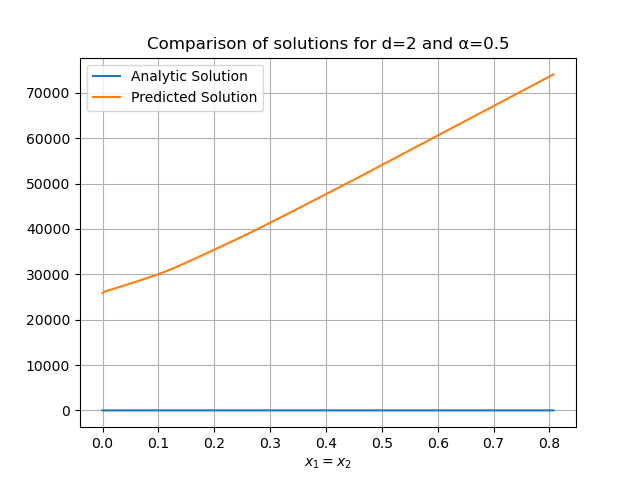}
		\caption{$\alpha = 0.5$}
		\label{fig:ex2d2a05}
	\end{subfigure}
	\begin{subfigure}[b]{0.4\textwidth}
		\centering
		\includegraphics[width=\textwidth]{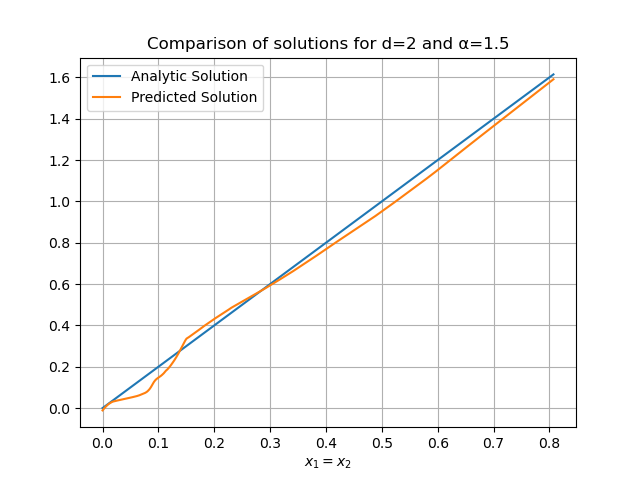}
		\caption{$\alpha = 1.5$}
		\label{fig:ex2d2a15}
	\end{subfigure}
	\begin{subfigure}[b]{0.4\textwidth}
		\centering
		\includegraphics[width=\textwidth]{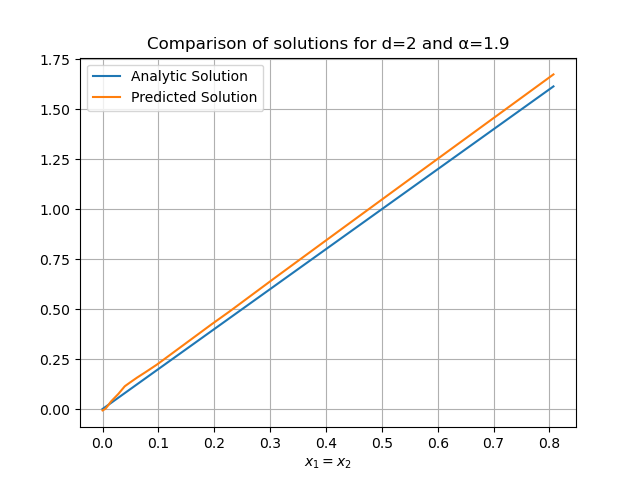}
		\caption{$\alpha = 1.9$}
		\label{fig:ex2d2a19}
	\end{subfigure}
	\caption{Two dimensional comparison of DNN and exact solution when $d=2$.}
\label{fig:ex2-2D-d2}
\end{figure}

\begin{figure}[H]
\centering
\begin{subfigure}[b]{0.4\textwidth}
	\centering
	\includegraphics[width=\textwidth]{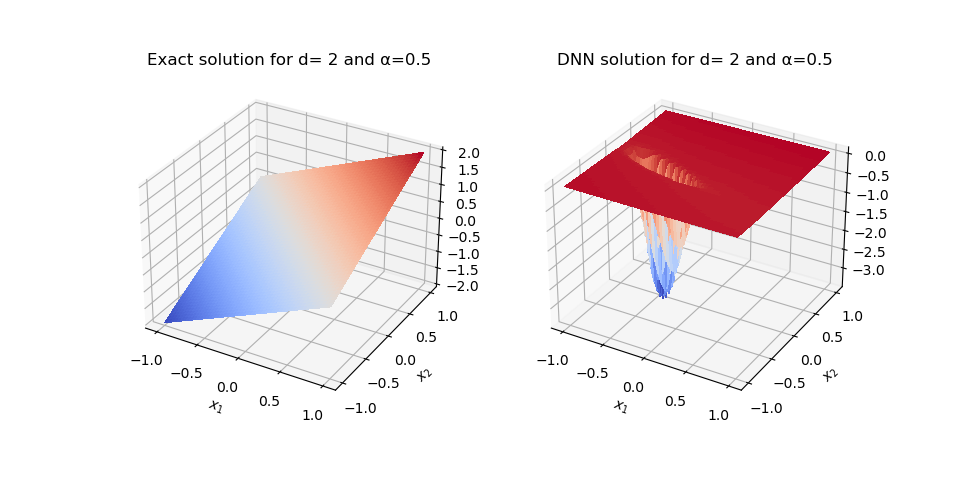}
	\caption{$\alpha = 0.5$}
	\label{fig:ex23Dd2a05}
\end{subfigure}
\begin{subfigure}[b]{0.4\textwidth}
	\centering
	\includegraphics[width=\textwidth]{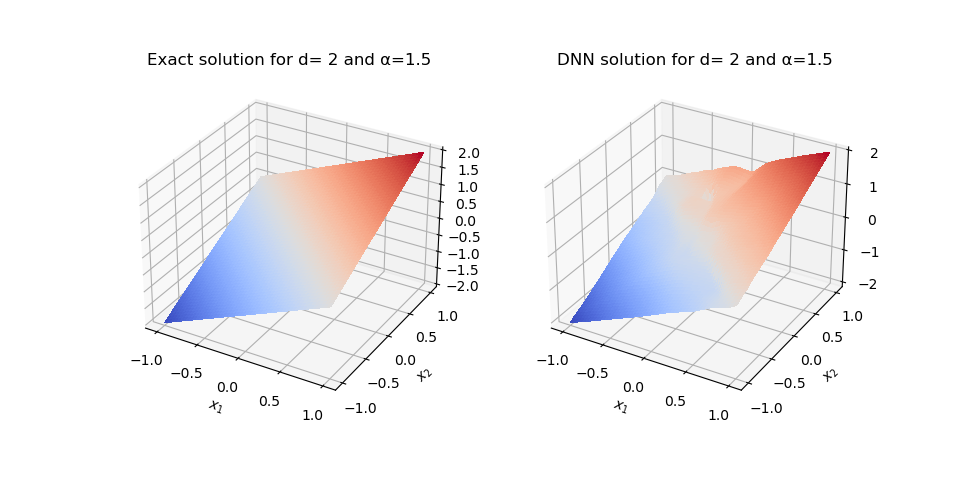}
	\caption{$\alpha = 1.5$}
	\label{fig:ex23Dd2a15}
\end{subfigure}
\begin{subfigure}[b]{0.4\textwidth}
	\centering
	\includegraphics[width=\textwidth]{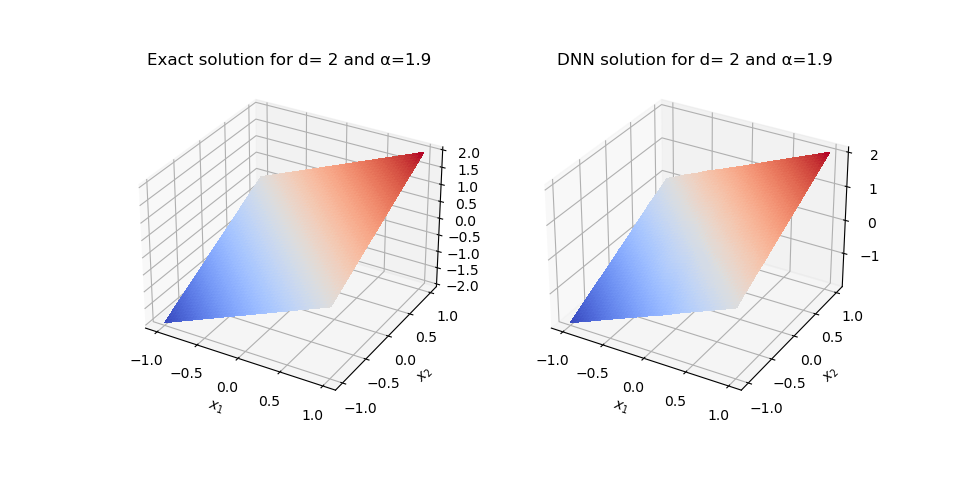}
	\caption{$\alpha = 1.9$}
	\label{fig:ex23Dd2a19}
\end{subfigure}
\caption{Three dimensional comparison of DNN and exact solution for $d=2$. In each Figure, left: exact solution, right: DNN solution.}
\label{fig:ex2-3D-d2}
\end{figure}
\medskip
We can notice from Figures \ref{fig:ex2-2D-d2} and \ref{fig:ex2-3D-d2} that for almost all the settings of $\alpha$, the algorithm does not perform a good approximation of the solution of problem \eqref{eq:sol_ex4}. We only have a well approximation when $\alpha$ is near 2.

\section{Conclusion and discussion}\label{Sec:5}

In this section we talk about conclusions and discussions about {\color{black}our proposed} algorithm and its behavior. 

\medskip
First of all we will highlight the behavior of the WoS process. For small values of $\alpha$, the isotropic $\alpha$-stable process exits far from the boundary $\partial B(0,1)$. This implies that the WoS process exits the set $D$ in a few steps, that means, the value of $N$ is also small. 

\medskip
On the other hand, when $\alpha$ is near 2 (i.e. $\alpha$ is large), the isotropic $\alpha$-stable process exits very close to the boundary $\partial B(0,1)$. This means that the WoS process needs more steps to exit the set $D$. In other words, the value of $N$ is large. This makes sense, because in the case $\alpha = 2$, one can see in \cite{Grohs,AK1} that the WoS process in this case never exits the set $D$, and then the value of $N$ is infinite a.s.

\medskip
In terms of time, we note that the {\color{black}previously} mentioned behavior of the WoS process {\color{black}has} a strong influence in the elapsed time of the SGD algorithm. Indeed, we can see in the different tables on each example that for the same values of $P$ and $M$, the algorithm takes more time for large values of $\alpha$ than for small ones.

\medskip
Notice also that in all the examples, there is a {\color{black}good} approximation for large values of $\alpha$. In the case of small values of $\alpha$, the approximation has some issues, specially in the examples where the boundary condition is non-zero. This problem is caused principally due the same behavior of the WoS pocess: If $\alpha$ is small, the last element in the WoS process, namely $\rho_N$, is a point far away the set $D$, and if the boundary condition is non-zero, the Monte Carlo iterations have the terms involving $g(\rho_N)$. This leads to many errors if a small number of Monte Carlo iterations are considered, that is our case. This {\color{black}has} not happened in \cite{AK1}, because the order of the Monte Carlo iterations in that paper is much higher ($2^{10}$ iterations) compared to the order considered in this paper (maximum 2000 iterations).   

\medskip
We want to emphasize the principal advantage of this numerical method. With this method we can estimate the value of the solution of the fractional PDE in every point of $\R^d$ once the deep neural network is trained, contrary to the case of Monte Carlo, where we need to do a different Monte Carlo realization for each point of $\R^d$, and that is an expensive computation. The same happens in the case of finite differences, where we know the value of the solution only on a grid, and the computational cost is larger the finer grid is. For this method, and in the examples presented, we only needed a small number of Monte Carlo iterations, for a given set of points on $\R^d$, with maximum size of $2000$.

\medskip

\end{document}